\newcommand{\R}{\mathbb{R}} 
\newcommand{\Dcal}{\mathcal{D}}
\newcommand{\Ecal}{\mathcal{E}}
\newcommand{\Hcal}{\mathcal{H}}
\newcommand{\ud}{\frac{1}{2}}
\newcommand{\sgn}{{\rm sgn}}
\newcommand{\rot}{{\rm curl}}
\newcommand{\di}{{\rm div}}
\newcommand{\pv}{\times}
\newcommand{\choc}{S}
\newcommand{\rare}{R}
\newcommand{\ki}{\chi}
\newcommand{\eps}{\tau}
\newcommand{\E}{{\bf P}}
\newcommand{\nn}{|E|^2}
\newcommand{\D}{{\bf D}}
\newcommand{\om}{\omega}
\newcommand{\ds}{\displaystyle}
\def \dx{\Delta x}
\def \dy{\Delta y}
\def \dt{\Delta t}
\def \ud{\frac{1}{2}}
\def \una{u^n_\alpha}
\def \unb{u^n_\beta}
\def \unpa{u^{n+1}_\alpha}
\def \f2{\vec{{F}}}
\def \nor{\vec{n}}
\newtheorem{theorem}{Theorem}[section]
\newtheorem{lemma}[theorem]{Lemma}
\newtheorem{prop}[theorem]{Proposition}
\theoremstyle{definition}
\newtheorem{definition}[theorem]{Definition}
\theoremstyle{remark}
\numberwithin{equation}{section}
\begin{document}
\title[\sc Godunov scheme for Kerr system.]{Godunov scheme for Maxwell's equations with Kerr nonlinearity.}
\author{Denise Aregba-Driollet}
\address{Institut de Math\'ematiques de Bordeaux, UMR 5251, 
351 cours de la lib\'eration, 33405 Talence Cedex, France. {\it aregba@math.u-bordeaux1.fr}} 
 \subjclass[2010]{Primary: 65M08, 35L65; Secondary: 35L67, 78-04}  
 \keywords{Godunov, Riemann problem, finite volumes, relaxation, Kerr model, Kerr-Debye model.}  
\date{\today}


\begin{abstract} 
We study the Godunov scheme for a nonlinear Maxwell model arising in nonlinear optics, the Kerr model. This is a hyperbolic system of conservation laws with some eigenvalues of variable multiplicity, neither genuinely nonlinear nor linearly degenerate. The solution of the Riemann problem for the full-vector $6 \times 6$ system is constructed and proved to exist for all data. This solution is compared to the one of the reduced Transverse Magnetic model. The scheme is implemented in one and two space dimensions. The results are very close to the ones obtained with a Kerr-Debye relaxation approximation.
\end{abstract}
\maketitle
\section{Introduction}
In nonlinear optics, the propagation of electromagnetic waves in a crystal can be modelized by the so-called Kerr and Kerr-Debye models. Denoting $E$ and $H$ the electric and magnetic fields, $D$ and $B$ the electric and
magnetic displacements, one writes the tridimensional Maxwell's equations
\[
\left\{
\begin{array}{lcl}
\partial_t D - \rot H = 0,\\
\partial_t B + \rot E =0,
\end{array}
\right.
\]
with $\di D=\di B=0$, and the constitutive relations
\[
\left\{
\begin{array}{lcl}
B&=&\mu_0 H \\
D&=&\epsilon_0 E +P
\end{array}
\right.
\]
where $P$ is the nonlinear polarization and $\mu_0$, $\epsilon_0$ are
the free space permeability and permittivity. 

If the medium exhibits an instantaneous response, then one can use a Kerr model \begin{equation}\label{PK}
P=P_K=\epsilon_0 \epsilon_r |E|^2 E,
\end{equation}
where $\epsilon_r$ is the relative permittivity. See for example \cite{shen} for further details. In that case, Maxwell's equations read as a $6 \times 6$ quasilinear system of conservation laws: 
\begin{equation}\label{Kerr3D}
\left\{
\begin{array}{lcl}
\partial_t D - \rot H = 0,\\
\partial_t H + \mu_0^{-1}\rot (\E(D)) =0
\end{array}
\right.
\end{equation}
where $\E$ is the reciprocal function of $\D$: 
\[ \D(E)=\epsilon_0(1+\epsilon_r |E|^2)E.\]
Denoting 
\begin{equation}\label{defq}
 q(e)=\epsilon_0(e+\epsilon_r e^3), \qquad e\in \R, \qquad p=q^{-1},
\end{equation}
we have 
\begin{equation}\label{defP}
E= \E (D)=\frac{D}{\epsilon_0 (1+\epsilon_r p^2(|D|))}\, , \quad |E|=p(|D|).
\end{equation}
If $(D,H)$ is solution of (\ref{Kerr3D}) then $\partial_t (\di D)=\partial_t (\di (\mu_0 H))=0$ so at the theoritical level the divergence conditions have to be satisfied for the initial data only.

To solve numerically Maxwell models involved in nonlinear optics, it is rather classical to use a Finite Difference Time-Domain (FDTD) method introduced by K.S. Yee \cite{yee}. In a related context, we refer the reader to the works by R. W. Ziolkowski {\it et al} \cite{ziol-jud}, \cite{ziol}, A. Bourgeade {\it et al} \cite{bour-fr}, \cite{bour-saut}, O. Saut \cite{saut}. Finite element methods can also be adapted, see \cite{thhuyn}. Finite volumes are used by A. de la Bourdonnaye with a third order Roe solver for a Kerr model \cite{de-la-B}, and by M. Kanso for a linearly degenerate Kerr-Debye model (see below) \cite{kanso}.

Our aim here is to construct an accurate and efficient scheme for Kerr model (\ref{Kerr3D}). In particular, we have to be able to approximate the shocks which, even with smooth initial data, can appear in finite time, see \cite{c-h2}. For this purpose, in the framework of finite volumes, we are going to construct the Godunov scheme for system (\ref{Kerr3D}) in one and two space dimensions. 

As well known, the solution of the Riemann problem is the cornerstone of Godunov scheme. Consider a system of conservation laws 
\[ \partial_t u_j + \di \f2_j(u)=0, \quad 1 \leq j \leq N.\] 
Let $\{C_\alpha, \; \alpha \in A\}$ be an admissible mesh of the computational domain, and let us denote $\Gamma_{\alpha \beta}$ the common edge of $C_\alpha$ and $C_\beta$, and $\nor_{\alpha \beta}$ the unitary normal vector to $\Gamma_{\alpha \beta}$, pointing from $C_\alpha$ to $C_\beta$. The approximation $\unpa$ of $u(.,t_{n+1})$ on $C_\alpha$ is computed as follows:
\begin{equation}\label{int-VF-ns}
\unpa=\una - \frac{\dt}{|C_\alpha|}\sum_{\beta,C_{\alpha}\cap C_{\beta} \not=\emptyset} \Phi(\una,\unb,\nor_{\alpha \beta})\,|\Gamma_{\alpha \beta} |,
\end{equation}
the numerical flux function $\Phi$ being defined by
\begin{equation}\label{phigen} 
\Phi_j(u,v,\nor)=\f2_j({\overline w}(0))\cdot \nor, \quad  1 \leq j \leq N,
\end{equation}
and ${\overline w}(\frac{y}{t})=w(y,t)$ is the solution of the one-dimensional Riemann problem
\[ 
\left\{
\begin{array}{l} 
\partial_t w_j + \partial_y (\f2_j(w)\cdot \nor)=0,\quad 1 \leq j \leq N,\\
w(y,0)=\left| \begin{array}{lr}
u&{\rm if} \; \; y<0, \\
v&{\rm if} \; \; y>0. 
\end{array}
\right.
\end{array}
\right.
\] 
Therefore, we have to solve the Riemann problem for Kerr system (\ref{Kerr3D}). As detailed hereafter, we have 4 linearly degenerate fields and 2 others are neither genuinely nonlinear, nor linearly degenerate, and the related eigenvalues own variable multiplicity. Hence the classical Lax existence results do not apply. In \cite{de-la-B},  a first existence result has been established for a reduced $4 \times 4$ case with the assumption that $D\cdot \nor=0$ and $H \cdot \nor =0$. In particular, the two-dimensional TM case, which is very important for the applications, does not enter this framework. Here, we deal with the full vector system and we implement the exact solution of the Riemann problem. 

In this article, we pay a particular attention to the 2D Transverse Magnetic (TM) case: for solutions depending on $x=(x_1,x_2)$, if one assumes that the data are such that $D_3=0$ and $H_1=H_2=0$, then so is the solution. Denoting $D=(D_1,D_2,0)$, (\ref{Kerr3D}) reduces to a $3 \times 3$ system: 
\begin{equation}\label{33TM}
\left\{
\begin{array}{lcl}
\partial_t D_1 - \partial_2H_3 = 0,\\
\partial_t D_2 + \partial_1H_3 = 0,\\
\partial_t H_3 + \mu_0^{-1}\left(\partial_1 (\E_2(D))- \partial_2 (\E_1(D))\right)=0.
\end{array}
\right.
\end{equation}
An even more particular case is the 1D setting with $D_1=0$ and $x=x_1$:
\begin{equation}\label{kerr}
\left\{
\begin{array}{lcl}
\partial_t D_2 + \partial_x H_3 &=&0, \\
\partial_t H_3 + \mu_0^{-1} \partial_x p(D_2) &=&0.
\end{array}
\right.
\end{equation}
It turns out that the 1D Kerr system (\ref{kerr}) is a so-called
p-system. As $p^\prime >0$ it is strictly hyperbolic but the
properties of the function $p$ differ from the ones which appear in
the general framework of gas dynamics or viscoelasticity \cite{Tz}. Here:
\[ p(0)=0, \quad p^\prime >0, \]
and $p$ is strictly convex on $]-\infty,0]$, strictly concave on $[0,+\infty[$.

The plan of the paper is the following. In section \ref{66-3D} we solve the Riemann problem for the $6 \times 6$ system (\ref{Kerr3D}): Lax solution is constructed and its existence and uniqueness are proved. Moreover, if the data are TM, so is the solution. 

In section \ref{2DTMsec}, we focus on the 2D TM system (\ref{33TM}). Here, we have to use Liu's condition (E) (\cite{Liu74}-\cite{Liu76}) for the admissibility of shocks and the solution of the Riemann problem. This solution is compared to the one obtained in section \ref{66-3D}. The mathematical entropy being the physical electromagnetic energy, it is proved that two distinct entropy solutions of (\ref{33TM}) (and (\ref{Kerr3D})) can exist. This may be surprising but we recall that no general uniqueness result is available for weak entropy solutions of systems of conservation laws. In the particular case of the Riemann problem, uniqueness theorems are proved only within a prescribed class of solutions, see \cite{serre}, \cite{Liu74}, and theorems \ref{exis-sol} and \ref{exis-sol2DTM}  here below.

Section \ref{num-sect} is devoted to numerical experiments. The $6 \times 6$ Riemann solver is implemented in one space dimension and then in a two-dimensional cartesian setting. Comparisons with exact solutions are performed. In case of non-uniqueness,  the computed solution is the Liu's one. Finally, a physically realistic case inspired from  \cite{ziol-jud} is analyzed. 

In each case, numerical comparison is done with a relaxation scheme obtained as follows: if the  medium exhibits a finite response time $\tau>0$, one should use the Kerr-Debye model for which
\begin{equation}\label{polar-KD}
P=P_{KD}=\epsilon_0  \chi E, \; \; \; \partial_t \chi + \frac{1}{\tau}
\chi= \frac{1}{\tau}\epsilon_r |E|^2 .
\end{equation}
Then one deals with a quasilinear hyperbolic system with source:
\begin{equation}\label{KD3D}
\left\{
\begin{array}{lr}
\partial_t D_\eps - \rot H_\eps = 0,& \\
\partial_t H_\eps + \mu_0^{-1}\rot E_\eps =0,& D_\eps=\epsilon_0(1+\chi_\eps) E_\eps \\
\partial_t \chi_\eps = \ds \frac{1}{\eps}\left(
\epsilon_r  |E_\eps|^2-\chi_\eps\right).&
\end{array}
\right.
\end{equation}
Let $U_\eps=(D_\eps,H_\eps,\chi_\eps )$ be a solution of (\ref{KD3D}). Formally, if $U_\eps \rightarrow U=(D,H,\chi)$ when $\tau$ tends to zero, then $U \in \mathcal{V}$ where $\mathcal{V}$ is the equilibrium manifold for the Kerr-Debye model:
\[ \mathcal{V}=\{ (D,H,\ki); \; \displaystyle \frac{\epsilon_r |D|^2 }{\epsilon_0^2
  (1+\ki)^{2}} - \ki=0 \}= \{ (D,H,\ki); \; \ki= \epsilon_r p^2(|D|)\}. \]
Therefore, $u=(D,H)$ is a solution of the Kerr system (\ref{Kerr3D}).

The Kerr-Debye model is a relaxation approximation of the Kerr
model and $\tau$ is the relaxation parameter. The Kerr system is the reduced system for the Kerr-Debye one in the sense of \cite{cll}, see also \cite{revnat} for a survey on hyperbolic relaxation problems.  In \cite{hh}, \cite{c-h2} some rigorous existence and convergence results are proved for Kerr-Debye system. In particular, for $\tau \not=0$, at least in certain configurations with smooth data, no shock is created.

Numerically, we take advantage of the fact that all the characteristic fields of (\ref{KD3D}) are linearly degenerate to design a scheme which owns a relaxed limit when $\tau=0$ and this limit is a consistent entropic approximation of (\ref{Kerr3D}). This method has been developed in \cite{kanso} for $2 \times 2$ and $3 \times 3$ cases. It is easy to compute the general case with the same ideas, see Annex. This gives us an explicit scheme, based on a physical model. In all cases,  the results are nearly the same as those of Godunov scheme, so that both method are proved to be efficient. This point is discussed in the conclusion.
\section{The Riemann problem for the full vector Kerr system} \label{66-3D}
In this part we solve the Riemann problem for system
(\ref{Kerr3D}). We denote $u=(D,H)$, $E=\E(D)$. For given $\omega \in \R^3$,
$|\omega|=1$, and $u_-, u_+ \in \R^6$, we fix the
initial data
\begin{equation}\label{riem-data}
u(x,0)=\left\{
\begin{array}{lr} 
u_-& if \quad x \cdot \omega <0, \\
u_+& if \quad x \cdot \omega >.
\end{array}
\right.
\end{equation}
We look for a selfsimilar entropy solution $u(x,t)=V(\frac{x \cdot \omega }{t})$ of (\ref{Kerr3D})(\ref{riem-data}). Denoting $y=x \cdot \omega$, we
therefore have to solve the Riemann problem for the one-dimensional
$6 \times 6$ system
\begin{equation}\label{Kerr-omeg}
\left\{
\begin{array}{lcl}
\partial_t D - \partial_y(\omega \pv H) = 0,\\
\partial_t H + \mu_0^{-1}\partial_y(\omega \pv \E(D)) =0.
\end{array}
\right.
\end{equation}
The admissible shocks of the Kerr
system have already been studied in \cite{dabh-CMS}. For the sake of completenes those
results are briefly recalled here. Then we construct the rarefaction
waves and we solve the whole Riemann problem.
\subsection{Characteristic fields of Kerr system, admissible shocks}\label{full-kerr-section}
Using the results of
\cite{dabh-CMS} we can state:
\begin{prop}\label{vpvp} \cite{dabh-CMS} 
The Kerr system (\ref{Kerr3D}) is hyperbolic diagonalizable: for
all $\omega \in \R^3$, $|\omega|=1$, the
eigenvalues of system (\ref{Kerr-omeg}) are given by 
\begin{equation}\label{6vp}
\lambda_{1}\leq \lambda_{2}=-\lambda< \lambda_{3}=\lambda_{4}=0 < \lambda_{5}=\lambda \leq  \lambda_{6}=-\lambda_{1}
\end{equation}
where $c=\sqrt{\epsilon_0 \mu_0}^{-1}$ is the light velocity, 
\begin{equation}\label{6vpvp}
\lambda_{1}^2=\frac{c^2}{1+\epsilon_r \nn}, \; \; \;
\lambda^2=c^2 \, \frac{1+\epsilon_r(\nn +2 (E\cdot
  \omega)^2)}{(1+\epsilon_r \nn)(1+3 \epsilon_r \nn)}.
\end{equation}
The inequalities in (\ref{6vp}) are strict if and only if $\omega \pv D \not= 0$.
\end{prop}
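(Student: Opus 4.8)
The plan is to write down the Jacobian of the one--dimensional flux in (\ref{Kerr-omeg}) and diagonalize it by hand. Set $u=(D,H)$, $E=\E(D)$, and let $\Omega$ be the skew--symmetric matrix defined by $\Omega v=\omega\pv v$, so that $\Omega^{T}=-\Omega$ and, since $|\omega|=1$, $\Omega^{2}=\omega\omega^{T}-I$. The flux Jacobian is the block matrix
\[
A=\begin{pmatrix}0 & -\Omega\\ \mu_{0}^{-1}\Omega\,\E'(D) & 0\end{pmatrix},
\]
where $\E'(D)$ is the Jacobian of $\E$ at $D$, equal by the chain rule to $\D'(E)^{-1}$, and $\D'(E)=\epsilon_{0}(1+\epsilon_{r}\nn)I+2\epsilon_{0}\epsilon_{r}\,EE^{T}$. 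The matrix $\D'(E)$ is symmetric, with eigenvalue $\epsilon_{0}(1+\epsilon_{r}\nn)$ on the plane $E^{\perp}$ and eigenvalue $\epsilon_{0}(1+3\epsilon_{r}\nn)$ on $\R E$; hence $\E'(D)$ is symmetric positive definite, with eigenvalue $[\epsilon_{0}(1+\epsilon_{r}\nn)]^{-1}$ on $E^{\perp}$ and $[\epsilon_{0}(1+3\epsilon_{r}\nn)]^{-1}$ on $\R E$. Diagonalizability over $\R$ is then immediate by symmetrization: the electromagnetic energy $\eta(D,H)=\varphi(D)+\tfrac{\mu_{0}}{2}|H|^{2}$, where $\nabla\varphi=\E(D)$ (a legitimate potential because $\E'(D)$ is symmetric, equivalently because $\E(D)$ is a positive scalar multiple of $D$), is a strictly convex entropy whose Hessian $\eta''=\mathrm{diag}(\E'(D),\mu_{0}I)$ satisfies $\eta''A=(\eta''A)^{T}$; hence $A$ is similar to the symmetric matrix $(\eta'')^{1/2}A(\eta'')^{-1/2}$.

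Next I would compute the eigenvalues. Writing $Ar=\lambda r$ with $r=(r_{D},r_{H})$ gives $-\Omega r_{H}=\lambda r_{D}$ and $\mu_{0}^{-1}\Omega\,\E'(D)r_{D}=\lambda r_{H}$; applying $\Omega$ to the second relation and using $\Omega^{2}=\omega\omega^{T}-I$ eliminates $r_{H}$ and yields
\[
(I-\omega\omega^{T})\,\E'(D)\,r_{D}=\mu_{0}\lambda^{2}\,r_{D}.
\]
If $\lambda=0$, one reads off that $\ker A$ is spanned by $(\D'(E)\omega,0)$ and $(0,\omega)$, so $\lambda_{3}=\lambda_{4}=0$. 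If $\lambda\neq0$, the left--hand side lies in $\omega^{\perp}$, which forces $r_{D}\in\omega^{\perp}$, and there the equation becomes $\mu_{0}\lambda^{2}r_{D}=Br_{D}$, where $B$ is the compression of $\E'(D)$ to the plane $\omega^{\perp}$ --- a $2\times2$ symmetric positive definite matrix. Hence the four nonzero eigenvalues of $A$ are $\pm\sqrt{\mu_{0}^{-1}\nu_{1}}$ and $\pm\sqrt{\mu_{0}^{-1}\nu_{2}}$, where $0<\nu_{1}\le\nu_{2}$ are the eigenvalues of $B$.

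It then remains to identify $\nu_{1},\nu_{2}$ with the quantities in (\ref{6vpvp}). Since the largest eigenvalue of $\E'(D)$ equals $[\epsilon_{0}(1+\epsilon_{r}\nn)]^{-1}$ and is attained on the whole plane $E^{\perp}$, and since $E^{\perp}\cap\omega^{\perp}\neq\{0\}$, the larger eigenvalue of $B$ is $\nu_{2}=[\epsilon_{0}(1+\epsilon_{r}\nn)]^{-1}$ for every $\omega$; with $c^{2}=(\epsilon_{0}\mu_{0})^{-1}$ this gives $\lambda_{1}^{2}=\lambda_{6}^{2}=\mu_{0}^{-1}\nu_{2}=c^{2}/(1+\epsilon_{r}\nn)$. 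For $\nu_{1}$ I would use the trace: $\nu_{1}+\nu_{2}=\mathrm{tr}\,B=\mathrm{tr}\,\E'(D)-\omega^{T}\E'(D)\omega$, and inserting the spectral decomposition of $\E'(D)$ together with $\omega^{T}(EE^{T}/\nn)\omega=(E\cdot\omega)^{2}/\nn$ and simplifying gives $\nu_{1}=\epsilon_{0}^{-1}\,\frac{1+\epsilon_{r}(\nn+2(E\cdot\omega)^{2})}{(1+\epsilon_{r}\nn)(1+3\epsilon_{r}\nn)}$, so that $\lambda^{2}:=\mu_{0}^{-1}\nu_{1}$ is exactly the expression in (\ref{6vpvp}). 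This algebraic reduction --- together with the separate, trivial check of the case $E=0$, where $\E'(0)=\epsilon_{0}^{-1}I$ and every formula persists with $(E\cdot\omega)^{2}=\nn=0$ --- is the only genuinely computational step.

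Finally, the ordering and strictness: $B$ positive definite gives $\nu_{1}>0$, hence $\lambda>0$, so the inequalities $\lambda_{2}<\lambda_{3}=\lambda_{4}<\lambda_{5}$ always hold strictly; and $\nu_{1}\le\nu_{2}$ follows from Cauchy--Schwarz, $(E\cdot\omega)^{2}\le\nn$, with equality exactly when $E\pv\omega=0$ --- equivalently, since $D$ is a positive scalar multiple of $E$, when $\omega\pv D=0$. Thus the only inequalities in (\ref{6vp}) that can fail to be strict are $\lambda_{1}\le\lambda_{2}$ and $\lambda_{5}\le\lambda_{6}$, and both are strict precisely when $\omega\pv D\neq0$. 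I expect the main obstacle to be purely bookkeeping: the presence of the two distinguished directions $\omega$ and $E$ and the resulting variable multiplicity of the nonzero eigenvalues. The symmetrizer argument disposes of diagonalizability uniformly, so no case distinction is needed there, and the remaining effort is the $2\times2$ spectral identification above.
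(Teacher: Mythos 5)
Your proof is correct, and there is nothing in the paper to compare it with: the proposition is imported verbatim from \cite{dabh-CMS} and no proof is given here, so your argument stands as a self-contained derivation. Its structure is sound: symmetrize with the Hessian $\mathrm{diag}(\E'(D),\mu_0 I)$ of the energy (legitimate because $\E'(D)=\D'(E)^{-1}$ is symmetric positive definite, $\D'(E)=\epsilon_0(1+\epsilon_r|E|^2)I+2\epsilon_0\epsilon_r EE^T$), which settles real diagonalizability uniformly; eliminate $r_H$ to see that the kernel is two-dimensional and that the nonzero spectrum is governed by the compression $B$ of $\E'(D)$ to $\omega^\perp$; identify the eigenvalues of $B$ from the spectral decomposition of $\D'(E)$ (eigenvalue $\epsilon_0(1+\epsilon_r|E|^2)$ on $E^\perp$, $\epsilon_0(1+3\epsilon_r|E|^2)$ on $\R E$) together with the trace identity; and obtain the strictness criterion from Cauchy--Schwarz, $(E\cdot\omega)^2\le|E|^2$ with equality iff $\omega\times E=0$, which is equivalent to $\omega\times D=0$ since $D$ is a positive multiple of $E$. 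I checked the trace computation: it does yield $\nu_1=\epsilon_0^{-1}\bigl(1+\epsilon_r(|E|^2+2(E\cdot\omega)^2)\bigr)/\bigl((1+\epsilon_r|E|^2)(1+3\epsilon_r|E|^2)\bigr)$, i.e.\ exactly $\lambda^2$ of (\ref{6vpvp}), and the degenerate case $E=0$ is harmless as you note.

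The one step you should write out is the converse half of the reduction. You prove that every nonzero eigenvalue $\lambda$ of the flux Jacobian satisfies $\mu_0\lambda^2\in\{\nu_1,\nu_2\}$, but the claim that all four values $\pm\sqrt{\nu_i/\mu_0}$ actually occur, with the multiplicities implicit in (\ref{6vp}), needs one more line: either construct the eigenvector directly by setting $r_H=(\mu_0\lambda)^{-1}\Omega\,\E'(D)r_D$ for $r_D$ an eigenvector of $B$ (a one-line check using $\Omega^2=\omega\omega^T-I$), or observe that $r\mapsto r_D$ is injective on each eigenspace with $\lambda\neq 0$ and count dimensions against the diagonalizability you already established. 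With that sentence added, the proof is complete.
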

\begin{prop}\label{vnl} \cite{dabh-CMS} 
The characteristic fields 1,3,4,6 are linearly degenerate. \\ 
If $\om \pv D \not= 0$ the eigenvectors for $\lambda_2$ and $\lambda_5$ are: 
\[
 r_{i}(u,\omega)=\left(\begin{array}{c}
{\rm sgn}(\lambda_i) \om \pv (\om \pv D)  \\
-\lambda \, \omega \pv D 
\end{array}\right), \; \; \; i=2,5.
\]
The characteristic fields 2 and 5 are
genuinely nonlinear in the direction $\omega$ in the open set
\[ \Omega(\omega)=\{ (D,H)\in \R^6 \; ; \; \om \pv D \not=0  \}
\]
and for all $u\in \Omega(\omega)$ and $i\in \{2,5\}$
\begin{equation}\label{vnldef}
\lambda^\prime_{i}(u,\omega) \, r_{i}(u,\omega)>0.
\end{equation}
\end{prop}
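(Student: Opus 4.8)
The plan is to compute the flux Jacobian of the one‑dimensional system (\ref{Kerr-omeg}) explicitly, read off the eigenvectors attached to the eigenvalues of Proposition \ref{vpvp}, and then evaluate the directional derivatives $\nabla_u\lambda_i\cdot r_i$ field by field. The one ingredient needed is the differential of the constitutive law: since $E=\E(D)$ is parallel to $D$ with $|E|=p(|D|)$, differentiating $D=\epsilon_0(1+\epsilon_r|E|^2)E$ shows that $\E'(D)$ is a rank‑one perturbation of a scalar matrix,
\[
\E'(D)\,\delta D=\frac{1}{\epsilon_0(1+\epsilon_r|E|^2)}\Bigl(\delta D-\frac{2\epsilon_r(E\cdot\delta D)}{1+3\epsilon_r|E|^2}\,E\Bigr),
\]
so the Jacobian of (\ref{Kerr-omeg}) sends $(\delta D,\delta H)$ to $(-\omega\pv\delta H,\ \mu_0^{-1}\omega\pv(\E'(D)\delta D))$. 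For fields $3$ and $4$ the eigenvalue is identically $0$ by Proposition \ref{vpvp}, so linear degeneracy is automatic. For fields $1$ and $6$, the eigenvalue equation forces the eigenvectors to have $D$‑block proportional to $\omega\pv D$ (the direction orthogonal to both $\omega$ and $D$) and $H$‑block proportional to $\omega\pv(\omega\pv D)$; as $\lambda_1^2=c^2/(1+\epsilon_r|E|^2)$ depends on $u$ only through $|D|^2$ (because $|E|=p(|D|)$), and $D\cdot(\omega\pv D)=0$, we get $\nabla_u\lambda_1\cdot r_1=\nabla_u\lambda_6\cdot r_6=0$, so fields $1$ and $6$ are linearly degenerate too.

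For the genuinely nonlinear fields I would first verify directly that the stated $r_2,r_5$ solve $A(u,\omega)r_i=\lambda_i r_i$: the first block collapses via $\omega\pv(\omega\pv v)=(\omega\cdot v)\omega-v$, and the second, after inserting $\E'(D)$ and the identity $|\omega\pv D|^2=|D|^2-(\omega\cdot D)^2=\epsilon_0^2(1+\epsilon_r|E|^2)^2(|E|^2-(E\cdot\omega)^2)$, recovers exactly the value of $\lambda^2$ in (\ref{6vpvp}) — a useful cross‑check with Proposition \ref{vpvp}, in which the factor $\sgn(\lambda_i)$ is what makes the first block consistent with $\lambda_i=\pm\lambda$. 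Next, $\lambda$ depends on $u$ only through $\alpha:=|D|^2$ and $\beta:=(\omega\cdot D)^2$ (and not on $H$), while the $D$‑block of $r_i$ equals $\sgn(\lambda_i)\bigl((\omega\cdot D)\omega-D\bigr)$, which is orthogonal to $\nabla_D\beta$ and satisfies $\nabla_D\alpha\cdot\bigl((\omega\cdot D)\omega-D\bigr)=-2|\omega\pv D|^2$; writing $\nabla_u\lambda_i=\sgn(\lambda_i)\nabla_u\lambda$, the signs cancel and one obtains, uniformly for $i=2$ and $i=5$,
\[
\nabla_u\lambda_i(u,\omega)\cdot r_i(u,\omega)=-2\,|\omega\pv D|^2\ \frac{\partial\lambda}{\partial\alpha}\Big|_{\beta}.
\]
On $\Omega(\omega)$ we have $|\omega\pv D|^2>0$, so genuine nonlinearity and the sign in (\ref{vnldef}) both amount to the single inequality $\partial\lambda/\partial\alpha|_{\beta}<0$: $\lambda$ strictly decreasing in $|D|^2$ at fixed $(\omega\cdot D)^2$.

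This last monotonicity is the main obstacle, and it is where the precise form of $q$ (hence of $p$) enters. Setting $x:=1+\epsilon_r|E|^2\ge1$ and $k:=\epsilon_r(\omega\cdot D)^2/\epsilon_0^2\ge0$, and using $|D|^2=\epsilon_0^2|E|^2(1+\epsilon_r|E|^2)^2$ together with $(E\cdot\omega)^2=(\omega\cdot D)^2/(\epsilon_0(1+\epsilon_r|E|^2))^2$ — both immediate from (\ref{defP}) — one rewrites
\[
\lambda^2=c^2\,\frac{x^3+2k}{x^3(3x-2)},
\]
where at fixed $(\omega\cdot D)^2$ the variable $x$ is a strictly increasing function of $|D|^2$. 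A direct differentiation shows the numerator of $\partial_x(\lambda^2)$ is $-3x^6-12k\,x^2(2x-1)$, which is strictly negative for $x\ge1$ and $k\ge0$; hence $\lambda^2$, and therefore $\lambda$, is strictly decreasing in $|D|^2$ on $\Omega(\omega)$. This yields $\nabla_u\lambda_i\cdot r_i>0$ and closes the argument. The only thing to watch is to keep reducing every expression, via $|E|=p(|D|)$ and $E\parallel D$, to a function of $|D|^2$ and $(\omega\cdot D)^2$; once this is done everything is short and explicit.
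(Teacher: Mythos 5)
Your proposal is correct. Note that the paper itself offers no proof of this proposition: it is quoted from \cite{dabh-CMS}, so there is no internal argument to compare with; your self-contained verification is therefore a legitimate (and useful) alternative route. I checked the key computations: the formula for $\E'(D)$ follows from differentiating $D=\epsilon_0(1+\epsilon_r|E|^2)E$ and eliminating $E\cdot\delta E$ via $E\cdot\delta D=\epsilon_0(1+3\epsilon_r|E|^2)E\cdot\delta E$; with it, the stated $r_2,r_5$ do satisfy $A(u,\omega)r_i=\lambda_i r_i$ and reproduce exactly the value of $\lambda^2$ in (\ref{6vpvp}); the reduction of the directional derivative to $-2|\omega\pv D|^2\,\partial\lambda/\partial\alpha$ at fixed $\beta=(\omega\cdot D)^2$ is right (the $H$-block is irrelevant since $\lambda$ does not depend on $H$, and the two $\sgn(\lambda_i)$ factors cancel); and in the variables $x=1+\epsilon_r|E|^2$, $k=\epsilon_r(\omega\cdot D)^2/\epsilon_0^2$ one indeed gets $\lambda^2=c^2(x^3+2k)/(x^3(3x-2))$ with $\partial_x$-numerator $-3x^6-12kx^2(2x-1)<0$ for $x\ge1$, $k\ge0$, which gives (\ref{vnldef}) on $\Omega(\omega)$. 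Two small points of scope, consistent with the statement itself: your argument for the linear degeneracy of fields $1$ and $6$ uses the eigenvectors $(\omega\pv D,\ \lambda_{1,6}\,\omega\pv(\omega\pv D))$, which exist (and span the eigenspaces, since those eigenvalues are then simple) only where $\omega\pv D\neq0$ — on the degenerate set the fields $1,2$ and $5,6$ merge, exactly as the proposition's restriction to $\Omega(\omega)$ anticipates; and for fields $3,4$ linear degeneracy is indeed automatic from the constant eigenvalue, with no need to exhibit eigenvectors. With these caveats made explicit, the argument is complete.
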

We point out the fact that the fields 2 and 5 are neither genuinely nonlinear, nor linearly degenerate, so that the general theory about the resolution of the Riemann problem does not apply here. The characterization of admissible plane discontinuities is now
briefly recalled.

The Rankine-Hugoniot conditions for a discontinuity $(u_-,u_+)$
propagating with velocity $\sigma$ write
\begin{equation}\label{RH}
\begin{array}{lr}
\sigma [D]= - \omega \pv [H],& 
\sigma \mu_0 [H]= \omega \pv [E]
\end{array}
\end{equation}
where for a given quantity $v$, $[v]=v_+ - v_-$. \\
\noindent The divergence free conditions write
\begin{equation}\label{div0chocD}
\omega \cdot [D]=0,
\end{equation}
\begin{equation}\label{div0chocH}
\omega \cdot [H]=0.
\end{equation}
If $\sigma \not=0$, they are fulfilled as soon as (\ref{RH})
is satisfied.

\begin{prop}\label{ddc0} 
{\bf Stationary contact discontinuities.}
Stationary contact discontinuities are characterized by
\begin{equation}\label{rhstat}
\begin{array}{ll}
\omega \pv [H]=0,&
\omega \pv [E]=0.
\end{array}
\end{equation}
The divergence free ones are constant. 
\end{prop}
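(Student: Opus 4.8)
The plan is to treat the two assertions separately; the characterization is immediate, and essentially all the work concerns the divergence--free case.

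\emph{Step 1 (characterization).} A stationary discontinuity propagates with speed $\sigma=0$. Substituting $\sigma=0$ in the Rankine--Hugoniot relations (\ref{RH}) reads off exactly $\omega\pv[H]=0$ and $\omega\pv[E]=0$, i.e.\ (\ref{rhstat}); conversely, any jump $(u_-,u_+)$ satisfying (\ref{rhstat}) solves (\ref{RH}) with $\sigma=0$. One notes moreover that $\sigma=0$ is always the double eigenvalue $\lambda_3=\lambda_4$ (the other eigenvalues of (\ref{Kerr-omeg}) are nonzero by (\ref{6vpvp})), whose fields are linearly degenerate by Proposition~\ref{vnl}; hence such jumps are genuine contact discontinuities.

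\emph{Step 2 (componentwise splitting).} For the second assertion I would add the divergence--free conditions $\omega\cdot[D]=0$, $\omega\cdot[H]=0$ and decompose every vector into its $\omega$--component and its component in $\omega^\perp$. From $\omega\pv[H]=0$ the jump $[H]$ is parallel to $\omega$, and being orthogonal to $\omega$ it must vanish, so $[H]=0$. For $D$ I would use (\ref{defP}): $E=\E(D)$ is a \emph{positive scalar multiple} of $D$, say $E=\alpha(|D|)D$ with $\alpha(r)=\bigl(\epsilon_0(1+\epsilon_r p^2(r))\bigr)^{-1}>0$, and $|E|=p(|D|)$ gives the identity $\alpha(r)\,r=p(r)$ for $r\ge0$. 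Writing $D_\pm=s\,\omega+D_\pm^\perp$ with $s=\omega\cdot D_\pm$ common to both states (by $\omega\cdot[D]=0$) and $D_\pm^\perp\perp\omega$, so that $|D_\pm|=\sqrt{s^2+|D_\pm^\perp|^2}$, the condition $\omega\pv[E]=0$ says $E_+-E_-$ is parallel to $\omega$, i.e.\ its $\omega^\perp$--part vanishes:
\begin{equation}\label{perpid}
\alpha(|D_+|)\,D_+^\perp=\alpha(|D_-|)\,D_-^\perp .
\end{equation}
Taking norms in (\ref{perpid}) reduces the problem to the injectivity of the scalar map $\nu\mapsto g(\nu):=\alpha\bigl(\sqrt{s^2+\nu^2}\bigr)\,\nu$ on $[0,+\infty[$.

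\emph{Step 3 (scalar monotonicity — the only real point).} The crux, and the step I expect to need the only genuine (but short) computation, is that $g$ is strictly increasing. Setting $\rho:=\sqrt{s^2+\nu^2}\ge|s|$ and using $\alpha(\rho)\rho=p(\rho)$ one gets
\[
g(\nu)^2=\alpha(\rho)^2\nu^2=\alpha(\rho)^2(\rho^2-s^2)=p(\rho)^2\Bigl(1-\frac{s^2}{\rho^2}\Bigr),
\]
a product of nonnegative functions of $\rho$ that are respectively strictly increasing and nondecreasing on $\rho\ge|s|$ (recall $p=q^{-1}$ with $q'>0$ and $p(0)=0$, so $p\ge0$ is strictly increasing on $[0,+\infty[$); hence $g^2$, and so $g\ge0$, is strictly increasing in $\nu$ (when $s=0$ this is just $g=p$). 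Consequently $|D_+^\perp|=|D_-^\perp|$, whence $|D_+|=|D_-|$, $\alpha(|D_+|)=\alpha(|D_-|)>0$, and then (\ref{perpid}) forces $D_+^\perp=D_-^\perp$, so $D_+=D_-$. Therefore $[D]=[E]=[H]=0$, i.e.\ $u_-=u_+$: a divergence--free stationary contact discontinuity is constant.
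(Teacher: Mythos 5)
Your proof is correct, but it takes a genuinely different route from the paper's. The paper argues in the $E$-variables: from $\omega\pv[E]=0$ the tangential part $\gamma=\omega\pv(\omega\pv E_\pm)$ is common to both states, and the divergence condition $\omega\cdot[D]=0$, written through the explicit cubic relation $D_\pm=\epsilon_0(1+\epsilon_r|E_\pm|^2)E_\pm$, becomes the scalar equation $e_+(1+\epsilon_r(\gamma^2+e_+^2))=e_-(1+\epsilon_r(\gamma^2+e_-^2))$ in the normal components $e_\pm=E_\pm\cdot\omega$; strict monotonicity of the cubic $e\mapsto e(1+\epsilon_r(\gamma^2+e^2))$ gives $e_+=e_-$, hence $[E]=0$ and then $[D]=0$. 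You argue in the $D$-variables instead: the normal component $s=\omega\cdot D_\pm$ is common by the divergence condition, and $\omega\pv[E]=0$, via $E=\alpha(|D|)D$, becomes an equation for the tangential parts of $D$; the crux is then the strict monotonicity of $\nu\mapsto\alpha\bigl(\sqrt{s^2+\nu^2}\bigr)\nu$, which you obtain from the identity $\alpha(\rho)\rho=p(\rho)$ and the factorization $g^2=p(\rho)^2\bigl(1-s^2/\rho^2\bigr)$. The two proofs use the same two ingredients (one divergence condition and the stationary Rankine--Hugoniot relation from (\ref{rhstat})), just solved for in the opposite order: the paper's choice makes the monotonicity completely elementary (a polynomial with positive derivative), needing nothing about $p=q^{-1}$ beyond its definition, whereas your choice stays in the conserved variable $D$ at the cost of a slightly more delicate, but still short and correct, monotonicity argument (your product-of-monotone-factors step does need, and has, the observation that the second factor is positive for $\nu>0$). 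Your Step~1, verifying the characterization and identifying these jumps with the linearly degenerate fields $\lambda_3=\lambda_4=0$, is a harmless addition which the paper leaves implicit.
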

\begin{proof}
If (\ref{div0chocD})-(\ref{div0chocH}) are satisfied for a stationary shock, then
$[H]=0$. Let us now prove that $[D]=0$. According to Rankine-Hugoniot
conditions (\ref{rhstat}), we have 
\[ [\omega \pv(\omega \pv E)]=0.\]
Denoting $\gamma=\omega \pv(\omega \pv E_\pm)$, $e_\pm=E_\pm
\cdot \omega$, we have $D_\pm=\epsilon_0(1+\epsilon_r |E_\pm|^2)E_\pm$ and
\[
|E_\pm|^2=e_\pm^2+ \gamma^2
\]
so that $[D]\cdot \omega=0$ if and only if 
\[
e_+(1+\epsilon_r(\gamma^2+e_+^2))=e_-(1+\epsilon_r(\gamma^2+e_-^2))\]
which is equivalent to $e_+=e_-$. As a consequence, for a divergence
free stationary shock, one has $[E]=[(E\cdot \omega)\omega - \omega \pv(\omega
\pv E)]=0$ and thus $[D]=0$. 
\end{proof}
The fields 1 and 6 are linearly degenerate. The associated
contact discontinuities are characterized as follows:
\begin{prop}\label{ddc16}\cite{dabh-CMS} 
A discontinuity $\sigma$, $u_+$, $u_-$ is a contact discontinuity
associated to $\lambda_1$ or $\lambda_6$ if and only if 
\begin{equation}\label{defddc}
\left\{
\begin{array}{l}
   |E_+| = |E_-|  ,    \\
\sigma^2=c^2(1+\epsilon_r|E_+|^2)^{-1}=c^2(1+\epsilon_r|E_-|^2)^{-1},
\end{array}
\right.
\end{equation}
condition (\ref{div0chocD}) is satisfied, and 
\begin{equation}\label{HHHH}
[H]=\sigma \, \omega \pv [D].
\end{equation}
Moreover the only discontinuities satisfying Rankine-Hugoniot
conditions (\ref{RH}) and such that $|E_-|=|E_+|$ are the above
contact discontinuities. 
\end{prop}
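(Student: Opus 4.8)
The plan is to treat the first and last fields together via $\lambda_6=-\lambda_1$ and to rely on the standard description of contact discontinuities of a linearly degenerate field (see e.g.\ \cite{serre}): a triple $(\sigma,u_-,u_+)$ is a contact discontinuity associated to $\lambda_1$ (resp.\ $\lambda_6$) precisely when it satisfies the Rankine--Hugoniot relations \eqref{RH} and $\sigma=\lambda_1(u_-)=\lambda_1(u_+)$ (resp.\ $\sigma=\lambda_6(u_-)=\lambda_6(u_+)$). By \eqref{6vpvp} and \eqref{defP} the quantity $\lambda_1(u)^2=c^2(1+\epsilon_r|E|^2)^{-1}$ depends on $u=(D,H)$ only through $|E|=p(|D|)$, while $\lambda_1<0<\lambda_6$ have fixed signs (recall \eqref{6vp}); hence the requirement $\sigma=\lambda_1(u_-)=\lambda_1(u_+)$ (resp.\ $\lambda_6$) is \emph{exactly} equivalent to the first two lines of \eqref{defddc}, i.e.\ $|E_-|=|E_+|$ and $\sigma^2=c^2(1+\epsilon_r|E_\pm|^2)^{-1}$, the sign of $\sigma$ selecting $\lambda_1$ ($\sigma<0$) or $\lambda_6$ ($\sigma>0$). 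Consequently the whole proposition --- both implications and the ``Moreover'' clause --- reduces to the single claim: \emph{when $|E_-|=|E_+|=:m$, the relations \eqref{RH} are equivalent to the conjunction of $\sigma^2=c^2(1+\epsilon_r m^2)^{-1}$, \eqref{div0chocD} and \eqref{HHHH}.}

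To prove this claim I eliminate $[H]$ from \eqref{RH}: multiplying $\sigma[D]=-\omega\pv[H]$ by $\sigma$, inserting $\sigma\mu_0[H]=\omega\pv[E]$ and using $\omega\pv(\omega\pv a)=(\omega\cdot a)\omega-a$ (as $|\omega|=1$), one obtains
\[ \sigma^2\mu_0[D]=[E]-(\omega\cdot[E])\,\omega, \]
an identity valid for any Rankine--Hugoniot discontinuity. If moreover $|E_-|=|E_+|=m$, the constitutive law $\D(E)=\epsilon_0(1+\epsilon_r|E|^2)E$ gives $[D]=\epsilon_0(1+\epsilon_r m^2)[E]$ (the scalar factor being the \emph{same} on both sides precisely because $|E_-|=|E_+|$), so the identity becomes $\sigma^2\epsilon_0\mu_0(1+\epsilon_r m^2)[E]=[E]-(\omega\cdot[E])\omega$. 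Projecting onto $\omega$ (using $\sigma\neq0$, discussed below, and $1+\epsilon_r m^2>0$) forces $\omega\cdot[E]=0$, hence $\omega\cdot[D]=0$, which is \eqref{div0chocD}; the remaining vector identity then reads $\sigma^2\epsilon_0\mu_0(1+\epsilon_r m^2)[E]=[E]$, so for a nontrivial jump $\sigma^2=\bigl(\epsilon_0\mu_0(1+\epsilon_r m^2)\bigr)^{-1}=c^2(1+\epsilon_r m^2)^{-1}$. Lastly, applying $\omega\pv(\cdot)$ to $\sigma[D]=-\omega\pv[H]$ and using $\omega\cdot[H]=0$ (obtained by dotting $\sigma\mu_0[H]=\omega\pv[E]$ with $\omega$) yields $[H]=\sigma\,\omega\pv[D]$, i.e.\ \eqref{HHHH}. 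The converse is immediate: \eqref{div0chocD} with \eqref{HHHH} gives back $\sigma[D]=-\omega\pv[H]$, and feeding $[D]=\epsilon_0(1+\epsilon_r m^2)[E]$ together with $\sigma^2=c^2(1+\epsilon_r m^2)^{-1}$ into \eqref{HHHH} reproduces $\sigma\mu_0[H]=\omega\pv[E]$; hence \eqref{RH} holds. This settles the claim, and thereby the proposition: the ``only if'' direction combines the reduction of the first paragraph with this computation; the ``if'' direction uses that the four conditions give $\sigma^2=\lambda_1(u_-)^2=\lambda_1(u_+)^2$, hence, the sign being fixed, $\sigma=\lambda_1(u_-)=\lambda_1(u_+)$ or $\sigma=\lambda_6(u_-)=\lambda_6(u_+)$, together with \eqref{RH}; and the ``Moreover'' clause is exactly the claim read from left to right.

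The one point needing care is the exclusion of $\sigma=0$. For an actual $\lambda_1$-- or $\lambda_6$--contact it is automatic, since $\sigma^2=c^2(1+\epsilon_r|E_\pm|^2)^{-1}>0$. For the ``Moreover'' clause, if $\sigma=0$ the displayed identity gives $[E]=(\omega\cdot[E])\omega$, so $E_+$ and $E_-$ have the same component orthogonal to $\omega$; with $|E_+|=|E_-|$ this forces $E_+\cdot\omega=\pm E_-\cdot\omega$, and in the only nontrivial case ($E_+\cdot\omega=-E_-\cdot\omega\neq0$) one has $\omega\cdot[D]=\epsilon_0(1+\epsilon_r m^2)(E_+\cdot\omega-E_-\cdot\omega)\neq0$, contradicting the divergence condition; restricting --- as throughout this section --- to divergence-free discontinuities (the stationary ones among which are constant by Proposition \ref{ddc0}) removes these and leaves exactly the contacts described. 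Apart from this bookkeeping the proof is cross-product algebra plus the monotone change of variable $|D|\mapsto|E|=p(|D|)$, so I do not anticipate a genuine obstacle; the substantive content --- that conserving $|E|$ across a Rankine--Hugoniot jump characterises the $\lambda_1$-- and $\lambda_6$--contacts --- is exactly what the core identity yields when used in both directions.
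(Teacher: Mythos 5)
Your argument is correct, but note that this paper does not actually prove Proposition \ref{ddc16}: it is recalled verbatim from \cite{dabh-CMS}, so there is no in-paper proof to compare against; the closest internal analogue is the proof of Proposition \ref{ddc0}, which manipulates the Rankine--Hugoniot relations and the constitutive law in the same spirit as you do. Your reduction (contact discontinuity $=$ RH plus $\sigma=\lambda_1(u_\pm)$ or $\sigma=\lambda_6(u_\pm)$, which by monotonicity of $|E|\mapsto c(1+\epsilon_r|E|^2)^{-1/2}$ is exactly the first two lines of (\ref{defddc})), the key identity $\sigma^2\mu_0[D]=[E]-(\omega\cdot[E])\omega$, and the observation that $|E_-|=|E_+|$ makes the constitutive factor common, $[D]=\epsilon_0(1+\epsilon_r m^2)[E]$, together give both implications and the ``Moreover'' clause; I checked the converse computation ($[H]=\sigma\,\omega\pv[D]$ with $\omega\cdot[D]=0$ and $\sigma^2\epsilon_0\mu_0(1+\epsilon_r m^2)=1$ reproduces both RH relations) and it is sound. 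The one delicate point is the one you flag yourself: as literally stated, the ``Moreover'' clause needs trivial jumps and $\sigma=0$ discontinuities set aside, since stationary contacts with $[E]$ parallel to $\omega$ and $E_+\cdot\omega=-E_-\cdot\omega$ do satisfy (\ref{RH}) with $|E_-|=|E_+|$; your exclusion of these via the divergence condition and Proposition \ref{ddc0} is a reasonable reading and consistent with how the paper treats stationary contacts, so I see no genuine gap.
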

At this point, it remains to study the discontinuities which are not
contact discontinuities. From now on we call shocks those discontinuities.

For a fixed left state $u_-$ the Hugoniot set of
$u_-$, denoted $\Hcal(u_-)$, is the set of the right states
$u_+$ such that there exists a shock connecting $u_-$ and $u_+$. We
denote then $\sigma=\sigma(u_+,u_-)$ the shock velocity. One can
 give a similar definition by fixing the right state. Moreover, we impose Lax admissibility conditions, which on the one hand ensure entropy dissipation, and on the other hand ensure that one can construct the solution of the Riemann problem as a superposition of simple waves. 
\begin{definition}\label{def-adm-lax}
A discontinuity $\sigma$, $u_-$, $u_+$ is a Lax k-shock if
\begin{equation}\label{lax-2}
\left\{
\begin{array}{l}
\lambda_k(u_+) \leq \sigma  \leq \lambda_{k+1}(u_+) \\
\lambda_{k-1}(u_-) \leq \sigma  \leq \lambda_{k}(u_-).
\end{array}
\right.
\end{equation}
\end{definition}
The following property holds:
\begin{prop}\label{symmetry-choc}
The Lax-admissible shocks are 2-shocks or 5-shocks. 

$\sigma$, $u_-=(D_-,H_-)$, $u_+=(D_+,H_+)$ is a Lax 2-shock if and
only if $-\sigma$, ${\overline u_-}=(-D_+,H_+)$, ${\overline u_+}=(-D_-,H_-)$ is a Lax 5-shock.
\end{prop}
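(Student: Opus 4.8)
The statement splits into two independent claims: (a) a Lax-admissible shock must be a $2$-shock or a $5$-shock, and (b) the explicit $2\leftrightarrow 5$ correspondence under the reflection $\Sigma:(D,H)\mapsto(-D,H)$. I would prove (a) first, and I expect the real difficulty to lie entirely in its $k=1$ case; (b) is then a bookkeeping computation with the symmetry $\Sigma$.

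For (a), the case $k\in\{3,4\}$ is immediate: since $\lambda_3=\lambda_4=0$ by (\ref{6vp}), the inequalities (\ref{lax-2}) force $\sigma=0$, and any stationary discontinuity satisfying (\ref{RH}) is a stationary contact discontinuity by Proposition \ref{ddc0}, hence not a shock. For $k=1$ (the case $k=6$ being symmetric — see the end of the next paragraph), (\ref{lax-2}) gives $\sigma\le\lambda_1(u_-)<0$ and $\lambda_1(u_+)\le\sigma\le\lambda_2(u_+)<0$, hence $\lambda_1(u_-)^2\le\sigma^2\le\lambda_1(u_+)^2$; since $\lambda_1^2=c^2(1+\epsilon_r|E|^2)^{-1}$ by (\ref{6vpvp}), this forces $|E_+|\le|E_-|$. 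On the other hand, a non-contact discontinuity has $\sigma\ne 0$ and, by Proposition \ref{ddc16}, $|E_+|\ne|E_-|$; from (\ref{RH}) one extracts $\omega\cdot[D]=\omega\cdot[H]=0$, $[H]=\sigma\,\omega\times[D]$ and $[E]-\sigma^2\mu_0[D]\parallel\omega$, and the component of the last relation orthogonal to $\omega$ forces $\omega\times(\omega\times D_-)$ and $\omega\times(\omega\times D_+)$ to be collinear, reducing the Rankine--Hugoniot system to a planar one. Along the common transverse direction $\nu$ one then finds $\sigma^2=\mu_0^{-1}[E\cdot\nu]/[D\cdot\nu]$, while $\lambda_1(u_\pm)^2=\mu_0^{-1}(E\cdot\nu)_\pm/(D\cdot\nu)_\pm$, so the requirement $\lambda_1(u_-)^2\le\sigma^2\le\lambda_1(u_+)^2$ says that the chord between the two points $((D\cdot\nu)_\pm,(E\cdot\nu)_\pm)$ of the constitutive curve has slope between the slopes of the rays from the origin to those points — impossible, for distinct points, because of the shape of $p$ ($p(0)=0$, $p$ convex on $]-\infty,0]$, concave on $[0,+\infty[$, as in (\ref{kerr})). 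This monotonicity of the Hugoniot locus relative to the linearly degenerate fields $1$ and $6$ is already established in \cite{dabh-CMS}; its conclusion is that a Lax $1$-shock has $|E_+|=|E_-|$, hence is a contact discontinuity, not a shock.

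For (b), the key is that $\E$ is odd: $|-D|=|D|$ in (\ref{defP}) gives $\E(-D)=-\E(D)$, so the flux $f(D,H)=(-\omega\times H,\ \mu_0^{-1}\omega\times\E(D))$ of (\ref{Kerr-omeg}) satisfies $-\Sigma f(\Sigma w)=f(w)$ with $\Sigma(D,H)=(-D,H)$. This is precisely the condition ensuring that $w(y,t)\mapsto\Sigma w(-y,t)$ maps weak solutions of (\ref{Kerr-omeg}) to weak solutions, and this map is an involution. Applied to the piecewise-constant solution carrying a single Lax $2$-shock $(\sigma;u_-,u_+)$ at $y=\sigma t$, it produces the piecewise-constant solution carrying a single discontinuity of speed $-\sigma$ at $y=-\sigma t$, with left state $\Sigma u_+=(-D_+,H_+)=\overline{u_-}$ and right state $\Sigma u_-=(-D_-,H_-)=\overline{u_+}$; Rankine--Hugoniot is preserved, as one also checks directly from $[\overline{D}]=[D]$, $[\overline{H}]=-[H]$ and $[\overline{E}]=\E(-D_-)-\E(-D_+)=\E(D_+)-\E(D_-)=[E]$. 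Finally, by (\ref{6vpvp}) the eigenvalues depend on $u$ only through $|E|^2$ and $(E\cdot\omega)^2$, so $\lambda_k(\Sigma u)=\lambda_k(u)$; combining this with $\lambda_6=-\lambda_1$, $\lambda_5=-\lambda_2$, $\lambda_3=\lambda_4=0$ from (\ref{6vp}) — which give $\lambda_5(\overline{u_+})=\lambda(u_-)$, $\lambda_6(\overline{u_+})=-\lambda_1(u_-)$, $\lambda_4(\overline{u_-})=0$, $\lambda_5(\overline{u_-})=\lambda(u_+)$ — a one-line substitution turns the pair (\ref{lax-2}) stating that $(\sigma;u_-,u_+)$ is a Lax $2$-shock into the pair (\ref{lax-2}) stating that $(-\sigma;\overline{u_-},\overline{u_+})$ is a Lax $5$-shock, and the converse follows from involutivity. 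The same computation with $1\leftrightarrow 6$ and $2\leftrightarrow 5$ shows that $\Sigma$ interchanges Lax $1$- and $6$-shocks, which is the symmetry invoked in (a).

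The main obstacle is the closing step in (a) for $k=1$ (hence $k=6$): ruling out the Lax $1$-inequalities for a genuine, non-contact discontinuity, where the convex/concave shape of the Kerr constitutive function $p$ must be exploited (this is the part borrowed from \cite{dabh-CMS}). Part (b) and the case $k\in\{3,4\}$ are essentially formal once the symmetry $\Sigma$ and the eigenvalue identities of (\ref{6vp})--(\ref{6vpvp}) are in place.
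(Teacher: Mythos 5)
Your reduction of the second claim to the reflection $(D,H)\mapsto(-D,H)$, $y\mapsto -y$, using the oddness of the constitutive law together with $\lambda_5=-\lambda_2$, $\lambda_6=-\lambda_1$ and $\lambda_k(-D,H)=\lambda_k(D,H)$, is correct, and so is the elimination of $k\in\{3,4\}$ via Proposition \ref{ddc0}. Note that the paper states Proposition \ref{symmetry-choc} without proof (the neighbouring shock results are quoted from \cite{dabh-CMS}), so the only thing to assess is whether your argument actually closes the case $k=1$ (hence $k=6$) -- and there it has a genuine gap.

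Under the strict Lax $1$-inequalities one has $\sigma^2>\lambda_1(u_-)^2$ and $\sigma^2<\lambda_1(u_+)^2$, and the transverse part of the Rankine--Hugoniot relations can be written $(\lambda_1(u_+)^2-\sigma^2)\,\omega\times(\omega\times D_+)=(\lambda_1(u_-)^2-\sigma^2)\,\omega\times(\omega\times D_-)$; the two scalar factors then have opposite signs, so the transverse components of $D_-$ and $D_+$ are forced to be \emph{anti-parallel}, i.e.\ the two points of the constitutive curve lie on opposite sides of the origin. Your ``chord between the two rays is impossible'' step fails exactly in that configuration. Writing $a=|\omega\times(\omega\times D_-)|$, $b=|\omega\times(\omega\times D_+)|$ and $g=f(\cdot,d_0)$ (odd, increasing, strictly concave on $[0,+\infty[$), the chord slope is $\sigma^2=(g(a)+g(b))/(a+b)$, and \emph{both} inequalities $\lambda_1(u_-)^2\le\sigma^2\le\lambda_1(u_+)^2$ are equivalent to $g(b)/b\ge g(a)/a$, that is to $b\le a$; the remaining requirement $\sigma\le\lambda_2(u_+)$, i.e.\ $\sigma^2\ge\partial_1 f(b,d_0)$, also holds for $b$ close to $a$, since $g(b)/b>g'(b)$ by strict concavity. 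Concretely, with $d_0=0$ and normalized constants (so that $\mu_0 f(d,0)=p(d)$), the pair $d_-=-1$, $d_+=0.9$ gives $\sigma^2=(p(1)+p(0.9))/1.9\approx 0.695$, strictly between $p(1)\approx 0.682$ and $p(0.9)/0.9\approx 0.710$, and larger than $p'(0.9)\approx 0.449$: all inequalities of Definition \ref{def-adm-lax} with $k=1$ are satisfied, while $|E_+|\neq|E_-|$, so by Proposition \ref{ddc16} this is not a contact discontinuity. Hence the convex/concave shape of $p$ only rules out the parallel configuration (the one relevant for $2$- and $5$-shocks), and your appeal to the ``monotonicity of the Hugoniot locus'' in \cite{dabh-CMS} does not address the anti-parallel branch, which is the only branch compatible with the Lax $1$-inequalities and therefore contains the whole difficulty. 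To prove the first sentence of the proposition you must either import the precise admissibility analysis of \cite{dabh-CMS} for these near-contact anti-parallel discontinuities or give an argument that genuinely excludes them; the chord-versus-rays comparison cannot, and your conclusion ``a Lax $1$-shock has $|E_+|=|E_-|$'' does not follow from what you wrote.
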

Therefore we just give the results for Lax 2-shocks with a fixed left state $u_-$. In that goal, we define two functions:
\begin{equation}\label{fonctionf}
f(d,d_0)=\frac{c^2 \, d}{1+\epsilon_r
    p^2\left(\sqrt{d_0^2+d^2}\right)},\qquad d\, , d_0\in \R .
\end{equation}
When $d_0$ is fixed, $f(\cdot,d_0)$ is an increasing function, see \cite{dabh-CMS}. Hence we can define
\begin{equation}\label{saut-H-2choc-phi}
\choc(d_+,d_-,d_0) = \left( \left( f(d_+,d_0)-f(d_-,d_0) \right)(d_+-d_-) \right)^{\frac{1}{2}}, \qquad d_+ \, , d_-\, , d_0 \in \R.
\end{equation}
Two cases are under consideration.
\begin{prop}\label{chocs-vrais}\cite{dabh-CMS} 
{\bf Case $D_- \pv \omega\not = 0$.} 

Let $u_-=(D_-,H_-)$ be a fixed left state such that $D_- \pv
\omega\not = 0$. We denote 
\begin{equation}\label{zeta-5}
\zeta = - \, \frac {\omega \pv (\omega \pv D_-)}{|\omega \pv (\omega
  \pv D_-)|} \, .
\end{equation}
Then 
\[ D_-=d_0 \omega + d_- \, \zeta, \quad d_-= |\omega \pv (\omega
  \pv D_-)|>0.\]
The set $\Hcal_2(u_-)$ of the right states $u_+$ connected to $u_-$ by
a Lax 2-shock is a curve parametrized by $d_+\in \R$. It is the set of $(D_+,H_+) \in \R^6$ such that 
\[ 
D_+=d_0\, \omega + \, d_+ \, \zeta,  \quad 
H_+ - H_- = \choc(d_+,d_-,d_0) \om \pv  \zeta, \quad 0 \leq d_+ \leq d_-. 
\]
The shock velocity $\sigma$ satisfies $\sigma <0$ and
\begin{equation}\label{sigmaTM}
\sigma^2=\frac{f(d_+,d_0)-f(d_{-},d_0)}{d_+-d_{-}}\; .
\end{equation}
\end{prop}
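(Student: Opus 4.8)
The plan is to reduce the full vector Rankine--Hugoniot system (\ref{RH}) to a scalar problem for the component of $D$ orthogonal to $\om$, and then to cut out the Lax $2$-shocks from the resulting one-parameter family by a chord/tangent analysis of the function $f$ of (\ref{fonctionf}). For the reduction: a shock has $\sigma\ne 0$, since a stationary discontinuity satisfies (\ref{rhstat}) and is therefore a stationary contact discontinuity by Proposition \ref{ddc0}, which is excluded; hence the divergence constraints are automatic and in particular $D_+\cdot\om=D_-\cdot\om=:d_0$. Eliminating $H$ from (\ref{RH}) gives $\sigma^2\mu_0[D]=-\om\pv(\om\pv[E])=[E]-(\om\cdot[E])\om$, i.e. $\sigma^2\mu_0[D]$ is the $\om$-orthogonal part of $[E]$. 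Writing $D_\pm=d_0\om+D_\pm^\perp$ and using (\ref{defP}) to get $E_\pm^\perp=D_\pm^\perp/(\epsilon_0(1+\epsilon_r p^2(|D_\pm|)))$, this becomes $(\sigma^2\mu_0-\kappa_+^{-1})D_+^\perp=(\sigma^2\mu_0-\kappa_-^{-1})D_-^\perp$ with $\kappa_\pm=\epsilon_0(1+\epsilon_r p^2(|D_\pm|))$; the degenerate possibility $\sigma^2\mu_0=\kappa_+^{-1}=\kappa_-^{-1}$ would force $|E_+|=|E_-|$ and hence a contact discontinuity by Proposition \ref{ddc16}, so $D_+^\perp$ is collinear with $D_-^\perp=d_-\zeta$ ($\zeta$ from (\ref{zeta-5})) and $D_+=d_0\om+d_+\zeta$ for a unique $d_+\in\R$. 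Substituting back and using $\epsilon_0c^2=\mu_0^{-1}$ to recognise $E_\pm^\perp=\mu_0 f(d_\pm,d_0)\zeta$, the scalar relation reads $\sigma^2(d_+-d_-)=f(d_+,d_0)-f(d_-,d_0)$; since $d_+=d_-$ gives $u_+=u_-$, this is (\ref{sigmaTM}), and $\sigma^2>0$ as $f(\cdot,d_0)$ is increasing. Finally $[H]=\sigma\,\om\pv[D]=\sigma(d_+-d_-)\,\om\pv\zeta$ comes from $\sigma[D]=-\om\pv[H]$ and $\om\cdot[H]=0$.

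Next I would impose the Lax conditions (\ref{lax-2}) with $k=2$. From $\sigma\le\lambda_3(u_+)=0$ and $\sigma\ne0$ one gets $\sigma<0$, and since every eigenvalue entering (\ref{lax-2}) for $k=2$ is nonpositive by (\ref{6vp}), squaring turns (\ref{lax-2}) into $\lambda(u_-)^2\le\sigma^2\le\min(\lambda_1(u_-)^2,\lambda(u_+)^2)$. The key observation is that, at a state $d_0\om+d\zeta$, a direct computation from (\ref{6vpvp}), (\ref{defP}) and (\ref{defq}) gives $\partial_d f(d,d_0)=\lambda^2$ and $f(d,d_0)/d=\lambda_1^2$ (for $d\ne0$), while (\ref{6vp}) gives $\lambda_1^2\ge\lambda^2$. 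Thus $\sigma^2$ is precisely the slope of the chord of $F:=f(\cdot,d_0)$ through $(d_-,F(d_-))$ and $(d_+,F(d_+))$, and the inequalities above say that this chord slope lies between $F'(d_-)$ and $F'(d_+)$ and is at most the slope $F(d_-)/d_-$ of the chord from the origin.

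It then remains --- and this is the main obstacle --- to show that these constraints hold exactly when $0\le d_+\le d_-$. For this I would use that $F=f(\cdot,d_0)$ is odd, $F(0)=0$, $F'>0$, strictly convex on $]-\infty,0]$ and strictly concave on $[0,+\infty[$ (stated in \cite{dabh-CMS} for $p$, the case $d_0=0$; in general it also follows by inspecting $\partial_d^2 f$). If $0\le d_+\le d_-$ both points lie on the concave branch, so the mean value theorem together with the monotonicity of $F'$ there gives $F'(d_-)\le\sigma^2\le F'(d_+)$, and concavity makes the secant slope from the moving point to $(d_-,F(d_-))$ nonincreasing in $d_+$ on $[0,d_-]$, whence $\sigma^2\le F(d_-)/d_-$; all three constraints hold. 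For the converse, $d_+>d_-$ gives $\sigma^2=F'(\xi)<F'(d_-)=\lambda(u_-)^2$ by the mean value theorem, violating the lower bound; $-d_-<d_+<0$ gives $\sigma^2>F(d_-)/d_-=\lambda_1(u_-)^2$ since $F(d)/d=\lambda_1^2$ is strictly decreasing in $|d|$; and $d_+\le-d_-$ gives, using oddness and again the monotonicity of $F(d)/d$, $\sigma^2\ge F(|d_+|)/|d_+|=\lambda_1(u_+)^2\ge\lambda(u_+)^2$, contradicting the upper bound. I expect the case $d_+<0$ to be the delicate one, as there the chord straddles the inflection point of $F$ and one must balance the bound coming from $\lambda_1(u_-)$ against the one coming from $\lambda(u_+)$. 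This identifies $\Hcal_2(u_-)$ as claimed, parametrized by $d_+\in[0,d_-]$; the form $[H]=\choc(d_+,d_-,d_0)\,\om\pv\zeta$ follows from $[H]=\sigma(d_+-d_-)\,\om\pv\zeta$ on squaring, since $\sigma<0$ and $d_+\le d_-$ force $\sigma(d_+-d_-)=((f(d_+,d_0)-f(d_-,d_0))(d_+-d_-))^{1/2}=\choc(d_+,d_-,d_0)$.
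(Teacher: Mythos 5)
Your argument is correct, and in fact it cannot be checked against an in-paper proof: the paper only recalls Proposition \ref{chocs-vrais} from \cite{dabh-CMS} without proving it. Your route --- eliminate $H$ from (\ref{RH}) to get $\sigma^2\mu_0[D]=-\om\pv(\om\pv[E])$, deduce collinearity of the transverse components (the degenerate case $\sigma^2\mu_0=\kappa_\pm^{-1}$ being a $1$/$6$-contact discontinuity by Proposition \ref{ddc16}, hence not a shock), and then read the Lax inequalities as a chord/tangent comparison for $f(\cdot,d_0)$ --- is the natural one and rests exactly on the facts the paper itself quotes from \cite{dabh-CMS}: $\partial_1 f(d,d_0)=\lambda^2(d,d_0)$ (formula (\ref{deriv-f})), concavity and monotonicity of $f(\cdot,d_0)$ on $d\geq 0$, oddness of $f(\cdot,d_0)$ and the identity $f(d,d_0)/d=\lambda_1^2$, which follow directly from (\ref{fonctionf}) and (\ref{defP}). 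The verification that $0\le d_+\le d_-$ gives all three squared Lax inequalities, and the three exclusion cases $d_+>d_-$, $-d_-<d_+<0$, $d_+\le-d_-$, are all sound, as is the identification $[H]=\sigma(d_+-d_-)\,\om\pv\zeta=\choc(d_+,d_-,d_0)\,\om\pv\zeta$ from $\sigma<0$.

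One spot should be tightened: in the case $d_+\le-d_-$ you conclude with the chain $\sigma^2\ge\lambda_1(u_+)^2\ge\lambda(u_+)^2$, which, being non-strict, does not by itself contradict $\sigma^2\le\lambda(u_+)^2$ when $d_+=-d_-$ (there $\sigma^2=F(|d_+|)/|d_+|$ exactly). The fix is one line: since $d_+\neq 0$, i.e.\ $\om\pv D_+\neq 0$, the inequality $\lambda(u_+)<\lambda_1(u_+)$ is strict by Proposition \ref{vpvp}, so the contradiction persists; alternatively, $d_+=-d_-$ forces $|E_+|=|E_-|$, and by the last assertion of Proposition \ref{ddc16} such a discontinuity is a contact discontinuity, hence excluded from the shocks being classified.
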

\begin{prop}\label{chocs-ld}\cite{dabh-CMS} 
{\bf Case $D_- \pv \omega = 0$.} 

Let $u_-=(D_-,H_-)$ be a fixed left state such that $D_- \pv
\omega = 0$. 
Then the set $\Hcal(u_-)$ of the right states connected to $u_-$ by a
shock is the set of  $u_+=(D_+,H_+)$ satisfying (\ref{div0chocD}), (\ref{HHHH}) and
\begin{equation}\label{cld}
\sigma^2=\lambda_1^2(u_+)=c^2(1+\epsilon_r |E_+|^2)^{-1}.
\end{equation}
There is no nontrivial Lax 2-shock connecting $u_-$ to a right state
$u_+ \in \Hcal(u_-)$.
\end{prop}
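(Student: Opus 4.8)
The plan is to work directly from the Rankine--Hugoniot relations (\ref{RH}), exploiting the special geometry of the left state. Since $D_- \pv \omega = 0$, the vector $D_-$ is parallel to $\omega$, hence by (\ref{defP}) so is $E_-=\E(D_-)$; in particular $\omega \pv E_- = 0$. First I would dispose of stationary discontinuities: when $\sigma = 0$, the relations (\ref{RH}) reduce to $\omega \pv [H] = 0$ and $\omega \pv [E] = 0$, which is exactly (\ref{rhstat}), so by Proposition \ref{ddc0} such a discontinuity is a contact discontinuity, not a shock. Therefore every shock issued from $u_-$ has $\sigma \neq 0$, and as observed after (\ref{div0chocH}) the divergence conditions then follow: dotting the first relation of (\ref{RH}) with $\omega$ gives $\omega \cdot [D] = 0$, i.e.\ (\ref{div0chocD}), and dotting the second gives $\omega \cdot [H] = 0$. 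Applying $\omega \pv (\cdot)$ to $\sigma [D] = - \omega \pv [H]$ and using $\omega \cdot [H] = 0$ yields $-\sigma\, \omega \pv [D] = -[H]$, which is precisely (\ref{HHHH}).

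Next I would substitute (\ref{HHHH}) into the second relation of (\ref{RH}), obtaining $\sigma^2 \mu_0\, \omega \pv [D] = \omega \pv [E]$. Because $\omega \pv E_- = 0$ and $E_+ = \E(D_+)$ is colinear to $D_+$ with $|E_+| = p(|D_+|)$, one has $\omega \pv [E] = \omega \pv E_+ = \bigl(\epsilon_0(1+\epsilon_r |E_+|^2)\bigr)^{-1}\,\omega \pv D_+ = \bigl(\epsilon_0(1+\epsilon_r |E_+|^2)\bigr)^{-1}\,\omega \pv [D]$, again using $\omega \pv D_- = 0$. If $\omega \pv [D] \neq 0$ we may cancel it and, using $c^2 = (\epsilon_0 \mu_0)^{-1}$ and (\ref{6vpvp}), deduce $\sigma^2 = c^2(1+\epsilon_r |E_+|^2)^{-1} = \lambda_1^2(u_+)$, which is (\ref{cld}). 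If instead $\omega \pv [D] = 0$, then combined with (\ref{div0chocD}) this forces $[D]=0$, whence $[H]=0$ by (\ref{HHHH}) and the discontinuity is trivial. This gives the inclusion $\Hcal(u_-) \subset \{u_+ : (\ref{div0chocD}),(\ref{HHHH}),(\ref{cld})\}$. For the converse, given $u_+$ and $\sigma$ satisfying the three conditions one reverses the computation: (\ref{HHHH}) and (\ref{div0chocD}) give $\sigma[D] = -\omega\pv[H]$, while the identity $\omega\pv[E] = \mu_0\lambda_1^2(u_+)\,\omega\pv[D] = \mu_0\sigma^2\,\omega\pv[D] = \sigma\mu_0[H]$ gives the second Rankine--Hugoniot relation, so $(u_-,u_+,\sigma)$ is a genuine discontinuity, and it is not a contact discontinuity precisely when $|E_+|\neq|E_-|$ by Proposition \ref{ddc16}.

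Finally, for the nonexistence of a nontrivial Lax $2$-shock: at the left state $\omega \pv D_- = 0$, so by Proposition \ref{vpvp} the inequalities in (\ref{6vp}) are not strict, and in particular $\lambda_1(u_-) = \lambda_2(u_-)$. The second line of the Lax condition (\ref{lax-2}) for $k=2$, namely $\lambda_1(u_-) \le \sigma \le \lambda_2(u_-)$, then forces $\sigma = \lambda_1(u_-)$, hence $\sigma^2 = \lambda_1^2(u_-)$. Comparing with (\ref{cld}) gives $\lambda_1^2(u_+) = \lambda_1^2(u_-)$, and since $\lambda_1^2 = c^2(1+\epsilon_r|E|^2)^{-1}$ is a strictly decreasing function of $|E|^2$ this forces $|E_+| = |E_-|$; by the last assertion of Proposition \ref{ddc16} the discontinuity is then a contact discontinuity, not a shock, so no nontrivial Lax $2$-shock exists. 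I expect the only delicate points to be the bookkeeping in the reverse inclusion and the degenerate sub-case $\omega\pv[D]=0$; the substance of the statement, the absence of a Lax $2$-shock, then follows immediately from the eigenvalue coincidence $\lambda_1(u_-)=\lambda_2(u_-)$ guaranteed by Proposition \ref{vpvp} together with the characterization of contact discontinuities by $|E_-|=|E_+|$ in Proposition \ref{ddc16}.
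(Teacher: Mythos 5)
Your argument is correct, and since the paper states this proposition with a citation to \cite{dabh-CMS} rather than proving it, there is no in-text proof to diverge from; what you wrote is the natural direct derivation (stationary case disposed of via Proposition \ref{ddc0}, then $\omega\cdot[D]=\omega\cdot[H]=0$ and (\ref{HHHH}) from the first Rankine--Hugoniot relation, then (\ref{cld}) from the second using $\omega\pv E_-=0$ and colinearity of $E_+$ and $D_+$, and finally the eigenvalue coincidence $\lambda_1(u_-)=\lambda_2(u_-)$ forcing $|E_+|=|E_-|$ and hence, by Proposition \ref{ddc16}, a contact discontinuity rather than a Lax 2-shock).

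One small point of bookkeeping you flagged but did not quite close: for the reverse inclusion you only observe that a state satisfying (\ref{div0chocD}), (\ref{HHHH}), (\ref{cld}) fails to be a contact discontinuity ``precisely when $|E_+|\neq|E_-|$.'' To get the set equality exactly as stated, note that this exception is vacuous for nontrivial $u_+$: writing $D_-=d_0\,\omega$, condition (\ref{div0chocD}) gives $D_+\cdot\omega=d_0$, so $|D_+|^2=d_0^2+|\omega\pv D_+|^2\geq |D_-|^2$ with equality iff $\omega\pv D_+=0$, i.e.\ iff $D_+=D_-$; since $p$ is increasing, $|E_+|=|E_-|$ therefore forces $D_+=D_-$ and then $H_+=H_-$ by (\ref{HHHH}). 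Hence every nontrivial $u_+$ in the described set satisfies $|E_+|\neq|E_-|$ and is genuinely a shock, which completes the equality $\Hcal(u_-)=\{u_+:\ (\ref{div0chocD}),(\ref{HHHH}),(\ref{cld})\}$ and also gives an alternative way to conclude the nonexistence of nontrivial Lax 2-shocks at the end.
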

\subsection{Rarefaction waves}\label{rarefaction-full-kerr}
We first determine the 2-rarefactions. The rarefaction waves are computed by using the integral curves of the
eigenvectors. As the 2-characteristic field is genuinely nonlinear in $\Omega(\om)=\{u =(D,H), \; \om
\pv D \not= 0\}$, the integral curves of $r_2$ allow us to determine a
rarefaction only in this open set. Those curves are the solutions of
the following differential system:
\begin{equation}\label{rare-full-sdo}
\left\{
\begin{array}{l}
D^\prime(\xi)=-\om \pv (\om \pv D(\xi))  \\
H^\prime(\xi)=-\lambda(D(\xi)) \om \pv D(\xi) .
\end{array}
\right.
\end{equation}
If $U=(D,H)$ is a solution of this system, then $D(\xi)\cdot
\omega$ and $H(\xi)\cdot \om$ are constant:
\[ D(\xi)\cdot \omega =d_0, \qquad H(\xi)\cdot
\omega =h_0. \]
Using the identity
\[ D(\xi)=d_0 \om - \om \pv (\om \pv D(\xi)),\]
one finds for all $\xi, \, \xi_+$:
\[ \om \pv (\om \pv D(\xi)) = \om \pv (\om \pv D(\xi_+)) {\rm e}^{\xi - \xi_+ },\]
therefore $ D(\xi)$, $D(\xi_+)$ and $\omega$ are coplanar. Here, it is convenient to fix
$U(\xi_+)=u_+$ in $\Omega(\om)$. Then $U(\xi) \in \Omega(\om)$ for all
$\xi$. Let us define $\zeta$ as
\begin{equation}\label{zeta-5choc}
\zeta = - \, \frac {\omega \pv (\omega \pv D_+)}{|\omega \pv (\omega
  \pv D_+)|} \, ,
\end{equation}
and set $d_+=|\omega  \pv (\omega \pv
D_+)|>0$.
We have 
\[
D(\xi)=d_0 \om + d_+ {\rm e}^{\xi - \xi_+ }\zeta,
\]
and for $\xi_-<\xi_+$:
\[ H(\xi_-)=H_++\left( \int_{\xi_-}^{\xi_+} \lambda(d_0 \omega + d_+{\rm
  e}^{\xi - \xi_+ }\zeta)d_+{\rm e}^{\xi - \xi_+ }d \xi \right) \,  \omega \pv \zeta.
\]
Going into details, we remark that $\omega$ being fixed, for $D \in \R^6$,  if $D \cdot \omega=d_0$ and $|\omega \pv (\omega \pv D)|=d$, then $|D|=\sqrt{d_0^2+d^2}$ and by (\ref{defP}), $\lambda(D)$ is a function of $d$ and $d_0$ only, that we still denote $\lambda$:
\[
\lambda^2(d,d_0)=c^2 \frac{1+\epsilon_r\left(\nn +2 \left( \displaystyle \frac{d_0}{\epsilon_0 (1+\epsilon_r \nn)}\right)^2\right)}{(1+\epsilon_r \nn)(1+3 \epsilon_r \nn)}, \quad |E|=p \left(\sqrt{d_0^2+d^2}\right).
\]
Therefore, denoting $d_-=d_+{\rm e}^{\xi_- - \xi_+ }$ and
\begin{equation}\label{def rarefct}
\rare(d_1,d_2,d_0)=\int_{d_1}^{d_2} \lambda(s,d_0)ds,
\quad 0\leq d_1 \leq d_2 \, ,
\end{equation}
we have : 
\[
H(\xi_-)=H_++\rare(d_-,d_+,d_0)\,  \omega \pv \zeta.
\]
The function $\psi=\lambda_2\circ U$, is strictly increasing by
proposition \ref{vnl}. For $\xi_- < \xi_+$, $u_\pm=U(\xi_\pm)$, one
defines
\begin{equation}\label{rare3D}
u(y)=\left\{ \begin{array}{lr}
u_- & if \quad y \leq \lambda_2(u_-), \\
U(\psi^{-1}(y))& if \quad \lambda_2(u_-)\leq y \leq \lambda_2(u_+),
\\
u_+& if \quad y \geq \lambda_2(u_+).
\end{array}
\right.
\end{equation}
Then $u(\frac{x \cdot \om}{t})$ is a centred rarefaction wave for
system (\ref{Kerr3D}), see \cite{bressan}, \cite{serre}. 

Moreover, if $\xi_- \rightarrow - \infty$, then $d_- \rightarrow 0$,
and $H(\xi_-)$ owns also a limit, so that we can
extend the definition to left states $u_-\not \in \Omega(\om)$. As a consequence the following proposition holds:
\begin{prop}\label{prop-rare-3D}
Let $u_+=(D_+,H_+)\in \Omega(\om)$ be a given right state. Using
notation (\ref{zeta-5choc}):
\[ D_+=d_0 \om + d_+ \zeta, \qquad d_0 \in \R, \quad d_+ >0.\]
For $ 0 \leq d_- \leq d_+$, let $u_-$ be defined by
\[ 
D_-=d_0 \om + d_-\zeta, \qquad H_-=H_++ \rare(d_-,d_+,d_0)\,  \omega \pv \zeta.
\]
Then $u_-$ and $u_+$ are connected by a 2-rarefaction wave.
\end{prop}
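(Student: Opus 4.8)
The plan is to observe that this proposition is essentially a repackaging of the integral‑curve computation carried out just above, the only genuinely new point being the passage to the boundary case $d_-=0$, where $u_-\not\in\Omega(\om)$ and the characteristic field $2$ is no longer genuinely nonlinear.

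First I would treat $0<d_-<d_+$. Fix $\xi_+=0$ and set $\xi_-=\lol(d_-/d_+)<0$, and let $U=(D,H)$ be the integral curve of $r_2$ with $U(\xi_+)=u_+$; as computed above this is $D(\xi)=d_0\om+d_+\e^{\xi-\xi_+}\zeta$ and $H(\xi)=H_++\rare(d_+\e^{\xi-\xi_+},d_+,d_0)\,\om\pv\zeta$. Since $d_+\e^{\xi-\xi_+}>0$ for every $\xi$, we have $U(\xi)\in\Omega(\om)$ throughout, so Proposition \ref{vnl} applies along the whole curve and $\psi=\lambda_2\circ U$ is strictly increasing, whence $\lambda_2(u_-)<\lambda_2(u_+)$. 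With these choices $U(\xi_-)=u_-$ in the sense of the statement, and formula (\ref{rare3D}) produces a centred $2$-rarefaction connecting $u_-$ to $u_+$ (continuous, smooth off the two endpoint speeds, and a self‑similar solution of (\ref{Kerr-omeg}); see \cite{bressan},\cite{serre}). The case $d_-=d_+$ is trivial: $\rare(d_+,d_+,d_0)=0$ forces $u_-=u_+$.

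Finally, for $d_-=0$, i.e.\ $D_-=d_0\om$ and $\om\pv D_-=0$, I would let $\xi_-\to-\infty$ in the construction above, so $d_-=d_+\e^{\xi_--\xi_+}\to 0$. The key point is that $s\mapsto\lambda(s,d_0)$ is continuous and bounded on $[0,d_+]$ — in the explicit formula for $\lambda^2(d,d_0)$ the denominator $(1+\epsilon_r\nn)(1+3\epsilon_r\nn)$ stays bounded away from zero and the numerator is bounded — hence $\rare(0,d_+,d_0)=\int_0^{d_+}\lambda(s,d_0)\,ds$ is finite and $\rare(d_-,d_+,d_0)\to\rare(0,d_+,d_0)$. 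Thus $H(\xi_-)$ converges and $U(\xi_-)\to u_-$ with $u_-$ as in the statement, while $\psi(\xi_-)=-\lambda(d_-,d_0)\to-\lambda(0,d_0)=\lambda_2(u_-)$, the value of $\lambda_2$ at $u_-$ given by Proposition \ref{vpvp}, which is still $<\lambda_2(u_+)$ by monotonicity of $\psi$. Passing to the limit in (\ref{rare3D}) gives the profile equal to $u_-$ for $y\le\lambda_2(u_-)$, to $U(\psi^{-1}(y))$ for $\lambda_2(u_-)\le y\le\lambda_2(u_+)$, and to $u_+$ beyond; continuity at $y=\lambda_2(u_-)$ holds because $U(\psi^{-1}(y))\to u_-$ as $y\to\lambda_2(u_-)^+$, and this profile is a weak entropy solution of (\ref{Kerr-omeg}) in self‑similar form, so $u_-$ and $u_+$ are connected by a $2$-rarefaction in this case as well.

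The step I expect to be the only delicate one is exactly this last limit: because $u_-\in\partial\Omega(\om)$, the eigenvalues $\lambda_2,\lambda_3,\lambda_4$ collide there and genuine nonlinearity is lost, so the standard rarefaction‑existence theorem cannot be invoked at $u_-$ directly; one must check by hand, as sketched, that the integrability of $\lambda(\cdot,d_0)$ near $0$ makes the limiting fan a bona fide admissible rarefaction. Everything else is bookkeeping in the coordinates $(d_0,d_\pm)$ already introduced for the shock curves.
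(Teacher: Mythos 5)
Your proof follows essentially the same route as the paper: the explicit integral-curve computation for $r_2$ inside $\Omega(\om)$ together with formula (\ref{rare3D}), then the limit $\xi_-\to-\infty$ (i.e.\ $d_-\to 0$) to extend the construction to left states with $\om\pv D_-=0$, which is exactly how the paper obtains the proposition. One cosmetic slip in your closing remark: at $\om\pv D_-=0$ it is $\lambda_1,\lambda_2$ (and $\lambda_5,\lambda_6$) that coalesce, not $\lambda_2,\lambda_3,\lambda_4$, since $\lambda_2=-\lambda<0=\lambda_3$ there; this does not affect your argument.
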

By symmetry we deduce the 5-rarefaction waves:
\begin{prop}\label{prop-rare-3D-5}
Let $u_-=(D_-,H_-)\in \Omega(\om)$ be a given left state. Using
notation (\ref{zeta-5}):
\[ D_-=d_0 \om + d_- \zeta, \qquad d_0 \in \R, \quad d_- >0.\]
For $ 0 \leq d_+ \leq d_-$, let $u_+$ be defined by
\[ 
D_+=d_0 \om + d_+\zeta, \qquad H_+=H_-- \rare(d_+,d_-,d_0)\,  \omega \pv \zeta.
\]
Then $u_-$ and $u_+$ are connected by a 5-rarefaction wave.
\end{prop}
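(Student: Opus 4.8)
The plan is to obtain Proposition \ref{prop-rare-3D-5} from Proposition \ref{prop-rare-3D} by the left--right symmetry that, in Proposition \ref{symmetry-choc}, exchanges $2$-shocks and $5$-shocks. First I would record the rarefaction counterpart of that symmetry. Since $q$ in (\ref{defq}) is odd, so is $p=q^{-1}$, and then (\ref{defP}) shows that $\E$ is odd; moreover, since in (\ref{6vpvp}) the quantity $\lambda$ depends on $D$ only through $|E|^2$ and $(E\cdot\om)^2$, it is invariant under $D\mapsto -D$ (and so is the electromagnetic energy entropy). A direct check using these facts shows that $(D(y,t),H(y,t))\mapsto(-D(-y,t),H(-y,t))$ maps entropy solutions of (\ref{Kerr-omeg}) to entropy solutions of (\ref{Kerr-omeg}), sending the eigenpair $(\lambda_2,r_2)$ of Proposition \ref{vnl} to $(\lambda_5,r_5)$, and hence carrying a $2$-rarefaction connecting $u_-$ to $u_+$ into a $5$-rarefaction connecting $(-D_+,H_+)$ to $(-D_-,H_-)$.

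Then I would apply Proposition \ref{prop-rare-3D} to the right state $\hat u_+:=(-D_-,H_-)$. Writing $D_-=d_0\om+d_-\zeta$ with $\zeta$ as in (\ref{zeta-5}) and $d_->0$, we have $-D_-=(-d_0)\om+d_-(-\zeta)$, so in the notation of Proposition \ref{prop-rare-3D} (using (\ref{zeta-5choc})) the parameters of $\hat u_+$ are $\hat d_0=-d_0$, $\hat\zeta=-\zeta$, $\hat d_+=|\om\pv(\om\pv D_-)|=d_-$. For each $0\le\hat d_-\le d_-$, Proposition \ref{prop-rare-3D} yields a left state $\hat u_-$ with $\hat D_-=-d_0\om-\hat d_-\zeta$ and $\hat H_-=H_- - \rare(\hat d_-,d_-,-d_0)\,\om\pv\zeta$, joined to $\hat u_+$ by a $2$-rarefaction; applying the symmetry map of the previous paragraph turns this into a $5$-rarefaction with left state $(-\hat D_+,\hat H_+)=(D_-,H_-)=u_-$ and right state $\bigl(d_0\om+\hat d_-\zeta,\ H_- - \rare(\hat d_-,d_-,-d_0)\,\om\pv\zeta\bigr)$.

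It then remains to put $d_+:=\hat d_-$ and to observe that $\rare(d_+,d_-,-d_0)=\rare(d_+,d_-,d_0)$: by (\ref{def rarefct}) this reduces to $\lambda(\cdot,d_0)=\lambda(\cdot,-d_0)$, which holds because in the expression for $\lambda^2(d,d_0)$ displayed just before (\ref{def rarefct}) the argument $d_0$ enters only through $d_0^2$. We thus obtain exactly the state $u_+=(d_0\om+d_+\zeta,\ H_- - \rare(d_+,d_-,d_0)\,\om\pv\zeta)$ of the statement, for all $0\le d_+\le d_-$; the value $d_+=0$ (for which $u_+\notin\Omega(\om)$) is covered by the same limiting extension as in Proposition \ref{prop-rare-3D}, applied here on the right rather than on the left.

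Alternatively, one can argue directly by repeating the computation preceding Proposition \ref{prop-rare-3D} with $r_5$ in place of $r_2$: as $\sgn(\lambda_5)=+1$, the integral-curve system is (\ref{rare-full-sdo}) with the sign of the $D$-equation reversed, so $D\cdot\om$ and $H\cdot\om$ remain constant, $\om\pv(\om\pv D(\xi))=\om\pv(\om\pv D(\xi_-))\,\e^{\xi_- -\xi}$ (whence one now fixes the left state $\xi_-$), $\psi=\lambda_5\circ U$ is strictly increasing by Proposition \ref{vnl}, and integrating the $H$-equation again produces the function $\rare$ of (\ref{def rarefct}). In either route the only delicate point is the bookkeeping of signs and of the left/right interchange, together with the parity of $\lambda$ --- and hence of $\rare$ --- in $d_0$; everything else is the mirror image of the $2$-rarefaction analysis.
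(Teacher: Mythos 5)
Your proposal is correct and follows essentially the same route as the paper, which obtains the 5-rarefactions from the 2-rarefactions of Proposition \ref{prop-rare-3D} "by symmetry"; you simply make that symmetry explicit (oddness of $\E$, invariance of $\lambda$ under $D\mapsto-D$, evenness of $\rare$ in $d_0$, and the left/right swap as in Proposition \ref{symmetry-choc}), and your sign bookkeeping checks out. The alternative direct integration of the $r_5$ integral curves is also a valid (and equally faithful) variant of the computation preceding Proposition \ref{prop-rare-3D}.
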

\subsection{Wave curves}\label{wave-full-kerr}
As a conclusion to this paragraph, we define the 2 and 5 wave
curves. Let $\phi$ the function defined for $d_1\geq 0$, $d_2 \geq 0$ and $d_0\in \R$ by
\begin{equation}\label{w-fct-phi}
\phi(d_1,d_2,d_0)=\left\{
\begin{array}{rr}
\choc(d_1,d_2,d_0)& {\rm if} \quad d_2 \leq d_1\, , \\
-\rare(d_1,d_2,d_0)& {\rm if} \quad d_1 \leq d_2 \, .
\end{array}
\right.
\end{equation}
\begin{prop}\label{decphi}
$\phi$ is a decreasing $C^1$ function with respect to $d_2$ and for
all $d>0$, $d_0 \in \R$:
\begin{equation}\label{phienzero}
\phi(d,0,d_0)=\frac{ c d}{\sqrt{1+\epsilon_r p^2(\sqrt{d_0^2+d^2})}}\, , \quad \lim_{d_2 \rightarrow +\infty} \phi(d,d_2,d_0)=-\infty .
\end{equation}
\end{prop}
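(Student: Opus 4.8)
The plan is to fix $d_1\geq 0$ and $d_0\in\R$ and study the map $d_2\mapsto \phi(d_1,d_2,d_0)$ on $[0,+\infty)$, treating the shock branch $0\leq d_2\leq d_1$, on which $\phi=\choc(d_1,d_2,d_0)$, separately from the rarefaction branch $d_2\geq d_1$, on which $\phi=-\rare(d_1,d_2,d_0)=-\int_{d_1}^{d_2}\lambda(s,d_0)\,ds$, and then gluing the two at $d_2=d_1$, where both expressions vanish so that $\phi$ is at least continuous there.

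On the rarefaction branch the conclusions are immediate: since $\lambda(\cdot,d_0)$ is positive and smooth, $\phi$ is $C^1$ for $d_2>d_1$ with $\partial_{d_2}\phi=-\lambda(d_2,d_0)<0$. On the shock branch I would use that $f(\cdot,d_0)$ is increasing (recalled after (\ref{fonctionf})) and smooth ($q$ is a polynomial diffeomorphism, so $p=q^{-1}$ is smooth): writing $\choc(d_1,d_2,d_0)^2=(f(d_1,d_0)-f(d_2,d_0))(d_1-d_2)$ as a product of two nonnegative factors, each strictly decreasing in $d_2$ on $[0,d_1)$, shows that $\choc^2$, hence $\choc$, is positive and strictly decreasing there, and $\choc$ is $C^1$ on $[0,d_1)$ since $\choc^2>0$. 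The boundary value follows from $f(0,d_0)=0$, namely $\phi(d,0,d_0)=\choc(d,0,d_0)=\big(f(d,d_0)\,d\big)^{1/2}=cd/\sqrt{1+\epsilon_r p^2(\sqrt{d_0^2+d^2})}$ for $d>0$.

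The delicate point is $C^1$-regularity at the junction $d_2=d_1$, where $\choc$ is the square root of a function vanishing to second order. I would use the Hadamard-type factorization $f(d_1,d_0)-f(d_2,d_0)=(d_1-d_2)\,F(d_1,d_2,d_0)$, with $F(d_1,d_2,d_0)=\int_0^1\partial_d f\big(d_2+t(d_1-d_2),d_0\big)\,dt$ smooth and $F(d_1,d_1,d_0)=\partial_d f(d_1,d_0)>0$, so that $\choc(d_1,d_2,d_0)=(d_1-d_2)\sqrt{F(d_1,d_2,d_0)}$ is smooth in $d_2$ near $d_1$ with $\partial_{d_2}\choc\big|_{d_2=d_1}=-\sqrt{\partial_d f(d_1,d_0)}$. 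Matching this with the right derivative $-\lambda(d_1,d_0)$ then reduces to the key identity
\[
\partial_d f(d,d_0)=\lambda^2(d,d_0).
\]
I would check this by direct computation: with $s=\sqrt{d_0^2+d^2}$ and $e=p(s)$, so that $s=q(e)=\epsilon_0 e(1+\epsilon_r e^2)$ and $p'(s)=1/q'(e)=1/(\epsilon_0(1+3\epsilon_r e^2))$, differentiating $f(d,d_0)=c^2 d/(1+\epsilon_r e^2)$ via $\partial_d e=p'(s)\,d/s$ and simplifying with $d^2=s^2-d_0^2$ and $s/\epsilon_0=e(1+\epsilon_r e^2)$ gives
\[
\partial_d f(d,d_0)=\frac{c^2}{1+3\epsilon_r e^2}\left(1+\frac{2\epsilon_r d_0^2}{\epsilon_0^2(1+\epsilon_r e^2)^3}\right),
\]
which is exactly $\lambda^2(d,d_0)$ in (\ref{6vpvp}) once one inserts $\nn=e^2$ and $(E\cdot\omega)^2=d_0^2/(\epsilon_0^2(1+\epsilon_r e^2)^2)$. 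With the one-sided derivatives equal and the two branch-derivatives continuous up to $d_2=d_1$, the map $d_2\mapsto\phi(d_1,d_2,d_0)$ is $C^1$ on $[0,+\infty)$; being strictly decreasing on each branch and continuous at $d_1$, it is strictly decreasing, in particular decreasing.

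Finally, for $\lim_{d_2\to+\infty}\phi(d,d_2,d_0)=-\infty$ I would retain only the lower bound $\lambda^2(s,d_0)\geq c^2/(1+3\epsilon_r p^2(\sqrt{d_0^2+s^2}))$ visible in (\ref{6vpvp}) together with the elementary estimate $p(t)\leq (t/(\epsilon_0\epsilon_r))^{1/3}$ (from $q(e)=\epsilon_0(e+\epsilon_r e^3)\geq\epsilon_0\epsilon_r e^3$). These yield $\lambda(s,d_0)\geq c\,C(d_0)\,(1+s)^{-1/3}$ for $s$ large, whose integral over $[d,+\infty)$ diverges, so that $\phi(d,d_2,d_0)=-\int_d^{d_2}\lambda(s,d_0)\,ds\to-\infty$. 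The main obstacle is the regularity analysis at $d_2=d_1$: one must handle the square root of a double zero and carry out the routine but somewhat lengthy verification that $\partial_d f=\lambda^2$; the rest is elementary.
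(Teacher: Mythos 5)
Your proof is correct, and its skeleton is the same as the paper's: analyse the two branches of $\phi$ separately, glue them at $d_2=d_1$ using the identity $\partial_1 f=\lambda^2$, and lower-bound $\lambda$ to get the divergence of $\rare$. The differences are in the execution of three steps, and each of your variants is sound. First, you verify $\partial_1 f(d,d_0)=\lambda^2(d,d_0)$ by direct computation (your formula $\partial_d f=\frac{c^2}{1+3\epsilon_r e^2}\bigl(1+\frac{2\epsilon_r d_0^2}{\epsilon_0^2(1+\epsilon_r e^2)^3}\bigr)$ is exactly the expression of $\lambda^2(d,d_0)$ given in Section 2.2), whereas the paper simply quotes this as (\ref{deriv-f}) from \cite{dabh-CMS}, together with the concavity and $C^2$ regularity of $f$; your route is self-contained. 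Second, at the junction the paper differentiates $\choc$ directly for $d_2<d_1$ and computes the limit of the resulting difference-quotient expression as $d_2\to d_1^\pm$, while you factor out the double zero via $f(d_1,d_0)-f(d_2,d_0)=(d_1-d_2)F(d_1,d_2,d_0)$ and write $\choc=(d_1-d_2)\sqrt{F}$; this avoids manipulating $\partial_2\choc$ where $\choc$ vanishes and gives the one-sided derivative $-\sqrt{\partial_1 f(d_1,d_0)}=-\lambda(d_1,d_0)$ cleanly (monotonicity on the shock branch you get from the product of two positive decreasing factors instead of the sign of the derivative — also fine). Third, for $\lim_{d_2\to+\infty}\phi=-\infty$ the paper uses $\lambda^2\ge c^2/(1+\epsilon_r p^2)$ and the change of variables $w=p^2(\sqrt{s^2+d_0^2})$, whereas you use the cruder bound $\lambda^2\ge c^2/(1+3\epsilon_r p^2)$ together with $p(t)\le (t/(\epsilon_0\epsilon_r))^{1/3}$, giving $\lambda(s,d_0)\gtrsim (1+s)^{-1/3}$, whose integral diverges; this is more elementary and equally conclusive. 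In short: same strategy, with a self-contained check of the key identity, a tidier treatment of the square-root double zero, and a simpler divergence estimate.
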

\begin{proof} We have 
\[ 
\partial_2 \phi(d_1,d_2,d_0)= 
\left\{ 
\begin{array}{lr}
\displaystyle \frac{f(d_2,d_0) - f(d_1,d_0)-\partial_1
    f(d_2,d_0)(d_1-d_2)}{2 \choc(d_1,d_2,d_0)}& if \quad
d_1> d_2\, , \\ \\
-\lambda ( d_2,d_0) & if \quad
d_1 < d_2\, .
\end{array}
\right.
\]
In \cite{dabh-CMS}, we have proved that $f$ is a twice differentiable concave increasing function with respect to $d_2
\geq 0$ with
\begin{equation}\label{deriv-f}
\partial_1 f(d_2,d_0)= \lambda^2(d_2,d_0).
\end{equation}
Therefore one obtains that 
\[
\lim_{d_2 \rightarrow d_1^\pm} \partial_2 \phi(d_1,d_2,d_0)=-\lambda ( d_1,d_0 ) ,
\]
which proves that $\phi$ is $C^1$, and $\partial_2 \phi(d_1,d_2,d_0)<0$ for all $d_2$. 

The first equality in (\ref{phienzero}) is immediate. To prove the second one, we first remark that 
\[
\lambda^2(s,d_0)\geq \frac{c^2}{1+\epsilon_r p^2(\sqrt{s^2+d_0^2})}
\]
and we perform the change of variable $w=p^2(\sqrt{s^2+d_0^2})$:
\[
\rare(d,d_2,d_0) \geq \frac{\epsilon_0^2}{2}\int_{w(d)}^{w(d_2)} \frac{c}{\sqrt{1+\epsilon_r w}}\frac{1+4\epsilon_r w + 3 \epsilon_r^2 w^2}{\sqrt{\epsilon_0^2 w(1+\epsilon_r w)^2 - d_0^2} }\, dw.
\]
When $d_2$ tends to $+ \infty$, so does $w(d_2)$, hence the result.
\end{proof} 
If $u_- \not = u_+$ are connected by a Lax k-shock or a k-rarefaction wave, $u_-$ and $u_+$ are said to be connected by a k-wave. In such a case, $D_- \not = D_+$ and $D_- \cdot \om = D_+ \cdot \om $. Moreover $\omega \pv (\om \pv D_-)$  and $\om \pv (\om \pv D_+)$ are colinear. 
\begin{prop}\label{lesondes}
Let us consider $u_-$ and $u_+$ such that $D_- \not = D_+$ and $D_- \cdot \om = D_+ \cdot \om =d_0$. If $\om \pv D_+ \not = 0$, we define $\zeta$ by (\ref{zeta-5choc}). Else, $\om \pv D_- \not=0$ and we define $\zeta$ by (\ref{zeta-5}). 

$u_-$ and $u_+$ are connected by a 2-wave if there exist two distinct nonnegative real numbers $d_-$, $d_+$ such that
\begin{equation}\label{2-wave}
D_\pm=d_0 \om + d_\pm \zeta, \quad H_+ = H_- + \phi (d_-, d_+, d_0) \om \pv \zeta \, .
\end{equation}
$u_-$ and $u_+$ are connected by a 5-wave if there exist two distinct nonnegative real numbers $d_-$, $d_+$ such that
\begin{equation}\label{5-wave}
D_\pm=d_0 \om + d_\pm \zeta, \quad H_+ = H_- + \phi (d_+, d_-, d_0) \om \pv \zeta \, .
\end{equation}
\end{prop}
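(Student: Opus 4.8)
The plan is to prove Proposition \ref{lesondes} by reassembling the shock curves of Propositions \ref{chocs-vrais}, \ref{chocs-ld} and \ref{symmetry-choc} with the rarefaction curves of Propositions \ref{prop-rare-3D} and \ref{prop-rare-3D-5}, using the piecewise definition (\ref{w-fct-phi}) of $\phi$: on $\{d_2 \le d_1\}$ it is the shock amplitude $\choc$, on $\{d_1 \le d_2\}$ it is $-\rare$. I would first treat the $2$-wave case completely and then deduce the $5$-wave case: a $5$-rarefaction is covered directly by Proposition \ref{prop-rare-3D-5}, while a $5$-shock is reduced to a $2$-shock by Proposition \ref{symmetry-choc} --- the involution $(D,H)\mapsto(-D,H)$ together with the exchange of the two states --- which is also why the arguments of $\phi$ appear in the opposite order in (\ref{5-wave}) compared with (\ref{2-wave}). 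Compatibility with the explicit formulas uses that $f$, hence $\choc$ and $\rare$, depends on $d_0$ only through $d_0^2$.

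First I would fix notation. Since $|\om|=1$ one has the identity $\om\pv(\om\pv D)=(D\cdot\om)\,\om-D$, so $\om\pv(\om\pv D_\pm)=-(D_\pm-d_0\om)$. As $D_-\not=D_+$, at least one of $\om\pv D_-$, $\om\pv D_+$ is nonzero, and when both are this identity shows that (\ref{zeta-5choc}) and (\ref{zeta-5}) define the same unit vector; hence the $\zeta$ of the statement is well defined, orthogonal to $\om$, and the scalars $d_\pm=(D_\pm-d_0\om)\cdot\zeta$ are precisely those of (\ref{2-wave}). Being distinct and nonnegative, they satisfy either $d_+<d_-$ or $d_-<d_+$.

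If $d_+<d_-$, then $d_->0$, so $\om\pv D_-\not=0$ and the $\zeta$ above is the one built from $D_-$ in Proposition \ref{chocs-vrais}. Since $\choc(\cdot,\cdot,d_0)$ is symmetric in its first two arguments, (\ref{w-fct-phi}) gives $\phi(d_-,d_+,d_0)=\choc(d_+,d_-,d_0)$ when $d_+\le d_-$, so (\ref{2-wave}) is exactly the parametrization of $\Hcal_2(u_-)$ in Proposition \ref{chocs-vrais} with $0\le d_+\le d_-$: hence $u_-$ and $u_+$ are joined by a Lax $2$-shock. If $d_-<d_+$, then $d_+>0$, so $\om\pv D_+\not=0$, i.e. $u_+\in\Omega(\om)$, and $\zeta$ is the vector of (\ref{zeta-5choc}); here $\phi(d_-,d_+,d_0)=-\rare(d_-,d_+,d_0)$, so (\ref{2-wave}) reads $H_-=H_++\rare(d_-,d_+,d_0)\,\om\pv\zeta$ with $0\le d_-\le d_+$, which is the conclusion of Proposition \ref{prop-rare-3D}: hence $u_-$ and $u_+$ are joined by a $2$-rarefaction. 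In both cases $u_-$ and $u_+$ are connected by a $2$-wave, as claimed; the converse implication follows by running these identifications backwards, using in addition Proposition \ref{chocs-ld} to discard a shock with $d_-=0$. The $5$-wave case then follows by the symmetry recalled above.

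The only real difficulty is bookkeeping rather than analysis: one must keep careful track of the orientation of $\zeta$ --- that the two admissible definitions agree whenever both apply, and that the resulting $d_\pm$ are nonnegative --- and, for the $5$-wave, check that the involution $D\mapsto-D$ is compatible with the sign conventions hidden in $\phi$, $\choc$ and $\rare$; the evenness of $f$ in $d_0$ is exactly what makes that step work.
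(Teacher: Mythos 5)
Your proof is correct and takes essentially the same route the paper intends: Proposition \ref{lesondes} is stated there without proof as a direct synthesis of Propositions \ref{chocs-vrais}, \ref{symmetry-choc}, \ref{prop-rare-3D} and \ref{prop-rare-3D-5} through the piecewise definition (\ref{w-fct-phi}) of $\phi$, including the use of the evenness of $f$ in $d_0$ when reducing $5$-shocks to $2$-shocks. One small wording slip: the identity $\om\pv(\om\pv D)=(D\cdot\om)\,\om-D$ alone does not make (\ref{zeta-5choc}) and (\ref{zeta-5}) coincide for arbitrary data with $D_\pm\cdot\om=d_0$; they coincide here because the hypothesis (\ref{2-wave}) (resp.\ (\ref{5-wave})) forces $D_\pm-d_0\om$ to be nonnegative multiples of the same $\zeta$, which is all your argument actually needs.
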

\subsection{Solution of the Riemann
  problem}\label{sec-sol-3D}
Suppose that $u_\pm=(D_\pm,H_\pm)$ and $\omega\in R^3$,
$|\omega|=1$, are given. 
We look for intermediate states $u_1$, $u_*$, $u_{**}$, $u_2$ such that:
\begin{itemize}
\item $u_-$ and $u_1$ are connected by a 1-contact
  discontinuity,
\item $u_1$ and $u_*$ are connected by a 2-wave, 
\item $u_*$ and $u_{**}$ are connected by a stationary contact discontinuity, 
\item $u_{**}$ and $u_2$ are connected by a 5-wave, 
\item $u_2$ and $u_+$ are connected by a 6-contact discontinuity.
\end{itemize}
In the following we shall denote $d_0^\pm=D_\pm \cdot \om$.
\subsubsection{Necessary conditions.} 
Suppose that a solution exists. For the contact discontinuities 1 and 6, the following conditions have to be fulfilled:
\begin{equation}\label{ddc-total}
\left\{
\begin{array}{l}
D_1 \cdot \omega=D_- \cdot \omega =d_0^-,\\
D_2 \cdot \omega=D_+ \cdot \omega =d_0^+,\\
|D_1|=|D_-|,\\
|D_2|=|D_+|,
\end{array}
\right.
\end{equation}
\begin{equation}\label{ddc-total-H}
\left\{
\begin{array}{l}
H_1-H_-= \sigma_- \,  \omega \pv (D_1-D_-), \\
H_+-H_2= \sigma_+ \,  \omega \pv (D_+-D_2),
\end{array}
\right.
\end{equation}
with
\[ 
\sigma_-=\lambda_1(u_-)=\lambda_1(u_1), \quad \sigma_+=\lambda_6(u_+)=\lambda_6(u_2),
\]
that is
\begin{equation}\label{sigmapm}
\sigma_\pm=\pm c \left(1+ \epsilon_r |E_\pm| \right)^{-\ud} .
\end{equation} 
For the 2 and 5 waves we know that $D_1$, $D_*$, $\omega$
are coplanar and $D_2$, $D_{**}$, $\omega$
are coplanar. Moreover $[D] \cdot \om=0$. There exist unitary vectors $\zeta_1$, $\zeta_2$, orthogonal to
$\omega$ such that
\[ D_1=d_0 ^-\omega + d_1 \zeta_1, \quad D_*=d_0^- \omega + d_{*}
\zeta_1\]
and
\[ D_2=d_0^+ \omega + d_2 \zeta_2, \quad   D_{**}=d_0^+ \omega +
d_{**} \zeta_2\]
and $d_1$, $d_*$, $d_{**}$, $d_2$ are non negative.

The stationary contact discontinuity is defined by conditions
(\ref{rhstat}). One has 
\[ E_*=e_0^* \om + e_* \zeta_1, \quad E_{**}=e_0^{**} \om + e_{**}
\zeta_2\, ,\]
where 
\[ 
e_*=\frac{d_*}{\epsilon_0(1+\epsilon_r p^2(|D_*|))}, \quad
e_{**}=\frac{d_{**}}{\epsilon_0(1+\epsilon_r p^2(|D_{**}|))}\, .
\]
Therefore $e_* \, \om \pv \zeta_1=e_{**}\, \om \pv \zeta_2$. Hence
either $e_* =e_{**}=0$ or those quantities are both positive and
$\zeta_1=\zeta_2$. The first case occurs if and only if $\om \pv
D_*=\om \pv D_{**}=0$. In the second case we have $e_*=e_{**}$, which
also reads as 
\begin{equation}\label{ddcstat-bis}
f(d_*,d_0^-)=f(d_{**},d_0^+).
\end{equation}
\\[12pt]
{\bf First case:} $\om \pv D_*=\om \pv D_{**}=0$.

In that case, $D_*=d_0^- \om$, $D_{**}=d_0^+ \om$. $u_1$ and $u_*$ are
the left and right states of a 2-shock propagating with speed
\[
\sigma_2=-\sqrt{\frac{f(d_1,d_0^-)-f(0,d_0^-)}{d_1}}=\sigma_- \, .
\]
In the same way, $u_{**}$ and $u_2$ are the left and right states of a
5-shock propagating with speed $\sigma_+$. Consequently the contact
discontinuities merge with the shocks, see Figure \ref{dege-cour}.
\begin{figure}
\centering
\includegraphics[width=0.5\linewidth,angle=0]{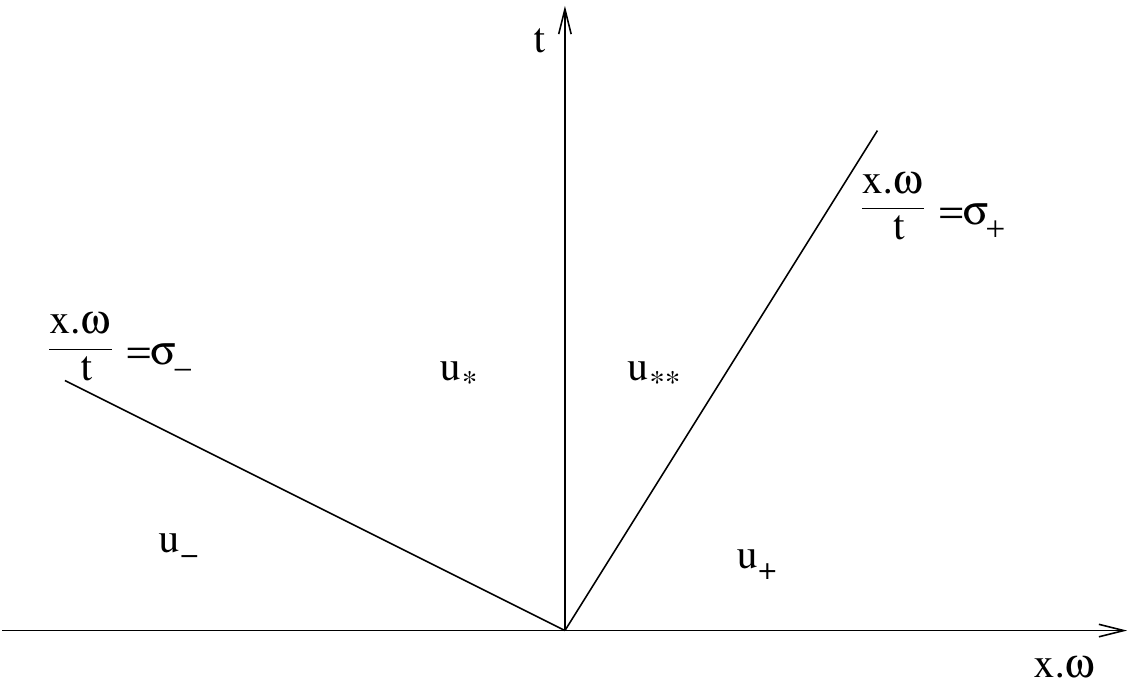}
\caption{Case $\om \pv D_*=\om \pv D_{**}=0$.}
\label{dege-cour}
\end{figure}
We have the following
relations:
\[
\left\{
\begin{array}{l}
H_1-H_-=\sigma_- \omega \pv (D_1-D_-),\\
H_*-H_1=-\sigma_- \omega \pv D_1\\
H_2-H_{**}=\sigma_+ \omega \pv D_2\\
H_+-H_2=\sigma_+ \omega \pv (D_+-D_2).
\end{array}
\right.
\]
Let us denote
\begin{equation}\label{def-V}
V=\om \pv \left( H_+-H_--\omega \pv (\sigma_+ D_+ -\sigma_- D_-)\right).
\end{equation}
Using the second relation of (\ref{rhstat}):
\begin{equation}\label{patho}
V=0,
\end{equation}
and
\begin{equation}\label{patho-H}
H_*=H_--\omega \pv \sigma_- D_-  \, , \quad H_{**}=H_+-\omega \pv \sigma_+ D_+  \, .
\end{equation}
If $D_- \pv \om=0$ then $u_-=u_*$. Else one has $D_-=d_0^- \om + d_-
\zeta$ with $\zeta$ defined by (\ref{zeta-5}) so
\[ H_*= H_- - \sigma_-  \, d_- \, \om \pv \zeta=H_- + \phi(d_-,0,d_0^-)
\om \pv \zeta. \]
This proves that $u_-$ and $u_*$ are connected by a Lax 2-shock. 

In the same way, if  $D_+\pv \om=0$ then $u_+=u_{**}$, else $u_+$ and
$u_{**}$ are connected by a Lax 5-shock. 
\\[12pt]
{\bf Second case:} $D_* \pv \om \not=0$ and $D_{**}\pv \om \not=0$.

In this case, $\zeta_1=\zeta_2=\zeta$ and 
\begin{equation}\label{gene-3D-D-12}
D_1=d_0^- \omega + d_1 \zeta, \quad D_2=d_0^+ \omega + d_2 \zeta  ,
\end{equation}
\begin{equation}\label{gene-3D-D-star}
D_*=d_0^- \omega + d_* \zeta, \quad D_{**}=d_0^+ \omega + d_{**} \zeta, 
\end{equation}
with $d_1\geq 0$, $d_*>0$, $d_{**}>0$, $d_2\geq 0$.  Let us denote
\begin{equation}\label{dh}
d=D \cdot \zeta, \quad h=H \cdot (\om \pv \zeta).
\end{equation} 
By
(\ref{ddc-total}-\ref{ddc-total-H}):
\begin{equation}\label{d1d2}
d_1= | \om \pv (\om \pv D_-)|, \quad  d_2= | \om \pv (\om \pv D_+)|, 
\end{equation}
and
\[
h_1= h_- + \sigma_- (d_1-d_-), \quad h_2 = h_+ + \sigma_+(d_2-d_+). 
\]
By proposition \ref{lesondes}, for the 2-wave curve connecting $u_1$ and $u_*$:
\begin{equation}\label{2W}
H_* - H_1= \phi (d_1,d_*,d_0^-) \om \pv \zeta \, .
\end{equation}
In the same way:
\begin{equation}\label{5W}
H_2-H_{**}  = \phi (d_2,d_{**},d_0^+) \om \pv \zeta \, .
\end{equation}
By (\ref{rhstat}), $h_*=h_{**}$ and 
\begin{equation}\label{2-5-ondes}
h_*=h_1+\phi(d_1,d_*,d_0^-) =h_2-\phi(d_2,d_{**},d_0^+).
\end{equation}
Therefore, using (\ref{ddcstat-bis}), we see that $d_*$ and $d_{**}$ are solution of the two by two system:
\begin{equation}\label{223D}
\left\{
\begin{array}{l}
f(d_*,d_0^-)=f(d_{**},d_0^+)\, ,\\ \\
h_1+\phi(d_1,d_*,d_0^-) =h_2-\phi(d_2,d_{**},d_0^+).
\end{array}
\right.
\end{equation}
As $\phi$ is decreasing and $d_*$,  $d_{**}$ are positive:
\begin{equation}\label{CNsigne}
 \phi(d_1,d_*,d_0^-) +\phi(d_2,d_{**},d_0^+) < \phi(d_1,0,d_0^-) +\phi(d_2,0,d_0^+)= \sigma_+
 d_2-\sigma_- d_1 .
\end{equation}
This inequality is useful to determine $\zeta$. As a matter of fact, using (\ref{ddc-total-H}), we have also
\[
\left\{
\begin{array}{l}
H_*=H_- + \sigma_- \om \pv (D_1-D_-) + \phi (d_1,d_*,d_0^-) \om \pv \zeta ,\\
H_{**}=H_+ - \sigma_+ \om \pv (D_+-D_2)-\phi (d_2,d_{**},d_0^+) \om \pv \zeta \, .
\end{array}
\right.
\]
Again by (\ref{rhstat}), using notation (\ref{def-V}):
\[ V= \left( \sigma_+ d_2 -\sigma_- d_1 -\phi (d_1,d_*,d_0^-) -\phi (d_2,d_{**},d_0^+) \right) \zeta .\]
Therefore $V \not= 0$ and 
\begin{equation}\label{def-zeta}
\zeta=\frac{V}{|V|}.
\end{equation}
We sum up the results in the following proposition.
\begin{prop}\label{3D-CN}
Consider $u_-$, $u_+$ such that the Riemann problem for system
(\ref{Kerr3D}) has a solution which is a superposition of simple
waves. Let $V$ be defined by (\ref{def-V}). Then only the following two cases occur:

1) $V=0$, $u_-$ and $u_*$ are connected by a Lax 2-shock propagating
with velocity $\sigma_-$,
$u_+$ and $u_{**}$ are connected by a Lax 5-shock propagating
with velocity $\sigma_+$, 
$D_*=(D_- \cdot \om)\om$, $D_{**}=(D_+ \cdot \om)\om$, $H_*$ and
$H_{**}$ are given by (\ref{patho-H}).

2) $V \not=0$, $\zeta$ is defined by (\ref{def-zeta}), $u_1$ and $u_2$
are determined by conditions (\ref{ddc-total-H}), (\ref{sigmapm}), (\ref{gene-3D-D-12}), (\ref{d1d2}) and
$u_*$, $u_{**}$ are determined by (\ref{gene-3D-D-star}), (\ref{2W}-\ref{5W}) and the solution of system (\ref{223D}).
\end{prop}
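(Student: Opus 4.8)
The plan is to start from the hypothesis that the Riemann problem admits a solution of the prescribed form --- a $1$-contact discontinuity $(u_-,u_1)$, then a $2$-wave $(u_1,u_*)$, then a stationary contact discontinuity $(u_*,u_{**})$, then a $5$-wave $(u_{**},u_2)$, then a $6$-contact discontinuity $(u_2,u_+)$ --- and to read off, wave by wave, the constraints it imposes, keeping track of the normal and tangential parts of the fields. First I would apply Proposition~\ref{ddc16} to the two outer contact discontinuities: this yields at once the norm and normal-component relations (\ref{ddc-total}), the $H$-jump relations (\ref{ddc-total-H}), and the speeds $\sigma_\pm$ given explicitly by (\ref{sigmapm}). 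Next, for the $2$-wave and the $5$-wave, Proposition~\ref{lesondes} (relying in turn on Propositions~\ref{chocs-vrais}, \ref{chocs-ld}, \ref{prop-rare-3D}, \ref{prop-rare-3D-5}) tells me that $D_1,D_*,\omega$ are coplanar and $D_2,D_{**},\omega$ are coplanar, with $[D]\cdot\omega=0$ along each of them; hence there are unit vectors $\zeta_1,\zeta_2$ orthogonal to $\omega$ with $D_1=d_0^-\omega+d_1\zeta_1$, $D_*=d_0^-\omega+d_*\zeta_1$, $D_2=d_0^+\omega+d_2\zeta_2$, $D_{**}=d_0^+\omega+d_{**}\zeta_2$, all the $d$'s being nonnegative.

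The dichotomy then comes from the middle, stationary contact discontinuity, characterized by (\ref{rhstat}): $\omega\times[H]=0$ and $\omega\times[E]=0$. Writing $E_*$ and $E_{**}$ in the orthonormal frames $(\omega,\zeta_1)$ and $(\omega,\zeta_2)$, the condition $\omega\times[E]=0$ reads $e_*\,\omega\times\zeta_1=e_{**}\,\omega\times\zeta_2$ with $e_*,e_{**}\ge0$, which forces either $e_*=e_{**}=0$, equivalently $\omega\times D_*=\omega\times D_{**}=0$ (case~1), or $e_*=e_{**}>0$ together with $\zeta_1=\zeta_2$ (case~2). In case~1 one has $D_*=d_0^-\omega$, $D_{**}=d_0^+\omega$, hence $d_*=d_{**}=0$; I would then note that the $2$-wave from $u_1$ to $u_*$ cannot be a rarefaction, since the right state of a $2$-rarefaction must lie in $\Omega(\omega)$ whereas $\omega\times D_*=0$, so it is trivial or a $2$-shock. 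A direct computation with the definition (\ref{fonctionf}) of $f$, the identity $d_1=|\omega\times(\omega\times D_-)|$ coming from (\ref{ddc-total}), and $|E|=p(|D|)$ identifies its speed with $\sigma_-$; by the symmetry Proposition~\ref{symmetry-choc} (and Proposition~\ref{prop-rare-3D-5}) the $5$-wave is likewise a $5$-shock of speed $\sigma_+$. Inserting these into the $H$-jump relations and using $\omega\times[H]=0$ across the stationary discontinuity collapses everything to $V=0$, that is (\ref{patho}), and to the formulas (\ref{patho-H}) for $H_*$ and $H_{**}$; finally, invoking Proposition~\ref{lesondes} with $d_*=0$ shows that $u_-$ and $u_*$ are joined by a Lax $2$-shock of speed $\sigma_-$, or coincide when $\omega\times D_-=0$.

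In case~2, with $\zeta_1=\zeta_2=\zeta$ and $d_*,d_{**}>0$, I would combine the wave relations (\ref{2W}), (\ref{5W}) from Proposition~\ref{lesondes} with the two stationary-discontinuity equations: $\omega\times[E]=0$ (through $e_*=e_{**}$) rewrites as $f(d_*,d_0^-)=f(d_{**},d_0^+)$, i.e.\ (\ref{ddcstat-bis}), while $\omega\times[H]=0$ (through $h_*=h_{**}$) gives the second line of (\ref{223D}); so $(d_*,d_{**})$ solves the $2\times2$ system (\ref{223D}). Monotonicity of $\phi$ in its second argument (Proposition~\ref{decphi}) together with the boundary values $\phi(d_1,0,d_0^-)=-\sigma_-d_1$ and $\phi(d_2,0,d_0^+)=\sigma_+d_2$ (from (\ref{phienzero}) and (\ref{sigmapm})) yields the strict inequality (\ref{CNsigne}). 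Re-expressing $H_*$ and $H_{**}$ from the outer relations (\ref{ddc-total-H}) and substituting into $\omega\times[H]=0$ across the stationary discontinuity produces $V=\bigl(\sigma_+d_2-\sigma_-d_1-\phi(d_1,d_*,d_0^-)-\phi(d_2,d_{**},d_0^+)\bigr)\zeta$, whose scalar coefficient is strictly positive by (\ref{CNsigne}); hence $V\ne0$ and $\zeta=V/|V|$, and then $u_1,u_2$ are recovered from (\ref{ddc-total-H}), (\ref{sigmapm}), (\ref{gene-3D-D-12}), (\ref{d1d2}) and $u_*,u_{**}$ from (\ref{gene-3D-D-star}), (\ref{2W})--(\ref{5W}) and the solution of (\ref{223D}). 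The step I expect to be the main obstacle is the case~1 bookkeeping --- showing that the tangential wave between $u_1$ and $u_*$ is forced to be a $2$-shock of exactly speed $\sigma_-$, so that it genuinely merges with the $1$-contact discontinuity --- together with pinning down the sign of the coefficient of $\zeta$ in $V$ in case~2; both rest on the monotonicity and concavity of $f$ and $\phi$ established earlier and on careful tracking of the orientation of $\zeta$.
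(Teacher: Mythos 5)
Your proposal is correct and follows essentially the same route as the paper: outer contact discontinuities via Proposition~\ref{ddc16}, the dichotomy $e_*=e_{**}=0$ versus $e_*=e_{**}>0$ with $\zeta_1=\zeta_2$ coming from the stationary discontinuity conditions (\ref{rhstat}), the merging of the 2- and 5-shocks with the outer contacts in case 1, and the identity $V=(\sigma_+d_2-\sigma_-d_1-\phi(d_1,d_*,d_0^-)-\phi(d_2,d_{**},d_0^+))\zeta$ with (\ref{CNsigne}) in case 2. The only (welcome) addition is that you make explicit why the 2-wave ending at $u_*$ with $\om\pv D_*=0$ cannot be a rarefaction, a point the paper leaves implicit.
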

\subsubsection{Sufficient conditions}
Consider an initial Riemann data. We consider two cases according as
$V=0$ or not.
\\[12pt]
{\bf First case: $V=0$.} We define $D_*=(D_- \cdot \om)\om$,
$D_{**}=(D_+ \cdot \om)\om$, $H_*$ and $H_{**}$ by (\ref{patho-H}). It
is easy to see that $u_-$ and $u_*$ are connected by a Lax 2-shock,
$u_*$ and $u_{**} $ are connected by a stationary contact
discontinuity, $u_{**}$ and $u_+$ are connected by a Lax 5-shock, so
we have constructed the solution of the problem.
\\[12pt]
{\bf Second case: $V \not =0$.} 
We define $\zeta$ by (\ref{def-zeta}). Then we set
\[ 
D_\pm=d_0^\pm \om + d_\pm \zeta + d^\prime_\pm \om \pv \zeta, \quad H_\pm=h_0^\pm \om + h_\pm ^\prime \zeta + h_\pm \om \pv \zeta,
\]
so that
\[
V=(-h_+ + h_- +\sigma_+ d_+ - \sigma_- d_-)\zeta +(h_+^\prime - h_-^\prime + \sigma_+ d_+^\prime - \sigma_- d_-^\prime) \om \pv \zeta.
\]
Hence we can state:
\begin{lemma}\label{toto}
The two following properties hold:
\begin{equation}\label{toto1}
-h_+ + h_- +\sigma_+ d_+ - \sigma_- d_- >0,
\end{equation}
\begin{equation}\label{toto2}
h_+^\prime - h_-^\prime + \sigma_+ d_+^\prime - \sigma_- d_-^\prime =0.
\end{equation}
\end{lemma}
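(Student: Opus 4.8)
The statement to prove is Lemma \ref{toto}: given $V \neq 0$ with $\zeta = V/|V|$, the two scalar identities \eqref{toto1} and \eqref{toto2} hold, where the quantities come from decomposing $D_\pm$ and $H_\pm$ in the orthonormal frame $\{\om, \zeta, \om \pv \zeta\}$. The plan is to trace back through the necessary-conditions analysis already carried out in the ``Second case'' above, and observe that the reasoning there did not in fact use the existence of a full Riemann solution — only that $\zeta$ is \emph{defined} by \eqref{def-zeta}, i.e.\ that $V$ is parallel to $\zeta$ and points in the positive $\zeta$ direction. Concretely, I would start from the explicit expansion
\[
V=(-h_+ + h_- +\sigma_+ d_+ - \sigma_- d_-)\zeta +(h_+^\prime - h_-^\prime + \sigma_+ d_+^\prime - \sigma_- d_-^\prime) \om \pv \zeta,
\]
which is a direct consequence of the definition \eqref{def-V} of $V$ together with the chosen decompositions of $D_\pm$ and $H_\pm$; note that the $\om$-component of $V$ vanishes automatically since $V = \om \pv (\cdots)$ is orthogonal to $\om$.

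Next I would exploit the normalization $\zeta = V/|V|$. Since $V$ is by construction a nonzero scalar multiple of $\zeta$, its $\om \pv \zeta$-component must vanish: this is exactly \eqref{toto2}. And since $\zeta = V/|V|$ means $V = |V|\,\zeta$ with $|V| > 0$, the $\zeta$-component of $V$ equals $|V| > 0$: this is exactly \eqref{toto1}. In other words, both identities are immediate once one writes $V$ in the frame $\{\om, \zeta, \om \pv \zeta\}$ and recalls how $\zeta$ was chosen; there is essentially no computation beyond bookkeeping with the orthonormal decomposition.

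The only point requiring a little care — and the place I would expect a referee to look — is the consistency of the frame: one must check that $\{\om, \zeta, \om \pv \zeta\}$ is indeed an orthonormal basis of $\R^3$, i.e.\ that $\zeta \perp \om$ and $|\zeta| = 1$, so that the stated decompositions of $D_\pm$, $H_\pm$ are legitimate and unique. This holds because $V = \om \pv (\,\cdots)$ is orthogonal to $\om$ by the elementary property of the cross product, hence so is $\zeta = V/|V|$, and $|\zeta|=1$ by definition; then $\om \pv \zeta$ is a unit vector orthogonal to both. Thus the lemma is really just the statement ``$V$ has zero $\om$-component, zero $(\om\pv\zeta)$-component, and strictly positive $\zeta$-component,'' repackaged; the first is automatic, and the latter two are precisely the meaning of $\zeta = V/|V|$ in the case $V \neq 0$. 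I would write this up in a few lines.
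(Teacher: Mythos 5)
Your proof is correct and is essentially the paper's own argument: the lemma is stated there as an immediate consequence of the displayed expansion of $V$ in the orthonormal frame $\{\om,\zeta,\om\pv\zeta\}$ together with the definition $\zeta=V/|V|$, so the $\om\pv\zeta$-component must vanish (giving (\ref{toto2})) and the $\zeta$-component equals $|V|>0$ (giving (\ref{toto1})). Your added check that $\zeta\perp\om$ and $|\zeta|=1$, so the decomposition is legitimate, is exactly the right bookkeeping and nothing more is needed.
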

Using notations (\ref{sigmapm}), (\ref{dh}), we define $u_1$, $u_2$ by (\ref{gene-3D-D-12}), (\ref{d1d2}) and (\ref{ddc-total-H}). Clearly, $u_-$ and $u_1$ are connected by a 1-contact discontinuity, $u_+$ and $u_2$ are connected by a 6-contact discontinuity. Then we solve system (\ref{223D}):
\begin{lemma}\label{fctimpl} 
The system (\ref{223D}) has a unique solution $(d_*,d_{**}) \in \R_+^2$.
\end{lemma}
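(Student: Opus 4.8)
The plan is to reduce system (\ref{223D}) to a single scalar equation by eliminating $d_{**}$ through the first line, and then to solve that scalar equation by a monotonicity argument, the strict inequality that makes it solvable being supplied by Lemma \ref{toto}.

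First I would record the properties of $f$ that are needed. For fixed $d_0$, formula (\ref{fonctionf}) rewrites (using $1+\epsilon_r p^2(s)=s/(\epsilon_0 p(s))$, which comes from $s=\epsilon_0 p(s)(1+\epsilon_r p^2(s))$) as $f(d,d_0)=\mu_0^{-1}d\,p(\sqrt{d_0^2+d^2})/\sqrt{d_0^2+d^2}$; since $p$ is an increasing bijection of $\R$ with $p(s)\to+\infty$, the map $d\mapsto f(d,d_0)$ is a strictly increasing $C^1$ function on $[0,+\infty)$ with $f(0,d_0)=0$ and $f(d,d_0)\to+\infty$ as $d\to+\infty$, hence a $C^1$-diffeomorphism of $[0,+\infty)$ onto itself (essentially already in \cite{dabh-CMS}). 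Therefore $g:=\bigl(f(\cdot,d_0^+)\bigr)^{-1}\circ f(\cdot,d_0^-)$ is a well-defined $C^1$-diffeomorphism of $[0,+\infty)$ onto itself, with $g(0)=0$, $g'>0$ and $g(d_*)\to+\infty$ as $d_*\to+\infty$, and the first equation of (\ref{223D}) is equivalent to $d_{**}=g(d_*)$. It thus suffices to prove that
\[
\Psi(d_*):=\phi(d_1,d_*,d_0^-)+\phi\bigl(d_2,g(d_*),d_0^+\bigr)=h_2-h_1
\]
has a unique solution $d_*\in[0,+\infty)$, for then $d_{**}=g(d_*)$ is the unique compatible value.

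Next I would analyze $\Psi$. By Proposition \ref{decphi}, $\phi$ is $C^1$ and strictly decreasing in its middle argument for all admissible values of the other two; combined with $g$ strictly increasing this makes $\Psi$ continuous and strictly decreasing on $[0,+\infty)$, hence injective. Its endpoint behaviour is $\Psi(0)=\phi(d_1,0,d_0^-)+\phi(d_2,0,d_0^+)=\sigma_+ d_2-\sigma_- d_1$ (by the first part of (\ref{phienzero}) together with (\ref{sigmapm}), exactly as already noted in (\ref{CNsigne})), and $\lim_{d_*\to+\infty}\Psi(d_*)=-\infty$ (by the second part of (\ref{phienzero}) and $g(d_*)\to+\infty$). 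So $\Psi$ maps $[0,+\infty)$ bijectively onto $(-\infty,\ \sigma_+ d_2-\sigma_- d_1]$, and the intermediate value theorem gives a unique root $d_*$ precisely when $h_2-h_1\le\sigma_+ d_2-\sigma_- d_1$.

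Finally, it remains to verify this inequality, which is where Lemma \ref{toto} enters. From the construction of $u_1,u_2$ in the case $V\neq 0$ (relations (\ref{ddc-total-H}), (\ref{gene-3D-D-12})) one reads off $h_1=h_-+\sigma_-(d_1-d_-)$ and $h_2=h_++\sigma_+(d_2-d_+)$, whence
\[
(\sigma_+ d_2-\sigma_- d_1)-(h_2-h_1)=-h_+ +h_- +\sigma_+ d_+-\sigma_- d_-,
\]
which is strictly positive by (\ref{toto1}). Hence $h_2-h_1<\Psi(0)$, so $\Psi(d_*)=h_2-h_1$ has a unique solution $d_*$, which moreover satisfies $d_*>0$, and then $d_{**}=g(d_*)>0$; uniqueness of $(d_*,d_{**})$ in $\R_+^2$ is immediate since any solution must have $d_{**}=g(d_*)$ and $\Psi$ is injective. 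The only steps requiring a little care are the surjectivity of $f(\cdot,d_0)$ onto $[0,+\infty)$ (so that $g$ is globally defined) and the identification $\Psi(0)=\sigma_+ d_2-\sigma_- d_1$; once Lemma \ref{toto} is granted, the remainder is pure monotonicity, so the substance of the lemma is really that earlier estimate.
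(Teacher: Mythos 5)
Your proof is correct and follows essentially the same route as the paper: you reduce (\ref{223D}) to a scalar equation in $d_*$ via the increasing map $G=f_+^{-1}\circ f_-$, then conclude by the monotonicity and limits of $\phi$ from Proposition \ref{decphi} together with the inequality $h_2-h_1<\phi(d_1,0,d_0^-)+\phi(d_2,0,d_0^+)$ supplied by (\ref{toto1}). The extra details you include (the explicit form of $f$ showing it is a bijection onto $\R_+$, and the algebra identifying the deficit with $-h_++h_-+\sigma_+d_+-\sigma_-d_-$) are consistent with what the paper cites from \cite{dabh-CMS} and with (\ref{CNsigne}).
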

\begin{proof}
The values of $d_0^\pm$, $d_1$, $d_2$, $h_1$, $h_2$ are fixed. Denoting $f_\pm=f(\cdot,d_0^\pm)$, we know that $f_+$ and $f_-$ are increasing, $C^1$-diffeomorphisms from $\R$ to $\R$ such that $f_-(0)=f_+(0)=0$, see \cite{dabh-CMS}. Hence we can define $G=f_+^{-1} \circ f_-$, which is a $C^1$ increasing one-to-one function such that $G(0)=0$. We only need to define $G$ on $\R_+$: $G( \R_+)=\R_+$. 

Solving system (\ref{223D}) is equivalent to find $d_*\in \R_+$ such that
\begin{equation}\label{toto100}
h_2-h_1= \phi(d_1,d_*,d_0^-)+\phi(d_2,G(d_*),d_0^+).
\end{equation}
We have
\[
h_2-h_1=h_+ - h_- -\sigma_+ d_++ \sigma_- d_- + \phi(d_1,0,d_0^-)+\phi(d_2,0,d_0^+).
\]
By using (\ref{toto1}), we deduce 
\[
h_2 - h_1 < \phi(d_1,0,d_0^-)+\phi(d_2,0,d_0^+).
\]
Hence by proposition \ref{decphi}, the solution $d_*$ of (\ref{toto100}) exists and is unique. Setting $d_{**}=G(d_*)$, $(d_*,d_{**})$ is the unique solution of system (\ref{223D}).
\end{proof}
Let $(d_* , d_{**})$ be the solution of system  (\ref{223D}). We define $u_*$, $u_{**}$ by (\ref{gene-3D-D-star}), (\ref{2W}), (\ref{5W}). By construction, $u_1$ and $u_*$ are connected by a 1-wave, $u_2$ and $u_{**}$ are connected by a 5-wave. 

It remains to verify that $u_*$ and $u_{**}$ are connected by a stationary contact discontinuity. First it is easy to see that $[H \pv \om]=0$ if and only if $|V|=-h_+ + h_- +\sigma_+ d_+ - \sigma_- d_-$, which a consequence of lemma \ref{toto}. Moreover $[E \pv \om]=0$ if and only if (\ref{ddcstat-bis}) is satisfied, which is true because $(d_*,d_{**})$ is solution of system (\ref{223D}). 

We sum up the results in the following theorem:
\begin{theorem}\label{exis-sol}
Let $u_-$, $u_+$ be a Riemann data for system (\ref{Kerr3D}) in the direction $\omega$. The Riemann problem has a unique solution in the class of the functions which are superpositions of simple waves as detailed at beginning of section \ref{sec-sol-3D}. Let $V$ be the vector defined in (\ref{def-V}).

If $V=0$, then the solution is the superposition of a  Lax 2-shock, a stationary contact discontinuity and a Lax 5-shock.

If $V\not = 0$, then the solution is the superposition of a  1-contact discontinuity, a 2-wave (Lax shock or rarefaction), a stationary contact discontinuity, a 5-wave (Lax shock or rarefaction) and a 6-contact discontinuity.

In each case, the solution is constructed as in proposition \ref{3D-CN}.
\end{theorem}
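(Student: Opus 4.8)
The plan is to read the theorem off from the necessary and sufficient conditions already assembled in this section; the only genuinely new task is to check that the elementary pieces concatenate into a bona fide self-similar entropy solution.

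\emph{Uniqueness within the class.} I would first observe that any solution of the announced form has its intermediate states completely determined by $u_\pm$ and $\om$. Indeed, the computation leading to Proposition~\ref{3D-CN} shows that the vector $V$ of (\ref{def-V}), which depends only on the data, dichotomizes the configuration. If $V=0$ we are in case~1) of Proposition~\ref{3D-CN}: $D_*$, $D_{**}$, and then $H_*$, $H_{**}$ via (\ref{patho-H}), are uniquely fixed, while $u_1=u_-$ and $u_2=u_+$. If $V\neq0$ we are in case~2): $\zeta=V/|V|$ is forced by (\ref{def-zeta}); the $1$- and $6$-contact relations (\ref{ddc-total-H}), (\ref{gene-3D-D-12}), (\ref{d1d2}) together with $\sigma_-=\lambda_1(u_-)=\lambda_1(u_1)$, $\sigma_+=\lambda_6(u_+)=\lambda_6(u_2)$ pin down $u_1$, $u_2$; and $(d_*,d_{**})$ must solve the $2\times2$ system (\ref{223D}), which has a unique solution in $\R_+^2$ by Lemma~\ref{fctimpl}. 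Through (\ref{gene-3D-D-star}), (\ref{2W}), (\ref{5W}) this determines $u_*$, $u_{**}$. Hence at most one solution of the prescribed form exists.

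\emph{Existence.} It then remains to verify that these prescribed states do constitute a solution. In the case $V=0$, set $D_*=(D_-\cdot\om)\om$, $D_{**}=(D_+\cdot\om)\om$ and define $H_*$, $H_{**}$ by (\ref{patho-H}); Propositions~\ref{chocs-vrais}/\ref{chocs-ld} make $(u_-,u_*)$ a Lax $2$-shock of speed $\sigma_-$ and $(u_{**},u_+)$ a Lax $5$-shock of speed $\sigma_+$, while $E_*$, $E_{**}$ being colinear to $\om$ gives $\om\pv E_*=\om\pv E_{**}=0$, and $V=0$ forces $\om\pv(H_{**}-H_*)=0$, so $(u_*,u_{**})$ is a stationary contact discontinuity by Proposition~\ref{ddc0}. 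In the case $V\neq0$, set $\zeta=V/|V|$ and use Lemma~\ref{toto} to split $V$ along $\zeta$ and $\om\pv\zeta$: (\ref{gene-3D-D-12}), (\ref{d1d2}), (\ref{ddc-total-H}) define $u_1$, $u_2$, joined to $u_-$ and $u_+$ respectively by a $1$- and a $6$-contact discontinuity (Proposition~\ref{ddc16}); Lemma~\ref{fctimpl} supplies the unique $(d_*,d_{**})\in\R_+^2$ solving (\ref{223D}); and (\ref{gene-3D-D-star}), (\ref{2W}), (\ref{5W}) define $u_*$, $u_{**}$, joined to $u_1$, $u_2$ by a $2$-wave and a $5$-wave (Proposition~\ref{lesondes}). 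The last point, that $(u_*,u_{**})$ is a stationary contact, is checked by splitting condition (\ref{rhstat}): $[\om\pv H]=0$ is equivalent to $|V|=-h_++h_-+\sigma_+d_+-\sigma_-d_-$, which holds by (\ref{toto1})--(\ref{toto2}) and the choice of $\zeta$, while $[\om\pv E]=0$ is equivalent to (\ref{ddcstat-bis}), which holds because $(d_*,d_{**})$ solves (\ref{223D}).

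\emph{Gluing, and the main obstacle.} To conclude that the concatenation is a genuine self-similar entropy solution, I would verify the ordering of wave speeds so that the profiles do not overlap: by Proposition~\ref{vpvp}, $\sigma_-=\lambda_1\le-\lambda=\lambda_2\le0\le\lambda=\lambda_5\le\lambda_6=\sigma_+$; a Lax $2$-shock has speed in $[\lambda_2(u_1),\lambda_2(u_-)]\subset[\sigma_-,0]$ and a $2$-rarefaction fans between $\lambda_2(u_-)\le0$ and $\lambda_2(u_*)\le0$, symmetrically on the $5$-side, and the stationary contact sits at speed $0$. Each elementary piece being an entropy-admissible (Lax) wave, the resulting $u(x,t)=V(x\cdot\om/t)$ is the sought entropy solution, built exactly as recorded in Proposition~\ref{3D-CN}. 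The genuinely hard analytic step has already been isolated as Lemma~\ref{fctimpl}, which rests on the monotonicity and range of $\phi$ established in Proposition~\ref{decphi}; within the theorem itself the only delicate bookkeeping is the component-wise verification that $(u_*,u_{**})$ is stationary when $V\neq0$, and making sure the degenerate subcases ($\om\pv D_\pm=0$, coincident eigenvalues) do not spoil the speed ordering.
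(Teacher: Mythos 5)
Your proposal is correct and follows essentially the same route as the paper: uniqueness is read off from the necessary conditions collected in Proposition \ref{3D-CN} (with the dichotomy $V=0$ versus $V\neq 0$), and existence is obtained by assembling the waves via Lemma \ref{toto}, Lemma \ref{fctimpl} and the final check that $(u_*,u_{**})$ satisfies (\ref{rhstat}), exactly as in the paper's ``sufficient conditions'' paragraph. Your added verification of the wave-speed ordering (modulo the slightly misstated Lax interval, which should involve $\lambda_1(u_1)\le\sigma\le\lambda_2(u_1)$ and $\lambda_2(u_*)\le\sigma\le 0$) is a harmless supplement that the paper leaves implicit.
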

\subsection{The Transverse Magnetic case: $6 \times 6$ viewpoint}\label{66vpoint}

Let us detail the solution of the Riemann problem (\ref{Kerr3D})(\ref{riem-data}) in that case, that is $\om=(\om_1,\om_2,0)$, and $u_\pm$ is such that $D_\pm=(D_{1,\pm},D_{2,\pm},0)$, $H_\pm=(0,0,H_{3,\pm})$. Then the vector $V$ defined in (\ref{def-V}) takes the form:
\[ 
V=v \left(
\begin{array}{c}
\om_2 \\
-\om_1 \\
0
\end{array}
\right), \; v=H_{3,+}-H_{3,-}-\om_1 (\sigma_+ D_{2,+}-\sigma_- D_{2,-})+\om_2(\sigma_+ D_{1,+}-\sigma_- D_{1,-}).
\]
If $v=0$, the intermediate states $u_{*}$, $u_{**}$ are clearly tranverse magnetic: the electric field is colinear to $\om$ and the magnetic one is given by (\ref{patho-H}).

Else, the vector $\zeta$ defined by (\ref{def-zeta}) reads as
\[
\displaystyle \frac{v}{|v|} \left(
\begin{array}{c}
\om_2 \\
-\om_1 \\
0
\end{array}
\right).
\]
Therefore, all the intermediate states are TM, see formulas (\ref{ddc-total-H}), (\ref{gene-3D-D-12}-\ref{gene-3D-D-star}), (\ref{2W}-\ref{5W}).

Hence, the solution of the Riemann problem (\ref{Kerr3D})(\ref{riem-data}) with Tansverse Magnetic data is Transverse Magnetic. Moreover it is easy to see that if the Riemann data are divergence free, so is the solution.

{\bf Particular case of the p-system}.  
Here we consider the Riemann problem (\ref{Kerr3D})(\ref{riem-data})  with
$\omega=(1,0,0)$ and $u_\pm$ is such that $D_\pm=(0,D_{2,\pm},0)$, $H_\pm=(0,0,H_{3,\pm})$. This field is divergence free. Then 
\[ 
V=v \left(
\begin{array}{c}
0\\
-1 \\
0
\end{array}
\right), \quad v=H_{3,+}-H_{3,-}-(\sigma_+ D_{2,+}-\sigma_- D_{2,-}).
\]
If $v=0$, we find $D_{*}=D_{**}=0$ and $H_{*}=H_{**}=(0,0,H_3)$ with 
\[
H_3=H_{3,+}-\sigma_+ D_{2,+}=H_{3,-}-\sigma_- D_{2,-}.
\]

Else, the vector $\zeta$ defined by (\ref{def-zeta}) reads as
\[
\zeta=\displaystyle \frac{v}{|v|} \left(
\begin{array}{c}
0\\
-1 \\
0
\end{array}
\right).
\]
To avoid confusion we denote $u_1=(D^{(1)},H^{(1)})$, $u_2=(D^{(2)},H^{(2)})$ the intermediate states 1 and 2. We have $D^{(1)}=d_1 \zeta$, $D^{(2)}=d_2 \zeta$, $D_\pm=d_\pm \zeta$ with $d_\pm=-\displaystyle \frac{v}{|v|} D_{2,\pm}$, $d_1=|d_-|$, $d_2=|d_+|$.  Hence by (\ref{ddc-total-H}), $H^{(1)}=(0,0,H_3^{(1)})$ and $H^{(2)}=(0,0,H_3^{(2)})$. Moreover $D_*=D_{**}=d_* \zeta$ and by (\ref{2W}-\ref{5W}), $H_*=H_{**}=(0,0,H_{3,*})$. Thus, the solution of the $6 \times 6$ Riemann problem is a solution of the $2 \times 2$ p-system (\ref{kerr}).

Let us remark that the stationary contact discontinuity is trivial but that if $d_-<0$ ({\it resp} $d_+<0$), the contact discontinuity 1 ({\it resp} 6) is not.
\section{The Riemann problem for the $3 \times 3$ Transverse Magnetic case}\label{2DTMsec}
Here, we study the reduced TM system (\ref{33TM}), which is important for the applications. 

In this section, we use two components vectors: $D=(D_1,D_2)$, $\om=(\om_1,\om_2)$, $E=\E(D)$. We denote 
\[
\om \pv D=\om_1 D_2 - \om_2 D_1, \quad \om^\perp=(-\om_2,\om_1).
\]
\subsection{Wave curves}
Following the lines of the $6 \times 6$ case, we can prove the following:
\begin{prop}\label{vpvp2DTM} 
The TM Kerr system (\ref{33TM}) is hyperbolic diagonalizable: for
all $\omega \in \R^2$, $|\omega|=1$, the
eigenvalues are given by 
\begin{equation}\label{3vp}
\lambda_{1}=-\lambda< \lambda_{3}=0 < \lambda_{3}=\lambda
\end{equation}
where
\begin{equation}\label{3vpvp}
\lambda^2=c^2 \, \frac{1+\epsilon_r(\nn +2 (E\cdot
  \omega)^2)}{(1+\epsilon_r \nn)(1+3 \epsilon_r \nn)}.
\end{equation}
If $\om \pv D \not=0$, the eigenvectors for $\lambda_{1}$ and $\lambda_{3}$ are
\[ r_1=\left(
\begin{array}{c}
(\omega \pv D) \om^\perp  \\
-\lambda(\omega \pv D)
\end{array}\right), \quad r_3=\left(
\begin{array}{c}
-(\omega \pv D)\om^\perp   \\
-\lambda(\omega \pv D)
\end{array}\right).
\]
The characteristic fields related to $\lambda_1$ and $\lambda_3$ are genuinely nonlinear in the domain
\[ \Omega(\omega)=\{ (D,H_3)\in \R^3 \; ; \; \omega \pv D \not=0  \}
\]
and for all $u\in \Omega(\omega)$
\begin{equation}\label{vnldef2DTM}
\lambda^\prime_{i}(u,\omega) \, r_{i}(u,\omega) > 0, \quad i=1,3.
\end{equation} 
\end{prop}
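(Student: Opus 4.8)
The plan is to mimic the $6\times 6$ computation of Proposition \ref{vpvp}–\ref{vnl} but in the reduced $3$-dimensional phase space, so the argument is essentially a specialization. First I would write the flux of (\ref{33TM}) in the direction $\omega=(\omega_1,\omega_2)$: setting $y=x\cdot\omega$ and using the $2$-component notation $\om\pv D=\om_1D_2-\om_2D_1$, $\om^\perp=(-\om_2,\om_1)$, the one-dimensional system reads $\partial_t D-\partial_y(\om^\perp H_3)=0$, $\partial_t H_3+\mu_0^{-1}\partial_y(\om\pv \E(D))=0$ (up to sign bookkeeping which I would check carefully). I would then form the Jacobian $A(u)\om$ of this flux. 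Its entries involve $D\E(D)$, the $2\times 2$ differential of the map $D\mapsto E$. From (\ref{defP}), $E=D/(\epsilon_0(1+\epsilon_r p^2(|D|)))$, and differentiating $D=q(|E|)E/|E|$ (equivalently, using $\D(E)=\epsilon_0(1+\epsilon_r|E|^2)E$) one gets that $D\E$ is a symmetric matrix with eigenvalue $1/(\epsilon_0(1+\epsilon_r|E|^2))$ in the direction orthogonal to $E$ and $1/(\epsilon_0(1+3\epsilon_r|E|^2))\cdot(\text{something})$ in the direction of $E$; these are exactly the ingredients already recorded in (\ref{6vpvp}) and (\ref{deriv-f}). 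I would cite \cite{dabh-CMS} for these derivative formulas rather than rederive them.

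Next I compute the characteristic polynomial of $A(u)\om$. The $\lambda=0$ eigenvalue is immediate: the vector $(D,0)$ with $D\parallel\om$ — more precisely the null direction of the flux — gives a stationary field, matching the middle eigenvalue $\lambda_2=0$ (the statement writes $\lambda_3$ twice, evidently a typo for $\lambda_2=0$ and $\lambda_3=\lambda$). For the two nonzero eigenvalues, expanding the $3\times 3$ determinant reduces to a $2\times 2$ computation because $H_3$ is scalar: one finds $\lambda^2$ equal to $\mu_0^{-1}$ times $\om^\perp\cdot(D\E(D))\om^\perp$, and substituting the eigenstructure of $D\E$ together with $c^2=1/(\epsilon_0\mu_0)$ and $|E\cdot\om|$ (note $\om^\perp$ picks out the component of $E$ along $\om$ after the cross-product algebra) yields precisely (\ref{3vpvp}). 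The claimed ordering $-\lambda<0<\lambda$ for $\om\pv D\neq 0$ follows from $\lambda>0$ there, and hyperbolic diagonalizability follows since $A(u)\om$ is then a matrix with three distinct real eigenvalues. I would verify that the proposed $r_1,r_3$ are indeed eigenvectors by direct substitution into $A(u)\om\, r_i=\lambda_i r_i$, using $D\E(D)\big((\om\pv D)\om^\perp\big)$ and the fact that $\om^\perp$ is orthogonal to $\om$.

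Finally, for the genuine nonlinearity claim (\ref{vnldef2DTM}) I would compute $\lambda'_i(u,\om)\cdot r_i(u,\om)$, i.e.\ the directional derivative of $\lambda_i=\pm\lambda$ along $r_i$. Writing $\lambda^2$ as a function of the scalars $d=|\om\pv D|$ (the transverse magnitude) and $d_0=D\cdot\om$, exactly as in the reduction preceding (\ref{def rarefct}) in the $6\times 6$ case, one sees that along the integral curve of $r_i$ the quantity $d_0$ is conserved while $d$ varies monotonically, so $\lambda'_i\cdot r_i$ has the sign of $\partial_d(\lambda(d,d_0))$ times a nonzero factor. The monotonicity $\partial_d\lambda>0$ on $\{\om\pv D\neq 0\}$ is exactly the property extracted from $f$ in \cite{dabh-CMS} (it is what makes $f(\cdot,d_0)$ increasing and underlies (\ref{deriv-f})); invoking it gives (\ref{vnldef2DTM}). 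The main obstacle, such as it is, is bookkeeping: getting the signs in the $\om\pv$ and $\om^\perp$ identities consistent so that the $3\times 3$ determinant collapses cleanly to the stated $\lambda^2$, and making sure the degenerate middle eigenvalue is correctly identified; the analytic content (the eigenvalues of $D\E$ and the monotonicity of $f$) is entirely inherited from \cite{dabh-CMS} and the earlier sections.
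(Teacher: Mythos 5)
Your first two steps — writing the flux in the direction $\omega$, reducing the eigenvalue problem to $\lambda^2=\mu_0^{-1}\,\omega^\perp\cdot\bigl(\mathrm{d}\E(D)\bigr)\,\omega^\perp$ via the symmetric differential of $D\mapsto E$, and checking $r_1,r_3$ by direct substitution — are sound and are exactly the specialization of the $6\times6$ computation that the paper intends (it gives no more detail than ``following the lines of the $6\times6$ case'', citing \cite{dabh-CMS}). One small omission there: the strict ordering in (\ref{3vp}) is asserted for \emph{all} states, and indeed $\lambda^2>0$ even when $\omega\times D=0$ (then $\lambda^2=c^2/(1+\epsilon_r|E|^2)$), so the $3\times3$ TM system is strictly hyperbolic everywhere, unlike the $6\times6$ system; your argument only treats $\omega\times D\neq0$ and leaves diagonalizability at $\omega\times D=0$ unaddressed. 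That is easy to repair.

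The genuine-nonlinearity step, however, has a real gap. Write $D=d_0\omega+d\,\omega^\perp$ with the \emph{signed} scalar $d=\omega\times D$ (in the TM setting $d$ can take either sign, unlike the $6\times6$ parametrization by $\zeta$ where $d\geq0$). Since $\lambda$ depends only on $D$, and the $D$-component of $r_i$ is $\pm(\omega\times D)\,\omega^\perp$, one gets for both $i=1,3$
\[
\lambda_i^\prime(u,\omega)\,r_i(u,\omega)\;=\;-(\omega\times D)\,\partial_d\lambda(d,d_0)\;=\;-\frac{d}{2\lambda}\,\partial_1^2 f(d,d_0),
\]
using (\ref{deriv-f}). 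So (\ref{vnldef2DTM}) rests on the strict concavity of $f(\cdot,d_0)$ on $\R_+$ and strict convexity on $\R_-$ — equivalently, $\lambda(\cdot,d_0)$ is even in $d$ and strictly decreasing in $|d|$ — combined with the sign of the factor $-(\omega\times D)$, which is precisely the bookkeeping you deferred. Your proposal instead invokes ``$\partial_d\lambda>0$'' and attributes it to ``what makes $f(\cdot,d_0)$ increasing''. Both points are wrong: $f$ increasing only says $\partial_1 f=\lambda^2>0$, i.e.\ hyperbolicity, and carries no information about the variation of $\lambda$ in $d$; and the true monotonicity is the opposite one ($\lambda$ decreases as the transverse component grows; e.g.\ for $d_0=0$, $\lambda^2=c^2/(1+3\epsilon_r\,|E|^2)$ with $|E|=p(|d|)$). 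With the claimed sign and an undetermined ``nonzero factor'', the conclusion $\lambda_i^\prime r_i>0$ does not follow — for half of the states (or all, depending on how the factor resolves) you would get the wrong sign. With the corrected ingredients (the identity displayed above, the second-derivative property of $f$ from \cite{dabh-CMS}, and evenness of $\lambda$ in $d$ so that both signs of $\omega\times D$ give the same positive value) the step closes.
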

When we compare the $6 \times 6$ and $3 \times 3$ situations, we observe that the reduction to TM fields makes the eigenvalues related to non-stationary contact discontinuities disappear. This is easily understandable since we have seen that those waves induce a 3D rotation of the electric field, namely a rotation around the direction of $\om$.

The stationary contact discontinuities are characterized as in proposition \ref{ddc0}. 

For a fixed left state $u_-=(D_-,H_{3,-})$ we can proceed as in \cite{dabh-CMS} to determine the related Hugoniot set, that is the set of all right states $u_+$ satisfying the Rankine-Hugoniot relations, which read here:
\[
\sigma [D]= [H_3] \omega^\perp, \qquad \sigma \mu_0 [H_3]= [\omega \pv E].
\]
Therefore, non-stationary shocks are divergence free. As $D=(D\cdot \omega)\omega + (\omega \pv D) \omega^\perp$,
we have $D_- \cdot \omega = D_+ \cdot \omega $ and 
\[
\sigma [\omega \pv D]=[H_3], \qquad \sigma \mu_0 [H_3]=[\omega \pv E].
\]  
Consequently
\[
 \sigma^2=\frac{[\omega \pv E]}{\mu_0 [\omega \pv D]}=\frac{f(\omega \pv
  D_+,d_0)-f(\omega \pv D_-,d_0)}{(\omega \pv D_+) - (\omega \pv D_-)}   
\]
where $f$ is the function defined in (\ref{fonctionf}). The vector $\zeta$ defined in section \ref{66-3D} is not useful here. Instead we denote 
\[ D_-=d_0 \om + d_- \om^\perp . \]
\begin{prop}\label{hugo-TM}
Let $u_-=(D_-,H_{3,-})$ be a fixed left state. 

The set $\Hcal(u_-)$ of the states $u_+$ connected to $u_-$ by a non-stationary shock is the set of $u_+$ such that 
\[
\left\{
\begin{array}{lr}
D_+=d_0\omega + d_+ \om^\perp , & d_+ \in \R, \\
H_{3,+}=H_{3,-} + \sigma (d_+-d_-),
\end{array}
\right.
\]
and the shock speed $\sigma(u_-,u_+)=\sigma$ satisfies (\ref{sigmaTM}).
\end{prop}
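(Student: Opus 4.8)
The plan is to mirror the analysis already carried out for the Kerr system in \cite{dabh-CMS}, but working directly with the scalar quantity $\omega \pv D$ rather than with the vector $\zeta$, exploiting the fact that in the $3 \times 3$ TM setting every non-stationary discontinuity is automatically divergence free (as shown in the discussion immediately preceding the statement). First I would record the structural identity $D = (D \cdot \omega)\omega + (\omega \pv D)\omega^\perp$, so that a state $u=(D,H_3)$ is completely described by the triple $(d_0, d, H_3)$ with $d_0 = D\cdot\omega$ and $d = \omega \pv D$; note $\om^\perp$ is a unit vector orthogonal to $\om$, so $|D|^2 = d_0^2 + d^2$. From the Rankine--Hugoniot relations rewritten in the form $\sigma[\omega\pv D] = [H_3]$, $\sigma\mu_0[H_3] = [\omega\pv E]$, one immediately gets $[D\cdot\omega] = 0$, i.e. $d_0$ is conserved across the shock, which gives the first line $D_+ = d_0\omega + d_+\om^\perp$ in the claimed description of $\Hcal(u_-)$.

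The second step is the elimination of $H_3$. Since $[\omega\pv E]$ depends only on $d_0$, $d_-$, $d_+$ through the function $f$ of \eqref{fonctionf} --- precisely $\omega\pv E_\pm = f(d_\pm,d_0)/c^2 \cdot c^2 = f(d_\pm,d_0)$ up to the normalization already fixed in \eqref{fonctionf}, using $|E_\pm| = p(|D_\pm|) = p(\sqrt{d_0^2+d_\pm^2})$ from \eqref{defP} --- the two Rankine--Hugoniot equations combine into
\[
\sigma^2 = \frac{[\omega\pv E]}{\mu_0[\omega\pv D]} = \frac{f(d_+,d_0) - f(d_-,d_0)}{d_+ - d_-},
\]
which is exactly \eqref{sigmaTM}. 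This already forces $\sigma^2 > 0$ for $d_+ \neq d_-$ because $f(\cdot,d_0)$ is increasing (stated after \eqref{fonctionf}), so a genuine (non-stationary) shock speed exists for every $d_+ \neq d_-$; conversely $d_+ = d_-$ forces $[D]=0$ and $[H_3]=0$, i.e. the trivial discontinuity, which is why the Hugoniot curve is parametrized by $d_+ \in \R$ with no exclusions. The first Rankine--Hugoniot equation then reads $H_{3,+} = H_{3,-} + \sigma(d_+ - d_-)$, completing the description of $\Hcal(u_-)$; one should also remark that the sign of $\sigma$ is a free choice of square root at this stage (the Lax admissibility selecting $1$- versus $3$-shocks will be imposed afterwards, as in section \ref{66-3D}), so both branches belong to $\Hcal(u_-)$.

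The only point requiring genuine care --- the ``main obstacle'' --- is the claim that these are \emph{all} the states connected to $u_-$ by a non-stationary shock, i.e.\ that a discontinuity with $\sigma \neq 0$ necessarily has $\omega\cdot[D] = 0$ and hence lies on the above curve. This follows from taking the inner product of the first Rankine--Hugoniot relation $\sigma[D] = [H_3]\omega^\perp$ with $\omega$: since $\omega\cdot\omega^\perp = 0$ and $\sigma\neq 0$, we get $\omega\cdot[D]=0$, so $d_0$ is indeed conserved; then the decomposition $D_\pm = d_0\omega + d_\pm\om^\perp$ is forced and the rest of the relations pin down $H_{3,+}$ and $\sigma$ as above. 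I would also check consistency with the linearly degenerate stationary contact discontinuities characterized as in Proposition \ref{ddc0}: those have $\sigma = 0$ and are excluded here by hypothesis, so there is no overlap, and the ``non-stationary'' qualifier in the statement is exactly what makes $\Hcal(u_-)$ a single smooth curve. With these verifications in place, collecting \eqref{sigmaTM} and the formula for $H_{3,+}$ yields the proposition; no delicate estimate is needed beyond the monotonicity of $f(\cdot,d_0)$ already established in \cite{dabh-CMS}.
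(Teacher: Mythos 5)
Your proposal is correct and follows essentially the same route as the paper, which establishes the proposition through the computation immediately preceding it: take the $\omega$-component of $\sigma[D]=[H_3]\omega^\perp$ to get $\omega\cdot[D]=0$ when $\sigma\neq 0$, decompose $D=d_0\omega+d\,\omega^\perp$, and combine the two Rankine--Hugoniot relations to obtain $H_{3,+}=H_{3,-}+\sigma(d_+-d_-)$ and $\sigma^2=\bigl(f(d_+,d_0)-f(d_-,d_0)\bigr)/(d_+-d_-)$, with monotonicity of $f(\cdot,d_0)$ guaranteeing $\sigma^2>0$. Only a cosmetic slip: the precise relation is $\omega\pv E_\pm=\mu_0 f(d_\pm,d_0)$ (not $f(d_\pm,d_0)$ itself), but your displayed identity already accounts for the $\mu_0$ factor, so the conclusion is unaffected.
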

Let us now study Lax entropy conditions (\ref{lax-2}). For a 1-shock, they read as
\[ -\lambda(u_+) \leq \sigma \leq -\lambda(u_-).\]
Those conditions are very different from the $6 \times 6$ case, where for a 2-shock the requirement $\lambda_1(u_-)\leq \sigma$ imposes a sign condition on $d_+$, see \cite{dabh-CMS}. Here, this sign condition no longer exists. Instead, we obtain that $\sigma <0$ and using (\ref{deriv-f}):
\begin{equation}\label{lax2DTM} 
\partial_1 f(d_-,d_0) \leq \frac{f(d_+,d_0)-f(d_-,d_0)}{d_+ - d_-} \leq \partial_1 f(d_+,d_0).
\end{equation}
If $d_-=0$, as $\partial_1 f(\cdot,d_0)$ is maximal for $d=0$, we have $d_+=0$, hence $u_-=u_+$. 

Else, if $d_->0$, ({\it resp} $d_-<0$), as $f(\cdot,d_0)$ is strictly concave ({\it resp} convex) on $\R^+$ ({\it resp} $\R^-$), the formula is true if $0 \leq |d_+| \leq |d_-|$ and $d_- d_+ \geq 0$, but this is not necessary. Thus, we have to go beyond the point where $\om \pv D =0$, and the characteristic field 1 is not genuinely nonlinear. The relevant condition in that case is Liu's entropy condition, see \cite{Liu74}, \cite{Liu76}. 
\begin{definition}\label{def-adm-liu-1D}
Let $u_-$ be a given left state and consider $u_+ \in \Hcal(u_-)$. The discontinuity is Liu-admissible if
\[ 
(E)\qquad \sigma(u_+,u_-)\leq \sigma(u,u_-), \qquad \forall u \in \Hcal(u_-), \; u \; 
\mbox{between }  \; u_-  \; \mbox{and }  \; u_+ \, .
\]
\end{definition}
\begin{prop}\label{prop-liu}{\bf Liu's 1-shocks}

Let $u_-$ be a given left state and consider $u_+ \in \Hcal(u_-)$, written as in proposition \ref{hugo-TM}, with $\sigma <0$. 

If $\om \pv D_- = 0$, the discontinuity is neither Liu-admissible, nor Lax-admissible. 

If $\om \pv D_- \not = 0$, let $d_*=d_*(d_-)$ be the unique real such that $d_- d_* <0$ and
\[ \partial_1 f(d_*,d_0) = \frac{f(d_-,d_0)-f(d_*,d_0)}{d_- - d_0}\,.
\]
The shock is Liu-admissible if and only if $d_+ \in [d_*,d_-]$. When the shock is Liu-admissible, it is also Lax-admissible.
\end{prop}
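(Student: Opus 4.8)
The plan is to recast Liu's condition (E) as a statement about the chord slopes of the graph of $f(\cdot,d_0)$, and then read off the admissible interval from the known shape of $f$ (strict concavity on $\R_+$, strict convexity on $\R_-$, $f(0,d_0)=0$, $\partial_1 f=\lambda^2$ maximal at $0$, $f(\cdot,d_0)$ an increasing $C^2$ diffeomorphism of $\R$).

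\emph{Reformulation.} By Proposition \ref{hugo-TM}, a state $u_+\in\Hcal(u_-)$ is parametrized by $d_+=\om\pv D_+$, with $D_+\cdot\om=D_-\cdot\om=d_0$ and shock speed given by (\ref{sigmaTM}); set $d_-=\om\pv D_-$. Introduce the chord-slope function $G(d)=\dfrac{f(d,d_0)-f(d_-,d_0)}{d-d_-}$ for $d\neq d_-$, extended by $G(d_-)=\partial_1 f(d_-,d_0)$. Then $G$ is $C^1$ on $\R$, strictly positive (because $f(\cdot,d_0)$ is increasing), and $\sigma(u,u_-)^2=G(\om\pv D)$ along $\Hcal(u_-)$; on the $1$-shock branch $\sigma(u,u_-)=-\sqrt{G(\om\pv D)}$, so Liu's condition (E) for $(u_-,u_+)$ becomes
\[ G(d_+)\ge G(d)\quad\text{for every }d\text{ between }d_-\text{ and }d_+, \]
i.e.\ the chord from $d_-$ to $d_+$ is the steepest among all chords from $d_-$ to intermediate abscissas. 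A direct computation gives $G'(d)=-(d-d_-)^{-2}\phi(d)$ with $\phi(d)=f(d,d_0)-f(d_-,d_0)-\partial_1 f(d,d_0)(d-d_-)$, so $\sgn G'(d)=-\sgn\phi(d)$ for $d\neq d_-$. Since $f(\cdot,d_0)$ is odd in its first variable, it suffices to treat $d_-\ge0$; the case $d_-<0$ follows by the substitution $d\mapsto-d$, under which $f$, $\sigma$ and both admissibility conditions are invariant.

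\emph{Case $\om\pv D_-=0$, i.e.\ $d_-=0$.} Here $\phi(d)=f(d,d_0)-\partial_1 f(d,d_0)\,d$ with $\phi(0)=0$ and $\phi'(d)=-\partial_1^2 f(d,d_0)\,d$, which is $>0$ for $d\neq0$ by strict concavity on $\R_+$ and strict convexity on $\R_-$; hence $\phi>0$ on $\R_+$, $\phi<0$ on $\R_-$, so $G$ is strictly decreasing on $\R_+$ and strictly increasing on $\R_-$. For any $d_+\neq0$ there is thus an intermediate $d$ with $G(d)>G(d_+)$, so (E) fails; and, as noted after (\ref{lax2DTM}), the Lax inequalities force $d_+=0$ since $\partial_1 f(\cdot,d_0)$ is maximal at $0$. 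Hence the discontinuity is neither Liu- nor Lax-admissible.

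\emph{Case $\om\pv D_-\neq0$, say $d_->0$.} If $d_+>d_-$, then on $(d_-,d_+)$ we have $\phi>0$ (since $\phi(d_-)=0$, $\partial_1^2 f<0$, $d_--d<0$), so $G$ is decreasing on $[d_-,d_+]$ and (E) fails; hence $d_+\le d_-$. On $(-\infty,d_-)$ we have $\phi'(d)=\partial_1^2 f(d,d_0)(d_--d)$, which is $<0$ on $(0,d_-)$ and $>0$ on $(-\infty,0)$; with $\phi(d_-)=0$ this gives $\phi>0$ on $(0,d_-)$ and $\phi(0)=\partial_1 f(0,d_0)\,d_- - f(d_-,d_0)>0$ by strict concavity of $f(\cdot,d_0)$ on $[0,d_-]$. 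On $(-\infty,0)$, $\phi$ is strictly increasing; writing $\phi(d)=\psi(d)+(\partial_1 f(d,d_0)d_- - f(d_-,d_0))$ with $\psi(d)=f(d,d_0)-\partial_1 f(d,d_0)d$, one has $\psi(0)=0$ and $\psi'(d)=-\partial_1^2 f(d,d_0)d>0$ on $\R_-$, so $\psi<0$ there, while $\partial_1 f(d,d_0)\le c^2\bigl(1+\epsilon_r p^2(\sqrt{d_0^2+d^2})\bigr)^{-1}\to0$ as $d\to-\infty$ (immediate from (\ref{3vpvp}) and $(E\cdot\om)^2\le|E|^2$). Hence $\limsup_{d\to-\infty}\phi(d)\le -f(d_-,d_0)<0$, and by the intermediate value theorem $\phi$ has a unique zero $d_*\in(-\infty,0)$, which is exactly the number of the statement (the tangency condition $\partial_1 f(d_*,d_0)=\dfrac{f(d_-,d_0)-f(d_*,d_0)}{d_- - d_*}$) and satisfies $d_-d_*<0$. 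Consequently $G$ is strictly increasing on $(-\infty,d_*)$, strictly decreasing on $(d_*,d_-)$, and $\phi\ge0$ on $[d_*,d_-]$. Now if $d_+\in[d_*,d_-]$ then $G$ is decreasing on $[d_+,d_-]$, so $G(d_+)=\max_{[d_+,d_-]}G$ and (E) holds; if $d_+<d_*$ then $d_*\in(d_+,d_-)$ and $G(d_*)>G(d_+)$, so (E) fails — hence Liu-admissibility is equivalent to $d_+\in[d_*,d_-]$. For such $d_+$, monotonicity of $G$ on $[d_*,d_-]$ gives $G(d_+)\ge G(d_-)=\lambda^2(u_-)$, while $\phi(d_+)\ge0$ rewrites, after dividing by $d_+-d_-<0$, as $G(d_+)\le\partial_1 f(d_+,d_0)=\lambda^2(u_+)$; since $\sigma<0$, this is precisely $-\lambda(u_+)\le\sigma\le-\lambda(u_-)$, i.e.\ Lax's condition for a $1$-shock. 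The main obstacle is showing that $\phi$ changes sign as $d\to-\infty$: this is not a consequence of the local convexity of $f$ but uses the global decay $\partial_1 f(\cdot,d_0)\to0$ together with $f(d_-,d_0)>0$.
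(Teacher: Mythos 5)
Your proof is correct. Note that the paper itself gives no written proof here: it defers to \cite{dabh-CMS}, where the result is established for the p-system case $d_0=0$, and states that the general case "follows the same lines". Your argument carries out exactly that line: recasting Liu's condition (E) as monotonicity of the chord-slope function $G$ of $f(\cdot,d_0)$ from the base point $d_-$, using $G'=-\phi/(d-d_-)^2$ with $\phi(d)=f(d,d_0)-f(d_-,d_0)-\partial_1 f(d,d_0)(d-d_-)$, and exploiting the strict concavity/convexity of $f(\cdot,d_0)$ on $\R_\pm$ together with $\partial_1 f=\lambda^2$ (which also yields the Lax inequalities (\ref{lax2DTM}) from $\phi(d_+)\ge 0$ and the monotonicity of $G$ on $[d_*,d_-]$). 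The one step that is genuinely more than "the same lines" for general $d_0$ is the existence of the tangency point $d_*$ on the other side of the origin: as you rightly observe, this does not follow from local convexity alone but from the global decay $\partial_1 f(d,d_0)=\lambda^2(d,d_0)\le c^2\bigl(1+\epsilon_r p^2(\sqrt{d_0^2+d^2})\bigr)^{-1}\to 0$, which forces $\phi$ to become negative as $d\to-\infty$; your treatment of this point is correct and fills a detail the paper leaves implicit. You also implicitly corrected the denominator in the statement of the tangency relation, which should read $d_--d_*$ rather than $d_--d_0$; your interpretation is the intended one, consistent with $\phi(d_*)=0$.
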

We point out the fact that $[d_*,d_-]$ is to be understood as the segment having $d_*$ and $d_-$ as extreme points. This result is proved in \cite{dabh-CMS} for the particular case of system (\ref{kerr}). The proof of proposition \ref{prop-liu} follows the same lines so we omit it. In the following, we shall denote 
\[ D_*=d_0 \om + d_*(d_-) \om^\perp, \quad H_{3,*}= H_{3,-}+ \sigma (d_*(d_-)-d_-), \quad u_*(u_-)=(D_*,H_{3,*}).
\]
The 3-shocks are deduced from the 1-shocks by symmetry as in proposition \ref{symmetry-choc}.

We compute the rarefaction waves as in the $6 \times 6$ case, using again the function $R$ defined in (\ref{def rarefct}). We give the result for the 1-rarefactions, the 3-rarefactions are deduced by symmetry.
\begin{prop}{\bf 1-rarefactions}\label{prop-rare-2DTM}

Let $u_+=(D_+,H_{3,+})\in \Omega(\om)$ be a given right state:
\[ D_+=d_0 \om + d_+ \om^\perp, \qquad d_0 \in \R, \quad d_+ \not = 0 .\]
For $ 0 \leq |d_-| \leq |d_+|$, $\;d_- \, d_+ \geq 0$, let $u_-$ be defined by
\[ 
D_-=d_0 \om + d_- \om^\perp, \qquad H_{3,-}=H_{3,+}+ \sgn(d_+)\rare(|d_-|,|d_+|,d_0)\, .
\]
Then $u_-$ and $u_+$ are connected by a 1-rarefaction wave.
\end{prop}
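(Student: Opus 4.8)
\noindent\emph{Sketch of the argument.} The plan is to mimic, in lower dimension, the construction of the $6\times6$ rarefactions of Section~\ref{rarefaction-full-kerr}, using now the eigenvector $r_1$ of Proposition~\ref{vpvp2DTM}. First I would write the integral curves of $r_1$: a $C^1$ curve $\xi\mapsto U(\xi)=(D(\xi),H_3(\xi))$ with $U'(\xi)=r_1(U(\xi),\om)$ satisfies
\[
D'(\xi)=(\om\pv D(\xi))\,\om^\perp,\qquad H_3'(\xi)=-\lambda(D(\xi))\,(\om\pv D(\xi)).
\]
Since $\om^\perp\cdot\om=0$, the quantity $D(\xi)\cdot\om$ is conserved along the curve; call it $d_0$. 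Writing $d(\xi)=\om\pv D(\xi)=D(\xi)\cdot\om^\perp$ and using $|\om^\perp|=1$ one gets $d'(\xi)=d(\xi)$, hence $d(\xi)=d(\xi_+)\,\e^{\xi-\xi_+}$. Fixing $U(\xi_+)=u_+$, so $d(\xi_+)=d_+\neq0$, the function $d(\xi)$ never vanishes and keeps the sign of $d_+$; thus the whole integral curve stays in $\Omega(\om)$, where $r_1$ and $\lambda$ are smooth, and $D(\xi)=d_0\om+d_+\e^{\xi-\xi_+}\om^\perp$.

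Next I would integrate the $H_3$ component. In the TM setting $D=d_0\om+(\om\pv D)\,\om^\perp$, so $|D|^2=d_0^2+(\om\pv D)^2$ and $E\cdot\om$ depends only on $d_0$ and $|D|$; by (\ref{3vpvp}) the eigenvalue $\lambda$ is therefore a function of $d_0$ and $(\om\pv D)^2$, i.e.\ the function $\lambda(d,d_0)$, $d\ge0$, already used in the $6\times6$ case, so that $\lambda(D(\xi))=\lambda(|d(\xi)|,d_0)$. For $\xi_-<\xi_+$ put $d_-=d_+\e^{\xi_- -\xi_+}$, which has the sign of $d_+$ and satisfies $0<|d_-|\le|d_+|$. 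Integrating $H_3'=-\lambda(|d|,d_0)\,d$ and substituting $s=d(\xi)$ (so $ds=s\,d\xi$) gives
\[
H_3(\xi_-)=H_{3,+}+\int_{d_-}^{d_+}\lambda(|s|,d_0)\,ds,
\]
and evaluating this integral separately for $d_+>0$ and for $d_+<0$ — to keep track of the sign in $|d(\xi)|=\sgn(d_+)\,d(\xi)$ and of the orientation of the interval — yields in both cases $\int_{d_-}^{d_+}\lambda(|s|,d_0)\,ds=\sgn(d_+)\,\rare(|d_-|,|d_+|,d_0)$. Hence $H_3(\xi_-)=H_{3,+}+\sgn(d_+)\,\rare(|d_-|,|d_+|,d_0)$, which is exactly the state $u_-$ of the statement; moreover $|d_-|=|d_+|\,\e^{\xi_- -\xi_+}$ is increasing in $\xi_-$, so $\xi_-\mapsto d_-$ is a monotone bijection from $(-\infty,\xi_+)$ onto the open segment with endpoints $0$ and $d_+$, which matches the admissible range $0<|d_-|\le|d_+|$, $d_-d_+\ge0$.

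To finish, I would use the genuine nonlinearity (\ref{vnldef2DTM}): $\psi=\lambda_1\circ U$ satisfies $\psi'(\xi)=\nabla\lambda_1(U(\xi))\cdot r_1(U(\xi),\om)>0$, so $\psi$ is strictly increasing on the relevant $\xi$-interval, and in particular $\lambda_1(u_-)<\lambda_1(u_+)$. The self-similar function equal to $u_-$ for $y\le\lambda_1(u_-)$, to $U(\psi^{-1}(y))$ for $\lambda_1(u_-)\le y\le\lambda_1(u_+)$, and to $u_+$ for $y\ge\lambda_1(u_+)$ is then a centred $1$-rarefaction wave for (\ref{33TM}) by the classical theory (\cite{bressan}, \cite{serre}). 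Finally, letting $\xi_-\to-\infty$ gives $d_-\to0$ while $D(\xi_-)\to D_-$ and $H_3(\xi_-)\to H_{3,+}+\sgn(d_+)\,\rare(0,|d_+|,d_0)$, since $\rare(\cdot,|d_+|,d_0)$ is continuous; this extends the construction, and the statement, to $d_-=0$, i.e.\ to $u_-\notin\Omega(\om)$, exactly as was done in Section~\ref{rarefaction-full-kerr}. I do not anticipate a real obstacle, everything being parallel to the $6\times6$ analysis; the one point requiring care is the sign bookkeeping in the substitution $s=d(\xi)$ — the split into $d_+>0$ and $d_+<0$ — which is precisely what produces the factor $\sgn(d_+)$ and lets the result be stated uniformly in $|d_\pm|$ (here, unlike in the $6\times6$ case, $\om\pv D$ is a signed quantity).
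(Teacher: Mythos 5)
Your proposal is correct and follows essentially the same route as the paper, which states that the TM rarefactions are computed "as in the $6\times6$ case": you integrate the field $r_1$ of Proposition \ref{vpvp2DTM}, use conservation of $D\cdot\om$ and $d'=d$ (coming from $\om\pv\om^\perp=1$, i.e.\ $|\om|=1$), reduce $\lambda$ to the function $\lambda(|d|,d_0)$, and invoke genuine nonlinearity plus the limit $\xi_-\to-\infty$ exactly as in Section \ref{rarefaction-full-kerr}. The sign bookkeeping producing the factor $\sgn(d_+)$ is handled correctly and matches the statement (and the global wave curve formula given right after it), so there is nothing to fix.
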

As a particular case, if we fix $u_-=(d_0 \om, H_{3,-})$, we can define a global 1-wave curve parametrized by $d_+ \in \R$ which consists of rarefactions only:
\[ 
D_+=d_0 \om + d_+ \om^\perp, \qquad H_{3,+}=H_{3,-}- \sgn(d_+)\rare(0,|d_+|,d_0)\, , \quad d_+ \in \R.
\]
Otherwise, if we fix $u_-$ such that $D_- =d_0  \om +d_- \om^\perp $, $\; d_- \not=0$, putting together Liu's 1-shocks and 1-rarefactions gives a wave curve which is defined for a parameter $d_+ \in [d_*(d_-), +\infty[$ if $d_- >0$, and a parameter  $d_+ \in ]-\infty,d_*(d_-)]$ if $d_- <0$. We complete this curve by using composed waves as explained in \cite{wen} and \cite{Liu76}, that is by the 1-rarefaction curve related to the left state $u_*(u_-)$. 

Finally, we define the wave function. For $d \not = 0$, $d_*(d)$ is defined as in proposition \ref{prop-liu}. For $d=0$, we set $d_*(0)=0$. Then for all $d_1$, $d_2$, $d_0$ we define $\varphi(d_1,d_2,d_0)$ as
\begin{equation}\label{wave2DTM}
\varphi(d_1,d_2,d_0)= \left\{
\begin{array}{l}
-\sgn(d_1) R(|d_1|,|d_2|,d_0) \quad {\rm if} \; |d_1| \leq |d_2|,\; d_1 d_2 \geq 0,\\ 
\sgn(d_1) S(d_1,d_2,d_0)  \quad  {\rm if} \; d_2 \in [d_*(d_1),d_1],\\ 
\sgn(d_1) \left(S(d_1,d_*(d_1),d_0)+R(d_*(d_1),|d_2|,d_0) \right) \quad {\rm else.} 
\end{array}
\right.
\end{equation}
As in proposition \ref{decphi}, we can prove that $\varphi$ is a decreasing $C^1$ function with respect to $d_2$ and for all $d_1$, $d_0 \in \R$:
\begin{equation}\label{varphienzero}
 \lim_{d_2 \rightarrow  \pm\infty} \varphi(d_1,d_2,d_0)=\mp \infty .
\end{equation}
\begin{prop}\label{lesondes2DTM}
Let us consider $u_-$ and $u_+$ such that $D_- \not = D_+$ and $D_- \cdot \om = D_+ \cdot \om =d_0$. 

$u_-$ and $u_+$ are connected by a 1-wave if there exist two distinct real numbers $d_-$, $d_+$ such that
\begin{equation}\label{2-wave-TM}
D_\pm=d_0 \om + d_\pm \om^\perp, \quad H_+ = H_- + \varphi (d_-, d_+, d_0) \, .
\end{equation}
$u_-$ and $u_+$ are connected by a 3-wave if there exist two distinct real numbers $d_-$, $d_+$ such that
\begin{equation}\label{5-wave-TM}
D_\pm=d_0 \om + d_\pm \om^\perp, \quad H_+ = H_- + \varphi (d_+, d_-, d_0) \, .
\end{equation}
\end{prop}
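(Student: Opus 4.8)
The statement to prove is Proposition \ref{lesondes2DTM}, which characterizes when two states $u_-$, $u_+$ sharing the same component $d_0 = D\cdot\omega$ are connected by a 1-wave (or a 3-wave) in terms of the wave function $\varphi$.

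The plan is to reduce everything to the scalar statements already established for 1-shocks and 1-rarefactions, and then to observe that the wave function $\varphi$ has been cooked up precisely to glue those pieces together. First I would treat the 1-wave claim. Suppose $u_-$ and $u_+$ are connected by a 1-wave. By definition this means they are connected either by a Liu-admissible 1-shock, a 1-rarefaction, or a composed wave (a Liu 1-shock followed by a 1-rarefaction from the state $u_*(u_-)$, as described just before the statement). In each of these three sub-cases, the results recalled above — Proposition \ref{hugo-TM} for the Hugoniot set, Proposition \ref{prop-liu} for Liu-admissibility, and Proposition \ref{prop-rare-2DTM} for rarefactions — force $D_\pm = d_0\om + d_\pm\om^\perp$ for suitable reals $d_\pm$, and give exactly the value of $H_{3,+} - H_{3,-}$ recorded in the three branches of the definition \eqref{wave2DTM} of $\varphi$: the $-\sgn(d_1)R$ branch for a pure rarefaction ($|d_-|\le|d_+|$, $d_-d_+\ge0$), the $\sgn(d_1)S$ branch for a Liu shock ($d_+\in[d_*(d_-),d_-]$), and the third branch for a composed wave. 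Conversely, given two distinct reals $d_-,d_+$ with $D_\pm = d_0\om + d_\pm\om^\perp$ and $H_+ = H_- + \varphi(d_-,d_+,d_0)$, one reads off which of the three regimes $(d_-,d_+)$ lies in; since $\varphi$ is, for fixed $(d_1,d_0)$, a strictly decreasing continuous (indeed $C^1$) function of $d_2$, the three regimes partition the relevant range of $d_+$ without overlap, so the regime is unambiguous, and in that regime the corresponding structural proposition produces the 1-wave connecting $u_-$ and $u_+$.

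For the 3-wave claim I would invoke the symmetry between 1-shocks and 3-shocks already noted (the analogue of Proposition \ref{symmetry-choc}) together with the symmetric form of the rarefactions: reflecting $D \mapsto -D$ and relabeling exchanges the roles of the left and right states and of the two genuinely nonlinear families, which is exactly why the arguments of $\varphi$ appear swapped, $\varphi(d_+,d_-,d_0)$ rather than $\varphi(d_-,d_+,d_0)$, in \eqref{5-wave-TM}. So the 3-wave statement follows from the 1-wave statement applied to the reflected data with no further work.

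The only genuinely delicate point — and the one I would spell out rather than wave through — is the verification that the three branches of $\varphi$ fit together into a single well-defined, strictly decreasing $C^1$ function of $d_2$, so that the correspondence $d_2 \leftrightarrow H_+ - H_-$ is a bijection onto its range and the case distinction in the converse direction is forced. This is the content of the monotonicity and limit claims \eqref{varphienzero}, which the text asserts can be shown "as in Proposition \ref{decphi}"; concretely one checks continuity and matching of one-sided derivatives at the junction points $d_2 = d_1$ (between rarefaction and shock branches) and $d_2 = d_*(d_1)$ (between shock and composed branches), using $\partial_1 f(d,d_0) = \lambda^2(d,d_0)$ and the defining relation for $d_*$, and one checks $\partial_2\varphi < 0$ throughout — the shock branch requiring the same secant-versus-tangent concavity/convexity estimate for $f(\cdot,d_0)$ that underlies Proposition \ref{decphi}. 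Granting that, the proposition is immediate.
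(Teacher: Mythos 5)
Your proposal is correct and follows essentially the same route as the paper, which states Proposition \ref{lesondes2DTM} without a separate proof precisely because it is the direct assembly of Proposition \ref{hugo-TM}, Proposition \ref{prop-liu}, Proposition \ref{prop-rare-2DTM}, the composed waves, and the 1/3 symmetry (the TM analogue of Proposition \ref{symmetry-choc}) that you invoke. The one point you single out as delicate --- that the three branches of $\varphi$ glue into a decreasing $C^1$ function of $d_2$ --- is exactly what the paper asserts ``as in Proposition \ref{decphi}'' just before the statement, so your treatment is in line with the intended argument.
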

\subsection{Solution of the Riemann problem}\label{resol2DTM}
\label{sec-sol-2DTM}
Suppose that $u_\pm=(D_\pm,H_{3,\pm})$ and $\omega\in R^3$,
$|\omega|=1$, are given. 
We look for
intermediate states $u^{(1)}$, $u^{(2)}$ such that:
\begin{itemize}
\item $u_-$ and $u^{(1)}$ are connected by a 1-wave, 
\item $u^{(1)}$ and $u^{(2)}$ are connected by a stationary contact discontinuity, 
\item $u^{(2)}$ and $u_+$ are connected by a 3-wave.
\end{itemize}
In the following we shall denote $D_\pm =d_0^\pm \om + d_\pm \om^\perp$.
\subsubsection{Necessary conditions.} 
Suppose that a solution exists. There exist real numbers $d_1$, $d_2$ such that
\begin{equation}\label{2DTM-forme} D^{(1)}=d_0^- \om +d_1 \om^\perp, \quad D^{(2)}=d_0^+ \om +d_2 \om^\perp,\end{equation}
\begin{equation}\label{2DTM-H} H_{3}^{(1)}=H_{3,-}+\varphi(d_-,d_1,d_0^-), \quad H_{3}^{(2)}=H_{3,+}-\varphi(d_+,d_2,d_0^+),\end{equation}
\begin{equation}H_{3}^{(1)}=H_{3}^{(2)}, \quad f(d_1,d_0^-)=f(d_2,d_0^+). \nonumber\end{equation} 
Therefore, $(d_1,d_2)$ is solution of a two by two system which is similar to (\ref{223D}):
\begin{equation}\label{222DTM}
\left\{
\begin{array}{l}
f(d_1,d_0^-)=f(d_{2},d_0^+)\, ,\\ \\
H_{3,-}+\varphi(d_-,d_1,d_0^-) =H_{3,+}-\varphi(d_+,d_2,d_0^+).
\end{array}
\right.
\end{equation}
\subsubsection{Sufficient conditions.} 
\begin{lemma}\label{lemm2DTM22}
The system (\ref{222DTM}) has a unique solution $(d_1,d_2) \in \R^2$.
\end{lemma}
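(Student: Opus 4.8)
The plan is to reduce system (\ref{222DTM}) to a single scalar equation in one unknown, exactly as in the proof of Lemma \ref{fctimpl}, and then invoke the monotonicity and limit properties of the wave function $\varphi$ established above (the analogue of Proposition \ref{decphi}). First I would fix the data $d_0^\pm$, $d_\pm$, $H_{3,\pm}$ and, as in Lemma \ref{fctimpl}, introduce $f_\pm = f(\cdot,d_0^\pm)$, which by \cite{dabh-CMS} are increasing $C^1$-diffeomorphisms of $\R$ onto $\R$ fixing $0$. Then $G = f_+^{-1}\circ f_-$ is an increasing $C^1$ bijection of $\R$ with $G(0)=0$, and the first equation of (\ref{222DTM}) is equivalent to $d_2 = G(d_1)$. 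Substituting into the second equation, solving (\ref{222DTM}) is equivalent to finding $d_1 \in \R$ with
\[
H_{3,+}-H_{3,-} = \varphi(d_-,d_1,d_0^-) + \varphi(d_+,G(d_1),d_0^+).
\]

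Next I would analyze the right-hand side as a function of $d_1$. Each summand $\varphi(d_-,d_1,d_0^-)$ is, by the stated properties of $\varphi$, a decreasing $C^1$ function of $d_1$ with $\lim_{d_1\to\pm\infty}\varphi(d_-,d_1,d_0^-) = \mp\infty$ (this is (\ref{varphienzero})). Since $G$ is an increasing bijection of $\R$, the composition $d_1 \mapsto \varphi(d_+,G(d_1),d_0^+)$ is likewise decreasing and $C^1$, with the same limits $\mp\infty$ as $d_1\to\pm\infty$ because $G(d_1)\to\pm\infty$. Hence the sum is a continuous, strictly decreasing function of $d_1$ ranging over all of $\R$; by the intermediate value theorem together with strict monotonicity, there is exactly one $d_1$ solving the displayed equation. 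Setting $d_2 = G(d_1)$ then gives the unique solution $(d_1,d_2)\in\R^2$ of (\ref{222DTM}). One also records that $d_2$ has the sign of $f_-(d_1)$, which will be needed when checking the waves are well-defined.

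I expect the only genuine subtlety to be confirming that $\varphi$ has the monotonicity and surjectivity properties claimed just before Proposition \ref{lesondes2DTM}; this is asserted there by analogy with Proposition \ref{decphi}, and a clean write-up should either cite that assertion or sketch why the $C^1$ matching at $d_2 = d_-$ and $d_2 = d_*(d_1)$ holds (matching of one-sided derivatives to $-\lambda$, exactly as in the proof of Proposition \ref{decphi}). The possible sign changes of $d_1$, $d_2$ — unlike the $6\times 6$ case where these parameters were constrained to be nonnegative — are handled automatically because $\varphi$, $G$ and the limits in (\ref{varphienzero}) are defined and monotone on all of $\R$, so no case distinction on signs is required. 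The remaining verifications (that $u^{(1)}$, $u^{(2)}$ built from this solution via (\ref{2DTM-forme}), (\ref{2DTM-H}) are indeed joined to $u_\pm$ by admissible $1$- and $3$-waves and to each other by a stationary contact discontinuity satisfying (\ref{rhstat})) are then immediate from Propositions \ref{lesondes2DTM} and \ref{ddc0}, as in the $6\times6$ case, and need not be part of the proof of this lemma.
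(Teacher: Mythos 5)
Your proposal is correct and follows essentially the same route as the paper: reduce (\ref{222DTM}) via $G=f_+^{-1}\circ f_-$ to the scalar equation $H_{3,+}-H_{3,-}=\varphi(d_-,d_1,d_0^-)+\varphi(d_+,G(d_1),d_0^+)$ and conclude by the monotonicity and limit properties (\ref{varphienzero}) of $\varphi$, noting that here $G$ is defined on all of $\R$ with $G(\R)=\R$. Your write-up merely spells out the monotonicity/intermediate-value argument that the paper compresses into ``we end the proof by using the properties of $\varphi$.''
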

\begin{proof} 
The values of $d_0^\pm$, $d_\pm$, $H_{3,\pm}$ are fixed. We define $f_\pm$ as in the proof of lemma \ref{fctimpl}, and $G=f_+^{-1} \circ f_-$, which is a $C^1$ increasing one-to-one function such that $G(0)=0$. Here we need to define $G$ on $\R$, and $G(\R)=\R$.

Solving system (\ref{222DTM}) is equivalent to find $d_1$ such that
\[ 
H_{3,+}-H_{3,-} =\varphi(d_-,d_1,d_0^-)+\varphi(d_+,G(d_1),d_0^+).
\]
We end the proof by using the properties of $\varphi$.
\end{proof}
In the following theorem we sum up those considerations and we make the link between $2 \times 2$ and $3 \times 3$ solutions.
\begin{theorem}\label{exis-sol2DTM}
Let $u_-$, $u_+$ be a Riemann data for system (\ref{33TM}) in the direction $\omega$. The Riemann problem has a unique solution in the class of the functions which are superpositions of a 1-wave, a stationary contact discontinuity and a 3-wave. 

The intermediate states $u^{(1)}$ and $u^{(2)}$ are defined by (\ref{2DTM-forme}), (\ref{2DTM-H}), and the solution of system (\ref{222DTM}).

For Riemann data of the form $u_\pm=(0,D_{2,\pm},H_{3,\pm})$, the solution has the form $(0,D_2,H_3)$, the stationary contact discontinuity is trivial and $(D_2,H_3)$ is the Liu's solution of the p-system (\ref{kerr}) for data $(D_{2,\pm},H_{3,\pm})$.
\end{theorem}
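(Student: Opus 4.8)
The plan is to establish existence and uniqueness first, then to verify the structural claims about the intermediate states, and finally to identify the reduced $2\times 2$ problem inside the $3\times 3$ one.

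\emph{Existence and uniqueness.} I would begin by invoking Lemma~\ref{lemm2DTM22}: for the given data $u_-$, $u_+$, the system (\ref{222DTM}) has a unique solution $(d_1,d_2)\in\R^2$. Given this pair, define $u^{(1)}$ and $u^{(2)}$ by (\ref{2DTM-forme}) and (\ref{2DTM-H}). By Proposition~\ref{lesondes2DTM}, $u_-$ and $u^{(1)}$ are then connected by a $1$-wave and $u^{(2)}$, $u_+$ by a $3$-wave, since the $H_3$-increments are precisely $\varphi(d_-,d_1,d_0^-)$ and $\varphi(d_+,d_2,d_0^+)$ and the $D$-components have the required form $d_0^\pm\om + (\cdot)\om^\perp$. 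The second equation of (\ref{222DTM}), $f(d_1,d_0^-)=f(d_2,d_0^+)$, is exactly the condition $[\om\pv E]=0$ (by the definition (\ref{fonctionf}) of $f$ and formula (\ref{defP})), while the first component of the equality $H_3^{(1)}=H_3^{(2)}$ forced by (\ref{2DTM-H}) is $[\om\pv H]=0$; these are conditions (\ref{rhstat}), so $u^{(1)}$ and $u^{(2)}$ are connected by a stationary contact discontinuity (Proposition~\ref{ddc0}). This gives existence. For uniqueness within the prescribed class, I would run the necessary-conditions argument in reverse: any solution that is a superposition of a $1$-wave, a stationary contact discontinuity, and a $3$-wave must have intermediate states of the form (\ref{2DTM-forme}) (because the stationary contact discontinuity preserves $D\cdot\om$ on each side, forcing $D^{(1)}\cdot\om=d_0^-$, $D^{(2)}\cdot\om=d_0^+$, and each non-stationary wave keeps $D$ in the plane spanned by $\om$ and $\om^\perp$ by Propositions~\ref{hugo-TM}, \ref{prop-rare-2DTM}), must satisfy (\ref{2DTM-H}) by Proposition~\ref{lesondes2DTM}, and must satisfy the two equalities $H_3^{(1)}=H_3^{(2)}$ and $f(d_1,d_0^-)=f(d_2,d_0^+)$ by (\ref{rhstat}); hence $(d_1,d_2)$ solves (\ref{222DTM}) and is unique by Lemma~\ref{lemm2DTM22}.

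\emph{The reduced $p$-system.} Suppose now $u_\pm=(0,D_{2,\pm},H_{3,\pm})$ and $\om=(1,0)$, so $d_0^\pm = D_\pm\cdot\om = 0$ and $\om^\perp=(0,1)$, giving $d_\pm = D_{2,\pm}$. Then $f_\pm = f(\cdot,0)$ are the \emph{same} increasing diffeomorphism, so $G=f_+^{-1}\circ f_-$ is the identity and the first equation of (\ref{222DTM}) forces $d_1=d_2=:d$. The second equation becomes $H_{3,-}+\varphi(d_-,d,0)=H_{3,+}-\varphi(d_+,d,0)$, which is precisely the scalar equation determining the intermediate $D_2$-value in the Liu solution of the $p$-system (\ref{kerr}); consequently $d_0^\pm=0$ makes $D^{(1)}=D^{(2)}=(0,d)$, the stationary contact discontinuity is trivial, and the resulting profile in the $(D_2,H_3)$ variables coincides with the $1$-wave / $3$-wave solution of (\ref{kerr}), which by Proposition~\ref{prop-liu} uses exactly Liu-admissible shocks. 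I would note that $\varphi(\cdot,\cdot,0)$ is by construction (\ref{wave2DTM}) the wave function of the $p$-system, so no new computation is needed — the identification is formal once $d_1=d_2$ is established.

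\emph{Main obstacle.} The genuinely substantive point is Lemma~\ref{lemm2DTM22}, whose proof rests on the monotonicity and limiting behaviour (\ref{varphienzero}) of $\varphi$ in its second argument; that in turn requires knowing that $\varphi$ is $C^1$ and strictly decreasing across the transition from rarefaction to Liu-shock to composed wave, analogous to Proposition~\ref{decphi} but now over all of $\R$ rather than $\R_+$ (since the sign condition on $d_+$ has disappeared, per the discussion after Proposition~\ref{hugo-TM}). Since Lemma~\ref{lemm2DTM22} and the properties of $\varphi$ are already stated above, the remaining work in proving Theorem~\ref{exis-sol2DTM} is essentially bookkeeping: checking that the Rankine--Hugoniot jump conditions (\ref{rhstat}) translate into the two scalar equations of (\ref{222DTM}), and that the degenerate case $d_0^\pm=0$ collapses the $3\times 3$ solution onto the $2\times 2$ one. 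I expect the only place where care is needed is making sure the orientation conventions for $\om^\perp$ and the signs in (\ref{2DTM-H}) are consistent between the $1$-wave and $3$-wave branches, so that the equation $H_3^{(1)}=H_3^{(2)}$ is exactly (\ref{222DTM}) and not its sign-flipped variant.
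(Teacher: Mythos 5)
Your proposal follows essentially the same route as the paper: the necessary conditions reduce the problem to the two-by-two system (\ref{222DTM}), Lemma \ref{lemm2DTM22} gives existence and uniqueness of $(d_1,d_2)$, sufficiency comes from reconstructing $u^{(1)}$, $u^{(2)}$ via (\ref{2DTM-forme})--(\ref{2DTM-H}) and checking (\ref{rhstat}), and your explicit treatment of the case $d_0^\pm=0$ (where $G=f_+^{-1}\circ f_-$ is the identity, so $d_1=d_2$, the stationary contact is trivial, and $\varphi(\cdot,\cdot,0)$ is the p-system wave function built from Liu-admissible shocks) is precisely the link with (\ref{kerr}) that the theorem asserts. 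One minor slip that does not affect the argument: it is the non-stationary $1$- and $3$-waves that preserve $D\cdot\om$ (Proposition \ref{hugo-TM} and the rarefaction construction), forcing $D^{(1)}\cdot\om=d_0^-$ and $D^{(2)}\cdot\om=d_0^+$, whereas the stationary contact discontinuity is exactly where $D\cdot\om$ jumps from $d_0^-$ to $d_0^+$ — your parenthetical attributes this the other way around.
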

\subsection{Comparison of the $6 \times 6$ solution with the $3 \times 3$ and $2 \times 2$ ones} \label{compar}
For $u=(D_1,D_2,H_3)$, we denote ${\overline u}=({\overline D},{\overline H})$, where ${\overline D}=(D_1,D_2,0)$, ${\overline H}=(0,0,H_3)$.

As observed in paragraph \ref{66vpoint}, if the Riemann data are Transverse Magnetic, so is the solution ${\overline u}$ of system (\ref{Kerr3D}), and the related $u$ is a weak solution of the $3 \times 3$ system (\ref{33TM}). But if the non stationary contact discontinuities are not trivial for ${\overline u}$, $u$ is not the Liu's solution of (\ref{33TM}). 

For example, we can find a non trivial Tranverse Magnetic 6-contact discontinuity for system (\ref{Kerr3D}). We choose $\om=(1,0,0)$,
\[ 
{\overline D_-}=\left( \begin{array}{c}
0\\ D_{2,-} \\ 0 
\end{array}
\right), \quad {\overline H_-}=\left( \begin{array}{c}
0\\0\\H_{3,-}
\end{array}
\right), \quad
{\overline D_+}=-{\overline D_-},
\]
and, $\sigma_+$ being defined by (\ref{sigmapm}):
\[ 
{\overline H_+}={\overline H_-}+\sigma_+ \om \pv ({\overline D_+}-{\overline D_-})=\left( \begin{array}{c}
0\\ 0\\ H_{3,-}+\sigma_+ (D_{2,+} -D_{2,-} )
\end{array}
\right).
\]
The solution of the $3 \times 3$ Riemann problem for system (\ref{33TM}) with data $u_\pm=(0,D_{2,\pm},H_{3,\pm})$ cannot be such a contact discontinuity. The solution consists of a 1-wave and a 3-wave. Such solutions are compared in Figures \ref{xm33}, see section \ref{num-sect} for the numerical details. Here, the 1-wave is a rarefaction, while the 3-wave is composed by a shock connecting $u_+$ and $u^{*}(u_+)$, and a rarefaction connecting $u^{*}(u_+)$ and $u^{(2)}=u^{(1)}$. 

Consequently one faces two distinct solutions of the problem. This is not contrary to known results. In particular, we point out the fact that, as usual for such problems, uniqueness in theorems \ref{exis-sol} and \ref{exis-sol2DTM} holds only in a definite class of solutions. 

In order to choose the physical solution, we study the electromagnetic energy of each of them. For the reduced case (\ref{33TM}), still denoting $E=\E(D)$, the energy density reads as (\cite{c-h2}):
\[ \eta(D,H_3)=\Ecal(D)+\ud \mu_0 H_3^2, \quad \Ecal(D)=\epsilon_0 (|E|^2+\frac{3 \epsilon_r}{2}|E|^4).\] 
Actually $\eta$ is a mathematical entropy for Kerr system, with entropy flux
\[ Q(D,H_3)=H_3(E_2,-E_1 ).\]
As well known, contact discontinuities and rarefactions preserve entropy, see \cite{serre} for example. Let us study what happens for Liu's shocks. 

A shock $(\sigma, u_-,u_+)$ is entropy dissipative if 
\[
\partial_t \eta(u) + {\rm div} Q(u) \leq 0
\]
in a weak sense. This inequality also reads as
\begin{equation}\label{entrdiss}
-\sigma [\eta(D,H_3)] + [ H_3 \om \pv \E(D)]\leq 0.
\end{equation} 
\begin{theorem}\label{dissi2DTMtheo}{\bf Entropy dissipation for Liu's shocks.}

Let $(\sigma,u_-,u_+,-)$ be a Liu's shock. The entropy dissipation inequality (\ref{entrdiss}) holds. 

In the particular case $D_\pm \cdot \om=0$, denoting $e=p(d)$, $D=d \om^\perp$, the amount of entropy dissipation is
\begin{equation}\label{dissi-Liu}
-\sigma [\eta(D,H_3)]+[H_3 e]=-\frac{c\;  \epsilon_0 \epsilon_r}{4\sqrt{1+\epsilon_r(e_+^2+e_+ e_- +e_-^2})}  [e]^2 \;\left|  [e^2]\right| \leq  0.
\end{equation}
\end{theorem}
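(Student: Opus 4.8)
The plan is to reduce the general Liu shock to a chain of elementary pieces and then to the 1D $p$-system case, where an explicit computation is feasible. First I would exploit the structure of the wave curve $\varphi$ from \eqref{wave2DTM}: a Liu 1-shock with $d_+\in[d_*(d_-),d_-]$ is a single Lax-admissible shock, so it suffices to prove \eqref{entrdiss} for such shocks (the 3-shocks follow by the symmetry of Proposition \ref{symmetry-choc}, under which $\eta$ is invariant and $Q$ changes sign consistently with $\sigma\mapsto-\sigma$). Because non-stationary shocks in the TM system are divergence free (Proposition \ref{hugo-TM}), we may write $D_\pm=d_0\om+d_\pm\om^\perp$ with the \emph{same} $d_0$, and $|E_\pm|=p(\sqrt{d_0^2+d_\pm^2})$; thus $\eta$, $Q$ and $\sigma$ all become functions of $(d_-,d_+,d_0)$ only. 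The entropy dissipation rate $\mathcal D(d_+)=-\sigma[\eta]+[H_3\,\om\pv\E(D)]$ is then a function of the single endpoint parameter $d_+$ (with $d_-$, $d_0$ fixed), and one has $\mathcal D(d_-)=0$.

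Next I would differentiate $\mathcal D$ with respect to $d_+$ along the Hugoniot curve. Using $H_{3,+}=H_{3,-}+\sigma(d_+-d_-)$ and $\sigma^2=\bigl(f(d_+,d_0)-f(d_-,d_0)\bigr)/(d_+-d_-)$ from \eqref{sigmaTM}, together with $\partial_1 f=\lambda^2$ from \eqref{deriv-f} and $\om\pv\E(D)=f(\om\pv D,d_0)/c^2$ up to the correct normalization (this identity is what makes $f$ the relevant flux), the computation collapses: the classical identity for a scalar-like Hugoniot locus gives $\mathcal D'(d_+)=\tfrac12(d_+-d_-)\,\sigma'(d_+)\cdot(\text{something})$, so that $\mathcal D'$ and $\sigma'$ have opposite signs. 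Liu's condition (E) says exactly that $\sigma(u,u_-)\geq\sigma(u_+,u_-)$ for $u$ between, i.e. $\sigma$ attains its minimum over the sub-arc at $u_+$; combined with $\mathcal D(d_-)=0$ this forces $\mathcal D(d_+)\leq0$. The main obstacle I anticipate is getting the algebra of this monotonicity identity exactly right — in particular matching the constitutive relation $\E$, the function $f$, and the entropy pair $(\eta,Q)$ so that the telescoping works, and handling the case where the Liu shock passes through $\om\pv D=0$ (where $f$ changes convexity and $\lambda$ is merely continuous, not smooth, as a function of $d$ through $0$ only if $d_0=0$). For $d_0\neq0$ everything is smooth and the argument is clean; for $d_0=0$ one checks the one-sided derivatives match, as in Proposition \ref{decphi}.

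Finally, for the explicit formula \eqref{dissi-Liu} I would specialize to $D_\pm\cdot\om=0$, so $d_0=0$, $D=d\om^\perp$, $e=p(d)$, and $|E|=|e|$. Here $\E(D)\cdot\om^\perp=e$ (in the sense $\om\pv\E(D)=e$), $\sigma^2=f'(d)$-type expression becomes $\sigma^2=c^2(e_+-e_-)/\bigl(q(e_+)-q(e_-)\bigr)$ after the change of variable $d=q(e)/\epsilon_0\cdot(\dots)$—more precisely using $d=q(e)$ up to the $\epsilon_0$ normalization in \eqref{defq}. Then $[\eta]=\epsilon_0\bigl([e^2]+\tfrac{3\epsilon_r}{2}[e^4]\bigr)+\tfrac12\mu_0[H_3^2]$, $[H_3]=\sigma[d]=\sigma\epsilon_0\bigl([e]+\epsilon_r[e^3]\bigr)$, and I substitute and simplify. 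The sign is fixed by the Liu condition (which here forces $e_+e_-\le 0$, equivalently $d_+d_-\le0$, giving $e_+^2+e_+e_-+e_-^2>0$ and the velocity $\sigma<0$); factoring yields $-\sigma[\eta]+[H_3e]=\dfrac{-\,c\,\epsilon_0\epsilon_r}{4\sqrt{1+\epsilon_r(e_+^2+e_+e_-+e_-^2)}}\,[e]^2\,\bigl|[e^2]\bigr|$, which is manifestly $\le0$. The routine but delicate part is the polynomial factorization $[e^2]+\tfrac{3\epsilon_r}{2}[e^4]-\sigma^2\epsilon_0^{-1}\mu_0^{-1}\cdots$ collapsing to a perfect square times $[e]^2|[e^2]|$; I expect this to come out after using $q(e_+)-q(e_-)=(e_+-e_-)\bigl(1+\epsilon_r(e_+^2+e_+e_-+e_-^2)\bigr)\epsilon_0$ and patiently reducing.
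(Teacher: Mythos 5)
Your overall skeleton is the paper's: restrict to a single Liu 1-shock, parametrize the Hugoniot curve by the endpoint $d$, differentiate the dissipation, invoke Liu's condition, and finish the $d_0=0$ case by explicit algebra. But the two places where the real content lies are exactly where your argument has gaps. First, the monotonicity step. Your claimed identity ``$\mathcal D'(d_+)=\tfrac12(d_+-d_-)\sigma'(d_+)\cdot(\text{something})$, so $\mathcal D'$ and $\sigma'$ have opposite signs'' is both unproven and wrongly signed: the classical Hugoniot identity gives $\frac{d}{dd}\bigl(-\sigma[\eta]+[Q]\bigr)=-\sigma'(d)\bigl(\eta(u(d))-\eta(u_-)-\nabla\eta(u(d))\cdot(u(d)-u_-)\bigr)$, and convexity of $\eta$ makes the bracket nonpositive, so the dissipation derivative has the \emph{same} sign as $\sigma'$; with your stated sign the argument would produce entropy \emph{growth}. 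Moreover, even with the correct sign, ``$\sigma$ attains its minimum over the sub-arc at $u_+$'' does not by itself sign $\int\sigma'(s)\,|g(s)|\,ds$ when $\sigma'$ changes sign and the weight varies: you need either the monotonicity of $\sigma$ along the whole Liu sub-arc (true here because every intermediate $d\in[d_*(d_-),d_-]$ is itself Liu-admissible by Proposition \ref{prop-liu}, but this must be said and used), or the paper's more direct route, which avoids $\sigma'$ altogether: write the dissipation as $-\sigma\,\Dcal(d)$ with $\Dcal(d)=\Ecal(D(d))-\Ecal(D_-)-\tfrac12\mu_0(d-d_-)\bigl(f(d,d_0)+f(d_-,d_0)\bigr)$, compute $\Dcal'(d)=\tfrac12\mu_0(d_--d)\bigl(\partial_1 f(d,d_0)-\frac{f(d_-,d_0)-f(d,d_0)}{d_--d}\bigr)$ using $\nabla_D\Ecal=E$ and \eqref{deriv-f}, and sign this chord-versus-tangent difference on $[d_*(d_-),d_-]$ directly from the definition of $d_*$ and the concavity/convexity of $f$.

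Second, the explicit formula \eqref{dissi-Liu}. Your sign discussion is incorrect: Liu's condition does \emph{not} force $e_+e_-\le 0$; the Liu interval $[d_*(d_-),d_-]$ contains the whole Lax branch $0\le d_+\le d_-$ (for $d_->0$), where $e_+$ and $e_-$ have the same sign. What Liu's condition actually provides, and what the absolute value in \eqref{dissi-Liu} needs, is $|d_+|\le |d_-|$, hence $|e_+|\le|e_-|$, i.e.\ $[e^2]\le 0$. The computation (which you leave as ``I expect this to come out'') reduces, after the $O(1)$ terms cancel, to dissipation $=-\sigma\,\frac{\epsilon_0\epsilon_r}{4}[e]^2[e^2]$ with $\sigma^2=c^2\bigl(1+\epsilon_r(e_+^2+e_+e_-+e_-^2)\bigr)^{-1}$, and the stated form follows from $\sigma<0$ together with $[e^2]\le0$. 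Note also that the cancellation of the quadratic terms hinges on the exact normalization making $\nabla_D\Ecal=E$ (a factor slip there destroys it), so this step should be carried out, not presumed.
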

\begin{proof} We write the proof for a 1-shock with $D_- \cdot \om^\perp > 0$, the other cases are similar. The Liu's 1-shock curve for given $u_-$ is parametrized by $d \in [d_*(d_-),d_-]$ as
\begin{equation}\label{defu2DTM}
u(d)=\left(
\begin{array}{c}
d_0 \om + d \om^\perp \\
H_{3,-} + \sigma(d,d_-)(d-d_-)
\end{array}
\right), \quad \sigma < 0, \quad \sigma^2=\frac{f(d,d_0)-f(d_-,d_0)}{d-d_-}.
\end{equation} 
For such a $u$, $\om \pv \E(D)=\mu_0 f(d,d_0)$, so that (\ref{entrdiss}) reads as
\[
-\sigma (\eta(u(d)) -\eta(u(d_-)))+\mu_0 f(d,d_0)(H_{3,-} + \sigma(d-d_-))-\mu_0f(d_-,d_0)H_{3,-}  \leq 0.
\]
We denote $-\sigma \Dcal(d)$ the left-hand-side of this inequality. $\Dcal(d_-)=0$ and
\[
\Dcal(d)=\Ecal(d_0 \om + d \om^\perp)-\Ecal(d_0 \om + d_- \om^\perp)-\ud \mu_0(d-d_-)(f(d,d_0)+f(d_-,d_0)).
\]
Using the fact that $\Ecal^\prime(D,H_3)=(E_1,E_2,\mu_0 H_3)$, we find
\[\Dcal^\prime(d)=\ud \mu_0 (d_--d)\left(\partial_1f(d,d_0)-\frac{f(d_-,d_0)-f(d,d_0)}{d_--d}  \right).\]
The properties of $f$ and the definition of $d_*(d_-)$ allow us to conclude that $\Dcal^\prime(d) \geq 0$ for $d \in [d_*(d_-),d_-]$, and this proves the entropy dissipation property. 

In the case where $d_0=0$, $\Ecal(D)=\epsilon_0 e^2 (1+\frac{3 \epsilon_r}{2} e^2)$ and $\mu_0 f(d,0)=p(d)=e$, hence the result.\end{proof}
As Lax' shocks are also Liu's shocks, we conclude that we have found two distinct selfsimilar entropy (or energy) solutions of the Riemann problem for (\ref{33TM}). In the case of the above example, the $6 \times 6$ solution conserves the electromagnetic energy, while Liu's solution dissipates this energy by presence of a shock. Numerical experiments will bring more information about this problem, see section \ref{num-sect}.
\section{Numerical experiments}\label{num-sect}
We present one and two dimensional computations with Godunov scheme for the $6 \times 6$ Kerr system. The one-dimensional tests are concerned with comparisons to exact solutions of the Riemann problem. As a particular case, we investigate numerically the problem of the nonuniqueness of selfsimilar entropy TM solutions. 

The two-dimensional experiments are performed on a cartesian grid. We take Transverse Magnetic data $(D_1,D_2,H_3)$ but we use the $6 \times 6$ solver, see paragraph \ref{66vpoint}. The first case is concerned with a piecewise constant initial data for which one-dimensional waves remain visible. Then we study an ultrashort optical pulse proposed in \cite{ziol-jud}. 

In all cases, we also compare our results with those obtained by a Kerr-Debye relaxation scheme, see Annex.

The relative permittivity is $\epsilon_r=2.10^{-18}$. 

All the computations have been performed with a CFL number of 0.3. An important remark is that all the characteristic velocities are bounded by the light velocity $c=\sqrt{\epsilon_0 \mu_0}^{-1}$, so that we are able to fix a constant time step. All the results are obtained with a second order extension, in space by affine reconstructions with minmod limiters, in time by a second order Runge-Kutta scheme.
\subsection{One-dimensional cases}\label{one-d}
We fix the computation domain as $[-X,X]$ with $X=cT$, $T$ being the maximal time, so that if $N$ is the number of cells, then the number of time steps is $p=N/0.6$.

We first consider the Kerr system (\ref{Kerr3D}) with the following Riemann data:
\begin{equation}\label{data1D}
\begin{array}{lr}
D(x,0)=\left( \begin{array}{c}
0\\0.03\\0
\end{array}
\right) 
\quad {\rm if} \; \; x_1<0, & D(x,0)=\left( \begin{array}{c}
0.03\\0.04\\0.04
\end{array}
\right) \quad {\rm if} \; \; x_1>0, \\ \\
\mu_0 H(x,0)=\left( \begin{array}{c}
0\\0\\3
\end{array}
\right) 
\quad {\rm if} \; \; x_1<0, & \mu_0 H(x,0)=\left( \begin{array}{c}
0.001\\0\\3
\end{array}
\right) \quad {\rm if} \; \; x_1>0.
\end{array}
\end{equation}
This data is not divergence free. The solution only depends on $x=x_1$, $D_1(x,t)=D_1(x,0)$ and $H_1(x,t)=H_1(x,0)$. The scheme (\ref{int-VF-ns}) with (\ref{phigen}) reads as
\[\begin{array}{l}
D^{n+1}_{i}=D^{n}_{i}-\displaystyle \frac{\dt}{\dx}\left(
\begin{array}{c}
0 \\ H_{3,i+\ud}^n-H_{3,i-\ud}^n \\ -H_{2,i+\ud}^n+H_{2,i-\ud}^n 
\end{array}
\right), \\
H^{n+1}_{i}=H^{n}_{i}-\mu_0^{-1}\displaystyle \frac{\dt}{\dx}\left(
\begin{array}{c}
0 \\ -E_{3,i+\ud}^n+E_{3,i-\ud}^n \\ E_{2,i+\ud}^n-E_{2,i-\ud}^n 
\end{array}
\right).
\end{array}\]
As a consequence, $D_{1,i}^n=D_{1,i}^0$ and $H_{1,i}^n=H_{1,i}^0$, and the error for those components is only due to initial discretization of data. As $x=0$ is an interface between two cells, this error is zero. Hence we do not represent $D_1$ and $H_1$.

Figures \ref{xm11}-\ref{xm1} show respectively the components $(D_2,D_3)$ and $(H_2,H_3)$ at time $T=10$ femtoseconds, for 400 and 1600 cells. The exact solution consists of a 1-contact discontinuity, a 2-rarefaction, a nontrivial stationary contact discontinuity, a 5-shock and a 6-contact discontinuity. It is well retrieved by Godunov scheme. We have also tested the Kerr-Debye relaxation scheme (\ref{schemkd1D}) with (\ref{fluxkd1D}) presented in Annex. Both scheme give very close results. In Figure \ref{xm2}, $L^1$ relative errors with respect to the space step are depicted for each of them. We make the number of cell vary from 400 to 1600. The numerical order of accuracy is 0.66.
\begin{figure}
\centering
\includegraphics[width=0.6\linewidth,angle=0]{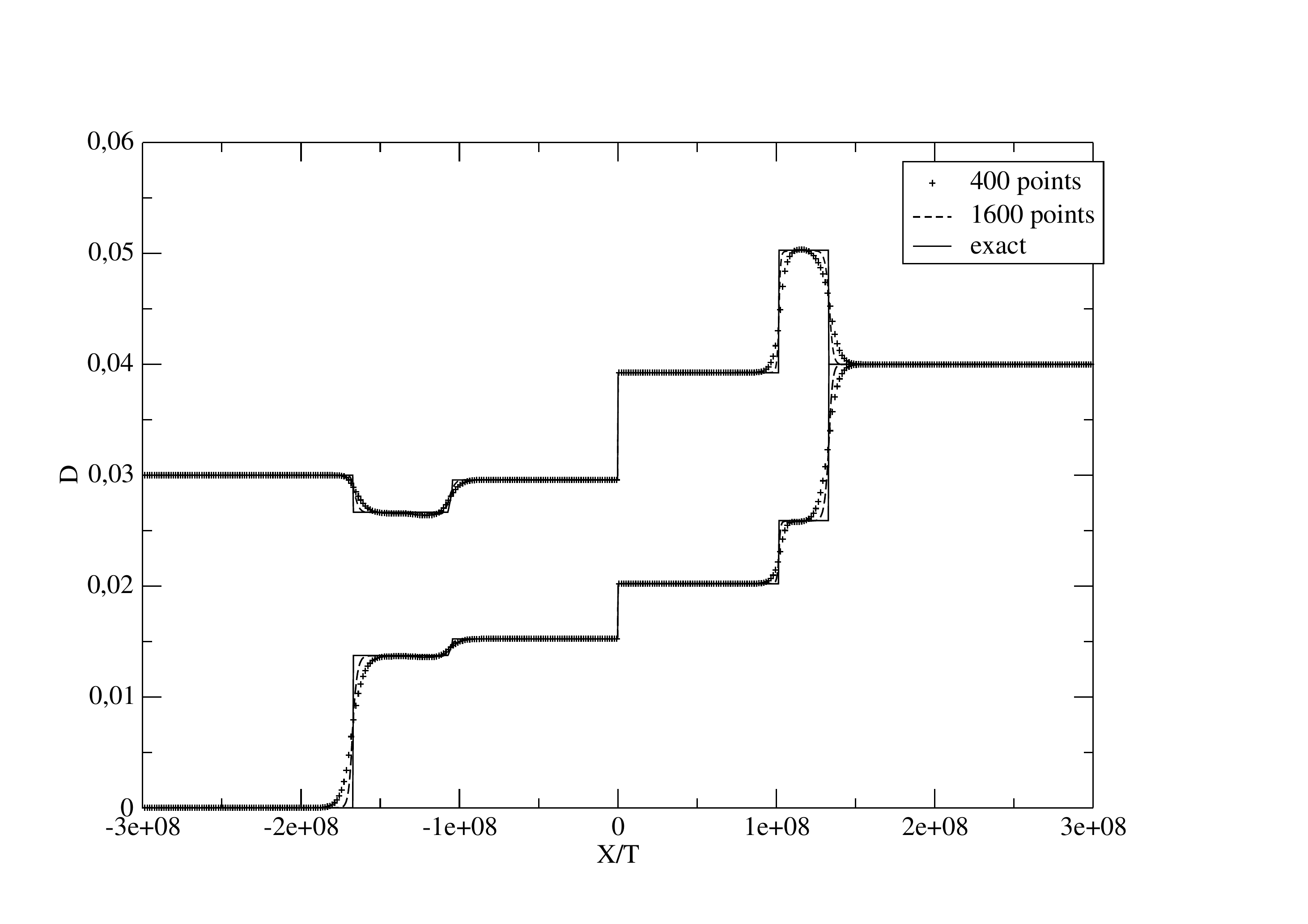}
\caption{One space dimension: $(D_2,D_3)$ for Riemann data (\ref{data1D}) afer 10 femtoseconds. Godunov scheme.}
\label{xm11}
\end{figure}
\begin{figure}
\centering
\includegraphics[width=0.6\linewidth,angle=0]{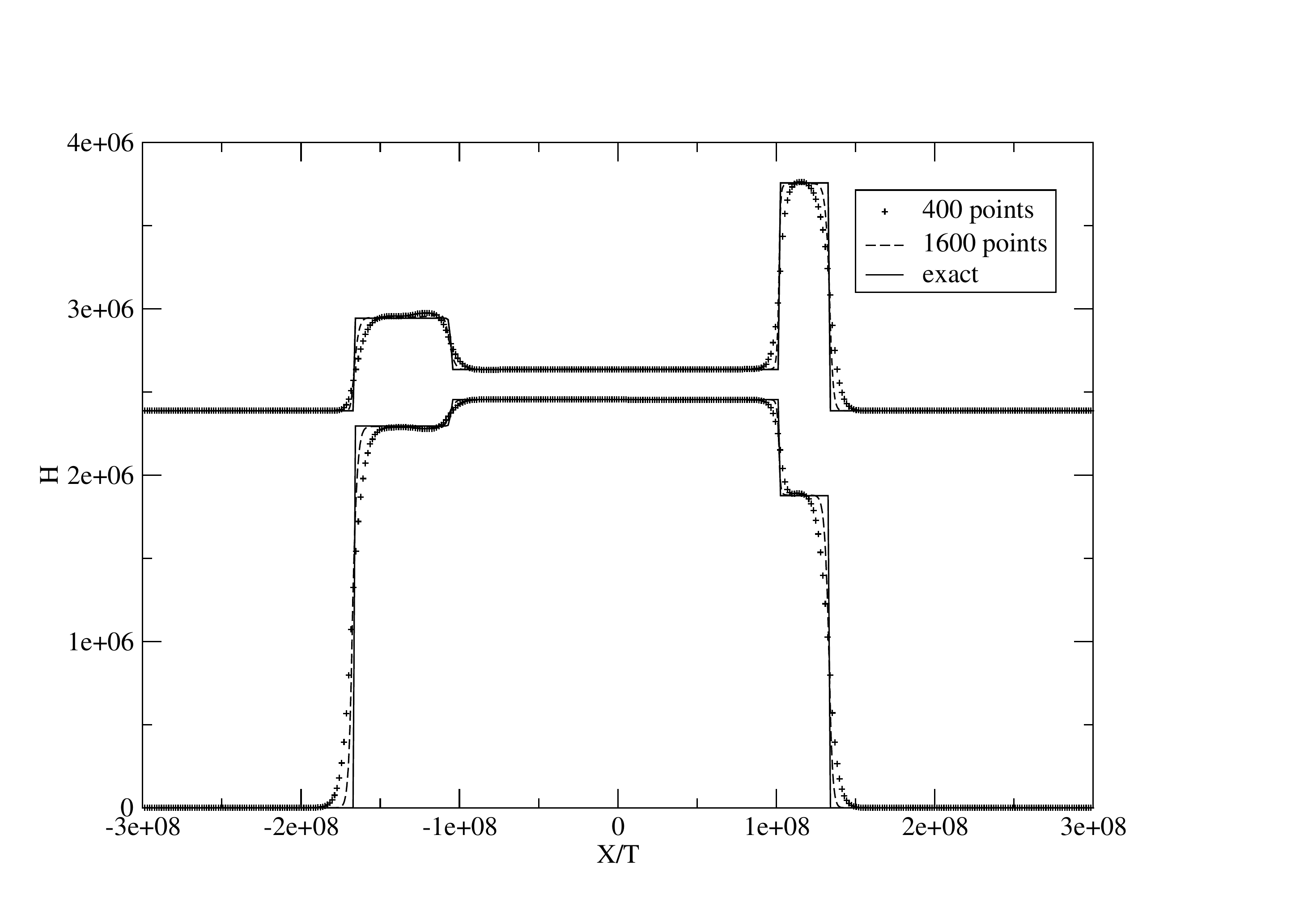}
\caption{One space dimension: $(H_2,H_3)$ for Riemann data (\ref{data1D}) afer 10 femtoseconds. Godunov scheme.}
\label{xm1}
\end{figure}
\begin{figure}
\centering
\includegraphics[width=0.6\linewidth,angle=0]{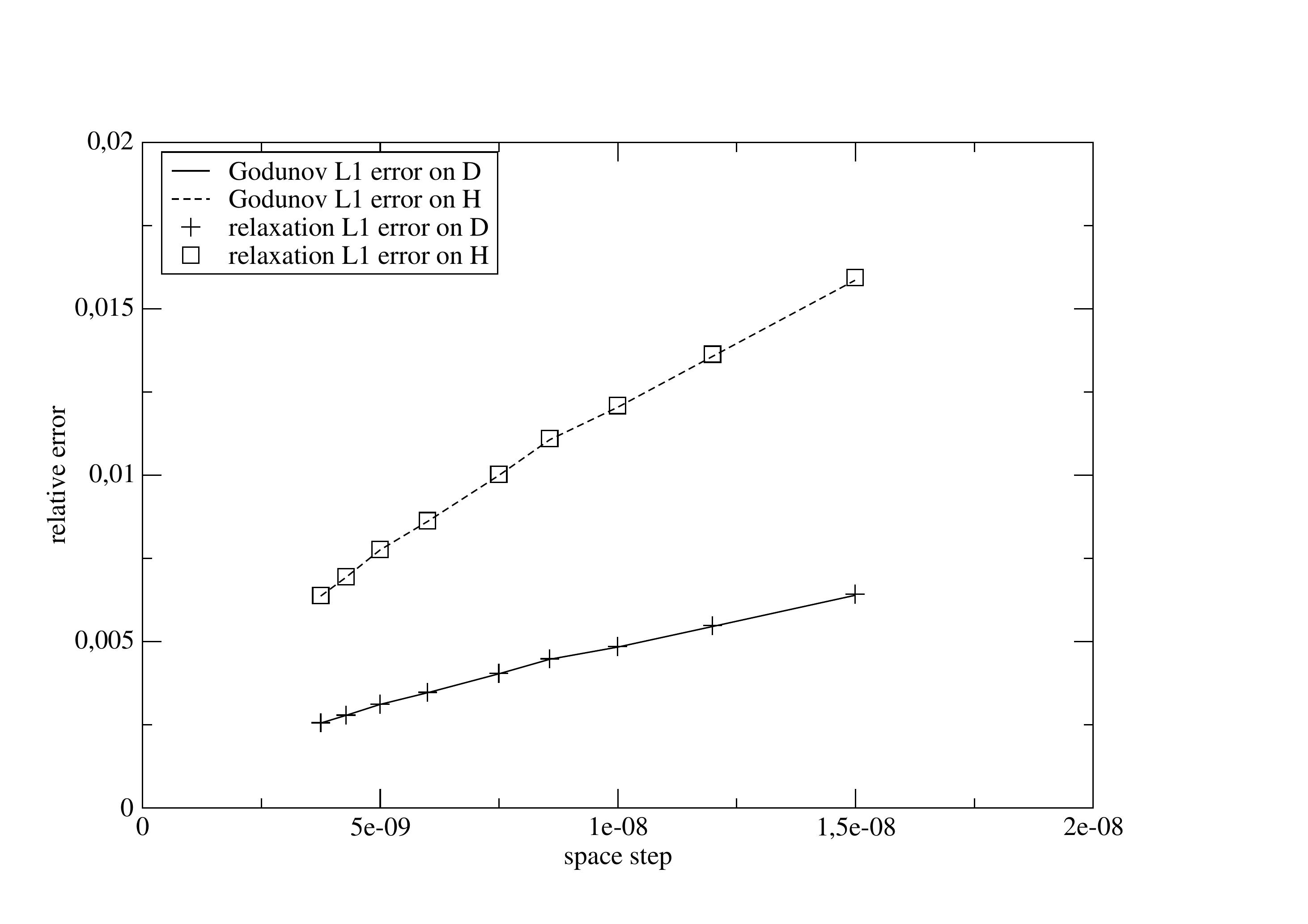}
\caption{One space dimension: relative $L^1$ errors for Godunov and Kerr-Debye relaxation schemes, for Riemann data (\ref{data1D}), from 400 to 1600 cells.}
\label{xm2}
\end{figure}
\\In a second series, we try to understand the problem of non uniqueness shown in paragraph \ref{compar}. We take a sequence $(D^{m},H^{m})$ of 6-contact discontinuities as follows: $\omega=(1,0,0)$, and for $m=1,\dots,12$: $\theta_m=\frac{m\pi}{12}$, 
\begin{equation}\label{data1D-cd}
\begin{array}{ll}
D^m(x,0)=D_-
\; {\rm if} \; x_1<0, & D^m(x,0)=D^m_+= R_m D_- \; {\rm if} \; \; x_1>0, \\ \\
H^m(x,0)=H_-  \; {\rm if} \; x_1<0, & H^m(x,0)=H^m_+ \; {\rm if} \; \; x_1>0,
\end{array}
\end{equation}
with $H^m_+=H_-+ \sigma_+ \om \pv (D^m_+ -D_-)$, $\sigma_+$ defined in (\ref{sigmapm}),
\[ D_- =\left( \begin{array}{c}
0\\0.03\\0
\end{array}
\right) , \quad \mu_0 H_-=\left( \begin{array}{c}
0\\0\\3
\end{array}
\right), \quad R_m=\left(
\begin{array}{ccc}
1& 0 & 0 \\ 
0& \displaystyle \cos \theta_m &- \displaystyle\sin \theta_m  \\ 
0&\displaystyle\sin \theta_m &\displaystyle\cos \theta_m
\end{array}
\right).
\]
When $m=12$ ($\theta_m=\pi$), we have a Transverse Magnetic field which is also a weak solution of the p-system (\ref{kerr}), and the entropy is conserved: denoting $e=p(0.03)$,
\[
-\sigma_+ [\eta(D,H_3)]+[H_3 e]=0.
\]
We have $|D_-|=|D_+^m|=0.03$, 
\[ 
|D_+^m-D_-| = |D_-|\sqrt{2(1-\cos\theta_m )} \,, 
\]
and
\[ 
|H^m_{3,+}-H_{3,-}| = \frac{c}{\sqrt{1+\epsilon_r e^2}} |D_+^m-D_-|,
\]
therefore $|u_+ - u_-|$ is an increasing function of $m$. 

Figure \ref{xm44} shows the evolution of $L^1$ relative error for $D$ and $H$ respectively. We can observe that this error increases with the rotation angle but it always converges to zero, except when $\theta_m=\pi$. As shown in Figure \ref{xm33}, in this case convergence holds to Liu's solution, which consists of a 1-rarefaction and a 2-wave composed by a 2-rarefaction and a 2-shock. The Kerr-Debye relaxation scheme gives the same results.
\begin{figure}
\centering
\includegraphics[width=0.49\linewidth,angle=0]{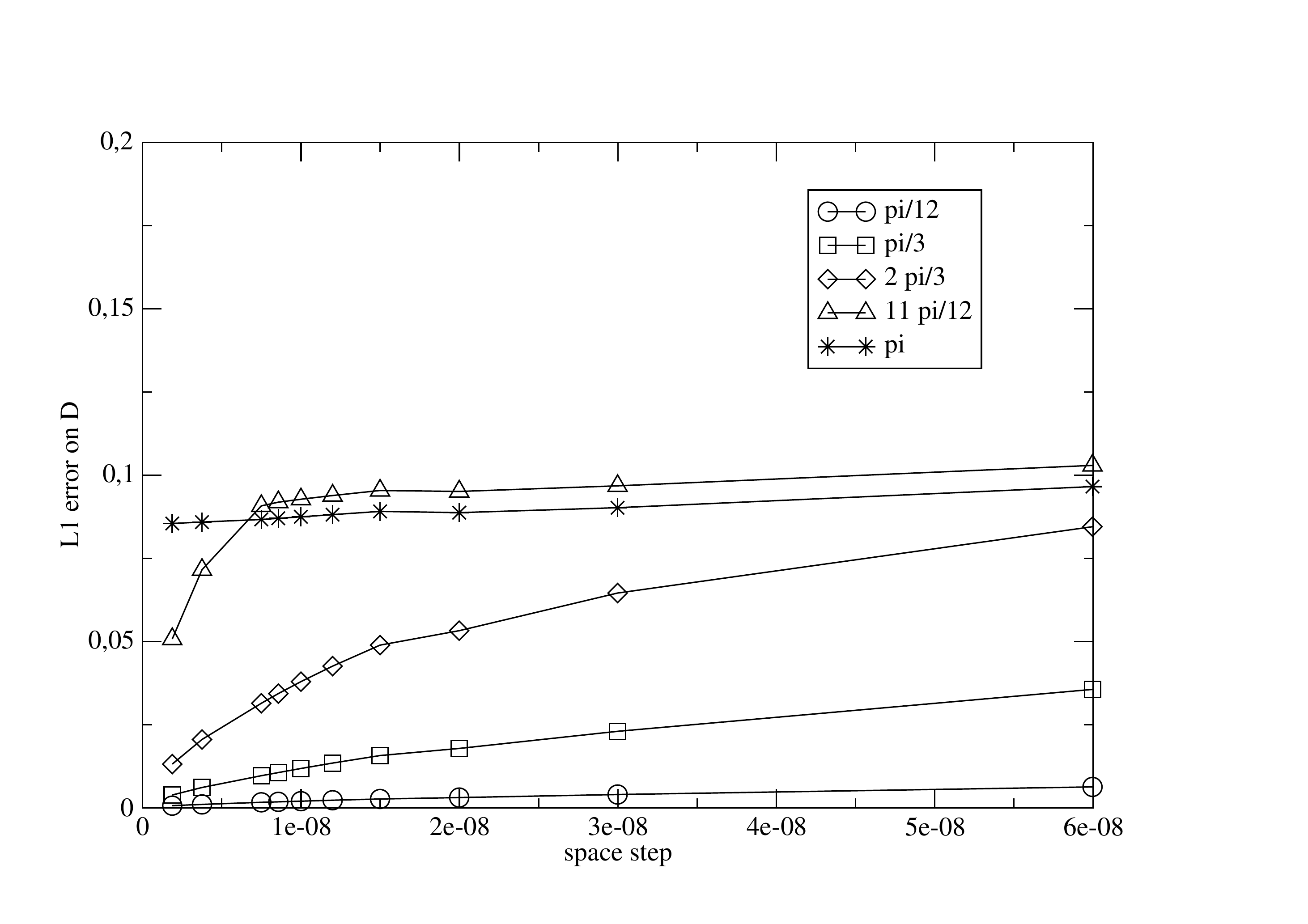}
\includegraphics[width=0.49\linewidth,angle=0]{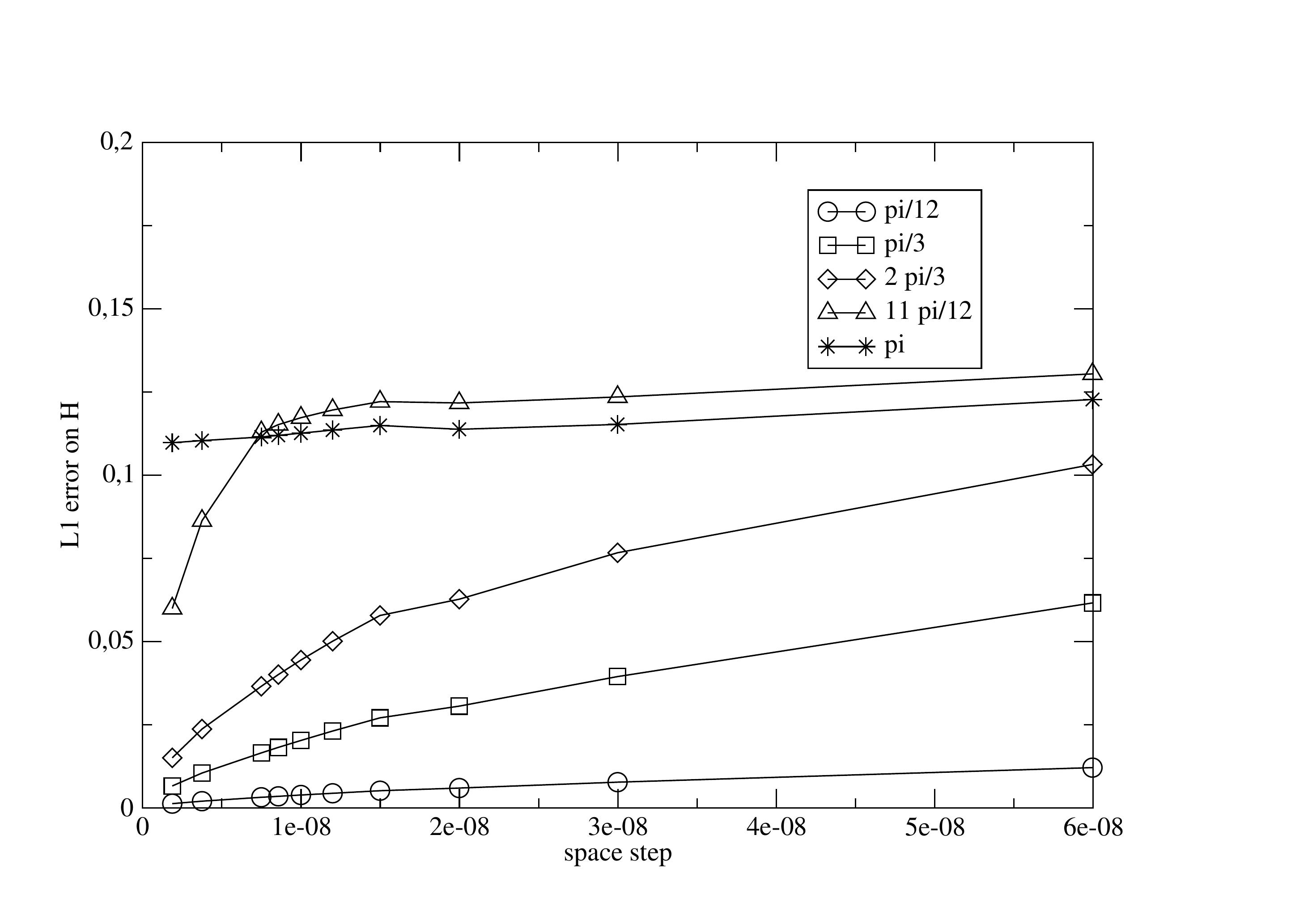}
\caption{$L^1$ relative error for $D$ (left) and $H$ (right) with respect to $\dx$: convergence holds except when reducing to a TM field.}
\label{xm44}
\end{figure}
\begin{figure}
\centering
\includegraphics[width=0.7\linewidth,angle=0]{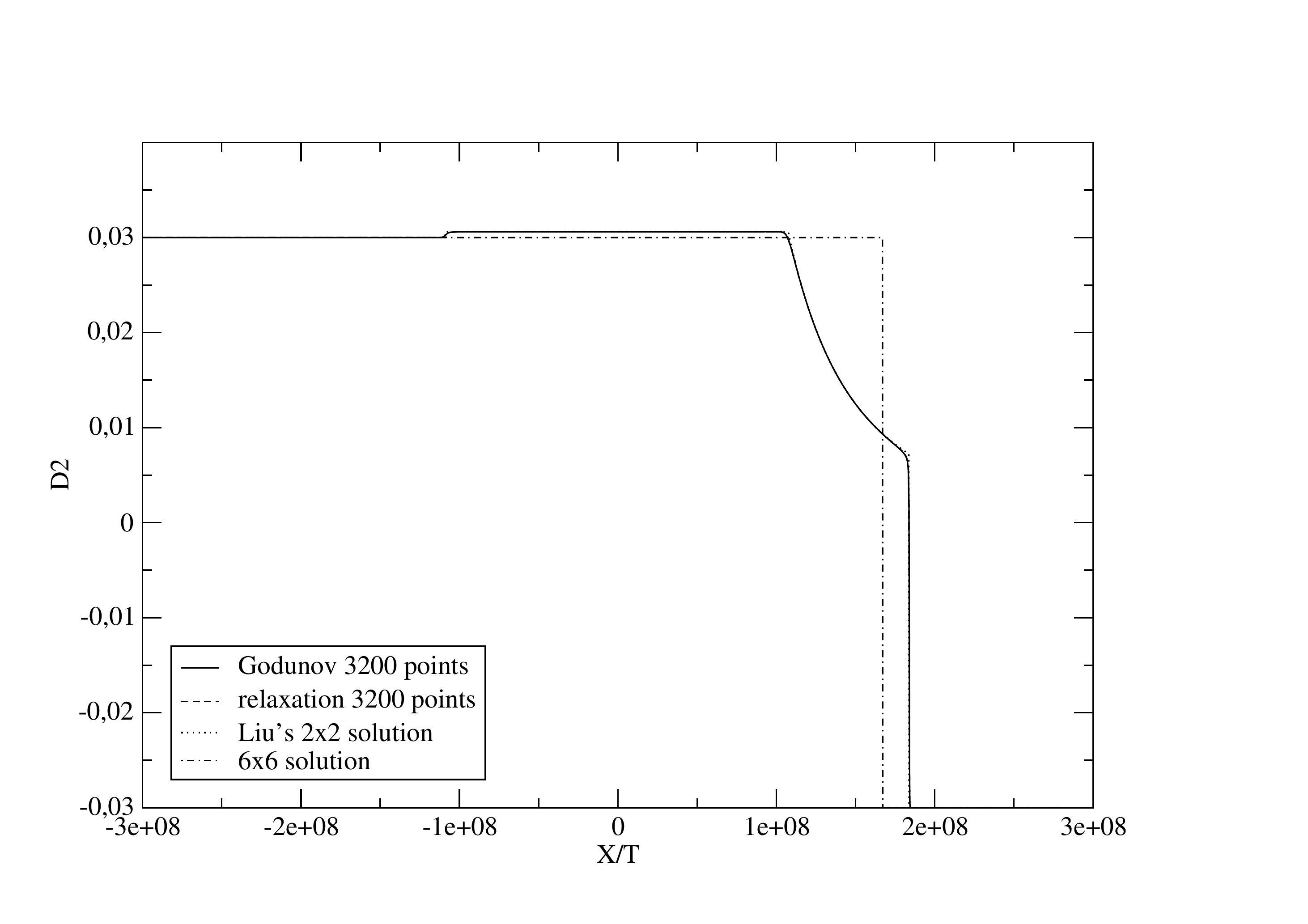}
\caption{D-component: Godunov and Kerr-Debye relaxation scheme both compute Liu's solution. H-component is similar.}
\label{xm33}
\end{figure}
\subsection{Two-dimensional cases}\label{two-d}
We restrict ourselves to computations of Transverse Magnetic fields on cartesian grids. As Riemann solver we take the $6 \times 6$ solution provided by theorem \ref{exis-sol}. We have also tested the $3 \times 3$ solution provided by theorem \ref{exis-sol2DTM}, but, as one can guess in view of one-dimensional tests, this solver gives the same results as the $6 \times 6$ one. The scheme can be written as
\[
\left\{
\begin{array}{l}
D^{n+1}_{1,ij}=D^{n}_{1,ij}+\frac{\dt}{\dy}\left(H^{n}_{3,i,j+\ud}-H^{n}_{3,i,j-\ud}\right)\\
D^{n+1}_{2,ij}=D^{n}_{2,ij}-\frac{\dt}{\dx}\left(H^{n}_{3,i+\ud,j}-H^{n}_{3,i-\ud,j}\right)\\
H^{n+1}_{3,ij}=H^{n}_{3,ij}- \frac{\dt}{\mu_0\dx}\left(E^{n}_{2,i+\ud,j}-E^{n}_{2,i-\ud,j}\right) +  \frac{\dt}{\mu_0\dy}\left(E^{n}_{1,i,j+\ud}-E^{n}_{1,i,j-\ud}\right).
\end{array}
\right.
\]
\begin{figure}
\centering
\includegraphics[width=0.3\linewidth,angle=0]{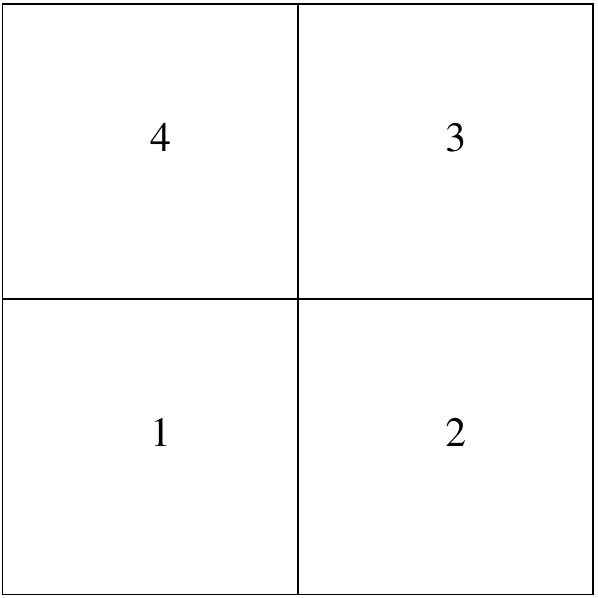}
\caption{Square partition.}
\label{square}
\end{figure}
As a first test we consider a square divided into four quadrants numbered as in Figure \ref{square}. On square $i$ we take $u^{(i)}$ as initial data with
\[
u^{(1)}=\left(
\begin{array}{c}
\delta_m \\ \delta_m \\ H_3^{(1)}
\end{array}
\right),\; u^{(2)}=\left(
\begin{array}{c}
\delta_m \\ \delta_p \\H_3^{(2)}
\end{array}
\right),\;  u^{(3)}=\left(
\begin{array}{c}
\delta_p \\ \delta_p \\ H_3^{(3)}
\end{array}
\right),\;  u^{(4)}=\left(
\begin{array}{c}
\delta_p \\ \delta_m \\ H_3^{(4)}
\end{array}
\right),
\]
in such a way that 
\begin{itemize}
\item $u^{(1)}$ and $u^{(2)}$ are connected by a Lax 5-shock, 
\item $u^{(4)}$ and $u^{(3)}$ are connected by a 2-rarefaction.
\end{itemize}
The computation is performed for a time $T=10$ femtoseconds, on a square $\Omega=]-cT,cT[^2$ with a $400 \times 400$ cartesian mesh, that is about 1320 time steps. 

Our data are divergence free but this property is not preserved by the scheme, even if for each interface the solver is divergence free. This 2D feature has already been reported in the context of MHD where it can lead to a complete blow up of the numerical solution. In our case, the results seem to be correct. The numerical ratio between $div (D)$ and $\nabla D$ is around $10^{-3}$:
\[ \int_\Omega |div D (x,y,t)|dx \, dy \leq 0.004   \int_\Omega |\nabla D (x,y,t)|dx \, dy \, . \]
This ratio remained in the same range for all the performed tests.

In figure \ref{quadr-D1D2}, the isovalues of $D_1$ and $B$ are shown. We do not represent those of $D_2$, they are in the same spirit. Near the boundaries, the problem is one-dimensional. When $y$ is fixed, we retrieve the 5-shock and the 2-rarefaction, see figure \ref{courbes1D}-left for a comparison with the exact solution near the top boundary. For fixed $x$, in view of figure \ref{quadr-D1D2}, one could think that also a single rarefaction and a single shock occur, but this is not true. The exact solution is composed at left by a 2-rarefaction and a (small) 5-shock, while at right we have a (small) 2-rarefaction and a 5-shock. Our two-dimensional computation retrieves all those waves, see figure \ref{courbes1D}-right for the right side. 
\begin{figure}
\centering
\includegraphics[width=0.49\linewidth,angle=0]{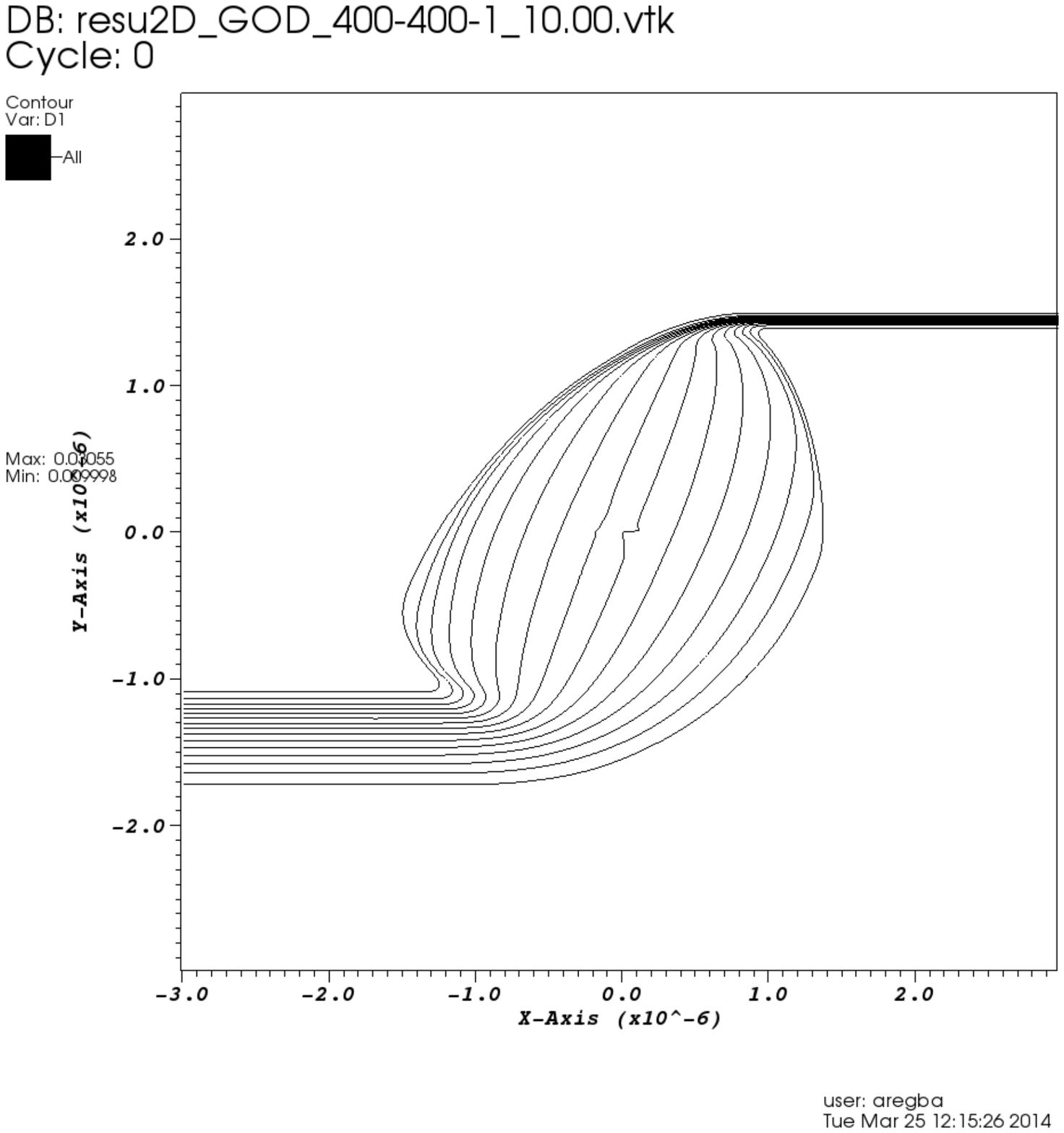}
\includegraphics[width=0.49\linewidth,angle=0]{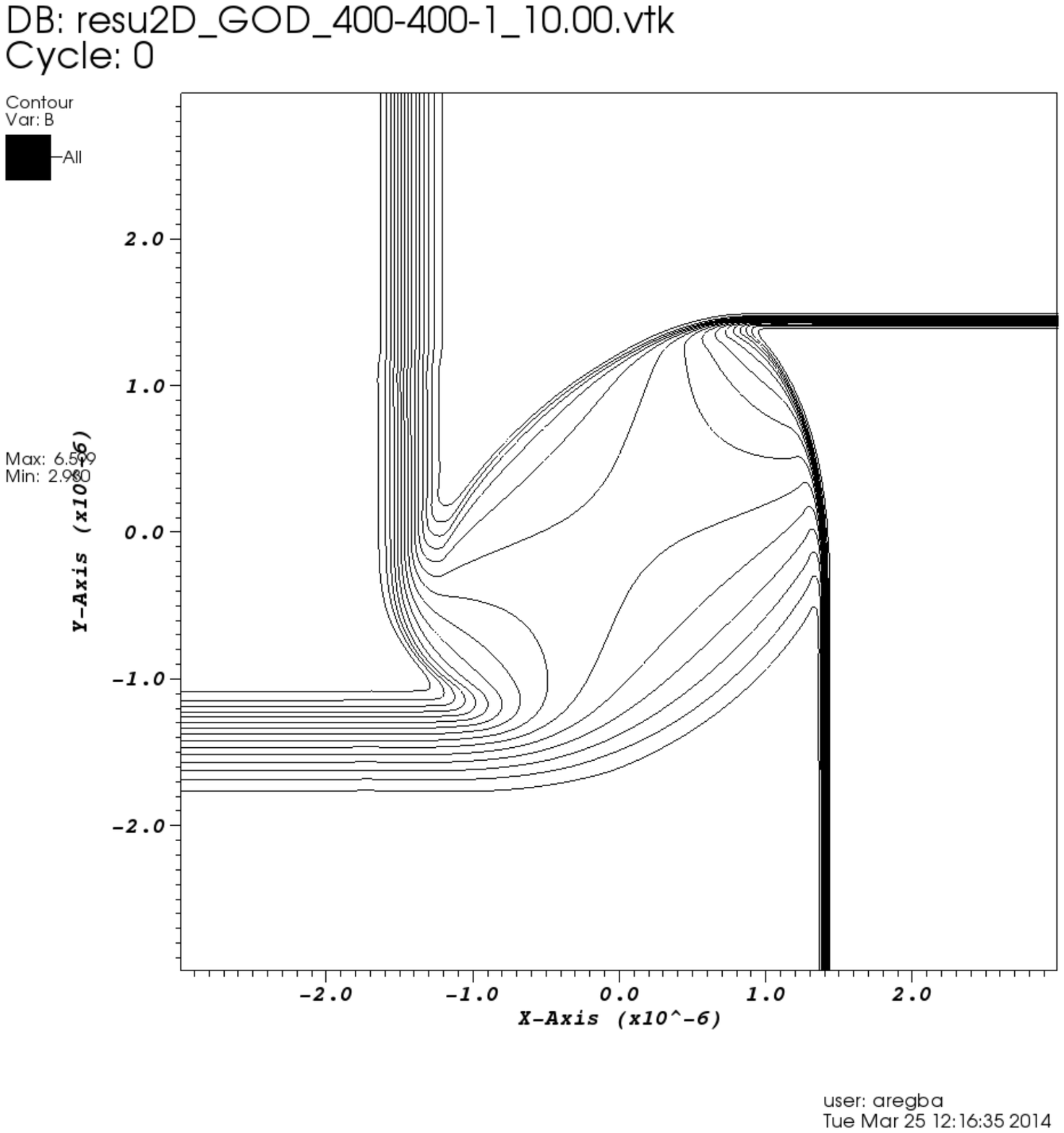}
\caption{2D Riemann problem: isovalues of $D_1$ (left) and ${B=\mu_0 H}$ (right). $D_2$ is not represented.}
\label{quadr-D1D2}
\end{figure}
\begin{figure}
\centering
\includegraphics[width=0.49\linewidth,angle=0]{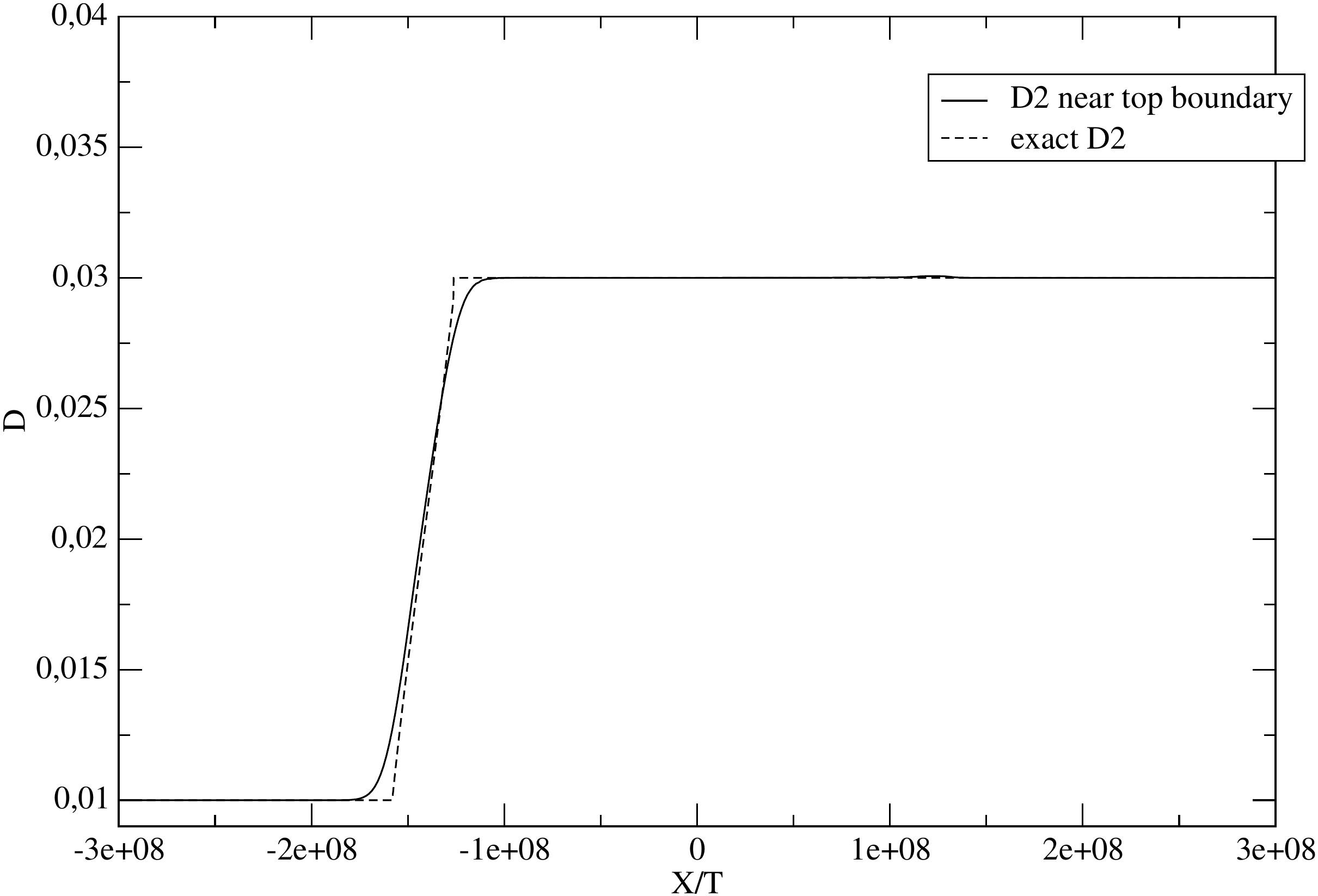}
\includegraphics[width=0.49\linewidth,angle=0]{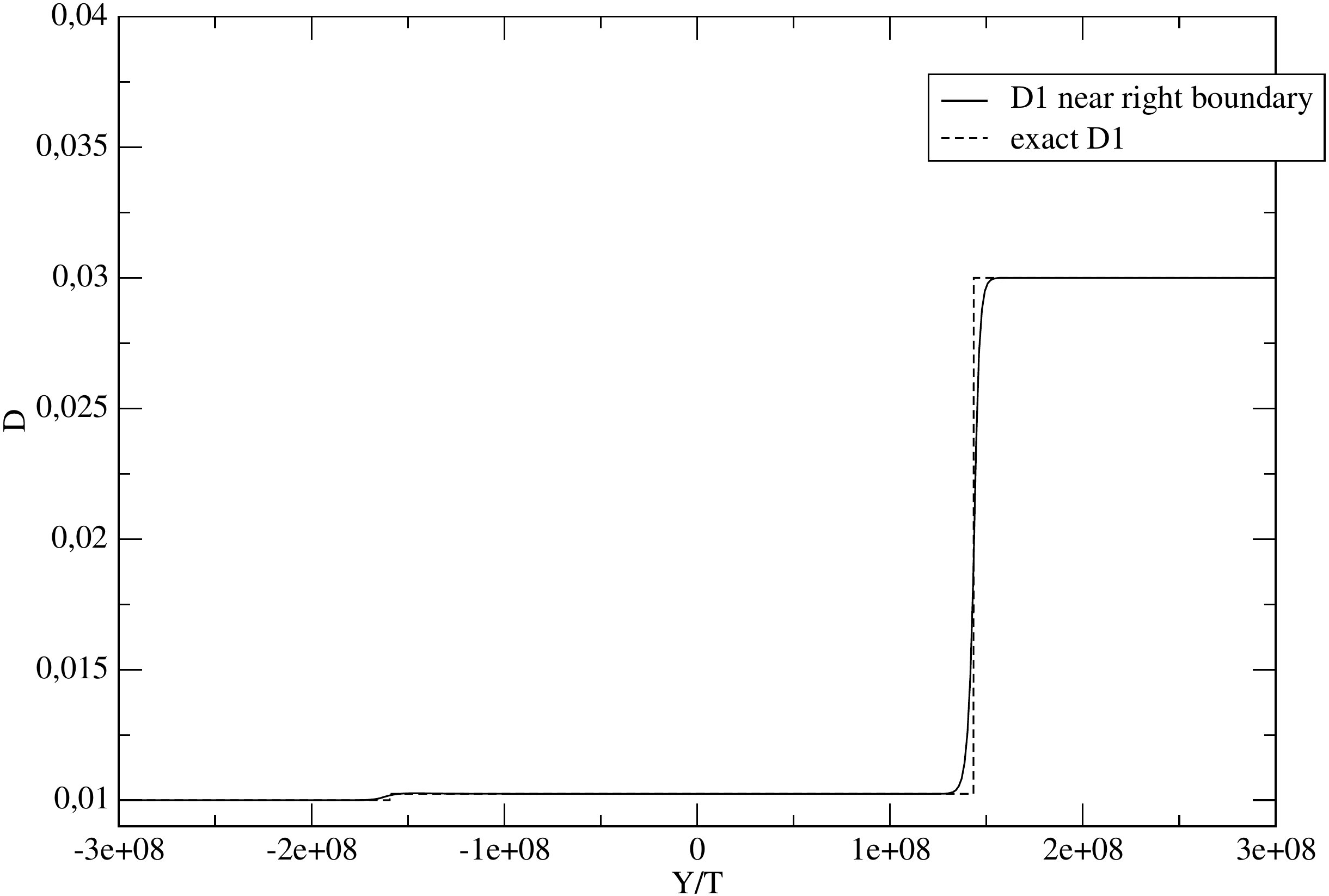}
\caption{2D Riemann problem: near the boundaries, the solution is 1D. Near the top, a single rarefaction for $(D_1,H_3)$; near the right boundary, a small rarefaction and a shock for $(D_2,H_3)$.}
\label{courbes1D}
\end{figure}
\\The second test is taken from an article by R.-W. Ziolkowski and J.B. Judkins \cite{ziol-jud}. An ultrashort pulsed optical beam is generated by a Gaussian waited magnetic field imposed at the left boundary of a rectangular domain $]0,X[ \times ]-Y,Y[$:
\[ H_3(0,y,t)=\mu_0^{-1} B_0 \left(1- \cos\left(\frac{2 \pi t}{T} \right)\right) \exp\left(- \frac{y^2}{w^2} \right) \; \; {\rm if} \; \; t \in [0,T], \quad 0 \; \; {\rm else}.
\]
The amplitude $B_0=6.087$ Tesla, the period $T=20$fs, the initial waist $w=10 \, \mu m$ are fixed. In the cited article, the response time $\tau$ of the material is not zero, and the authors solve Kerr-Debye equations (\ref{KD3D}) by a finite-difference time-domain (FDTD) method. They study self-focusing phenomena occuring in such cases. Those results have been retrieved in \cite{thhuyn} by a finite element method, and in \cite{kanso} with a finite volume scheme of which (\ref{fluxkdkd2D})-(\ref{schemkd2D}) is the relaxed Kerr limit. Also in \cite{kanso}, the Kerr limit $\tau=0$ has been investigated.  Here we compare the results obtained by Godunov scheme with those obtained in \cite{kanso}. 

The symmetry of the problem allows us to compute the field only in the domain $\Omega=]0,X[ \times ]0,Y[$. The self-focusing phenomenon can be detected by studying the time evolution of the maximal electric intensity $I(t)=\max_{\Omega}|E|^2$. After decreasing during a rather long time, by a strong interaction between the components of $E$, this quantity increases to reach a local maximum and then decreases again. As the creation of shocks dissipates energy, this maximum is less important when the response time is zero (Kerr model) than for Kerr-Debye model, but one can still observe it. In the present case, the local maximum is reached at time $t=109.2$ femtoseconds. In Figure \ref{xmcourbe2D}-left, we zoom on the time evolution of $I(t)$. We remark that the relaxation scheme (\ref{fluxkdkd2D})-(\ref{schemkd2D}) and Godunov scheme give the same result. We just represent the isolines of $|E|^2$ for Godunov scheme (Figure \ref{xmcourbe2D}-right), they are nearly the same as those obtained by the relaxation scheme. As can be seen on this Figure, we have both self-focusing and shock creation, which means that physical situations require an efficient computation of solutions with shocks.
\begin{figure}
\centering
\includegraphics[width=0.49\linewidth,angle=0]{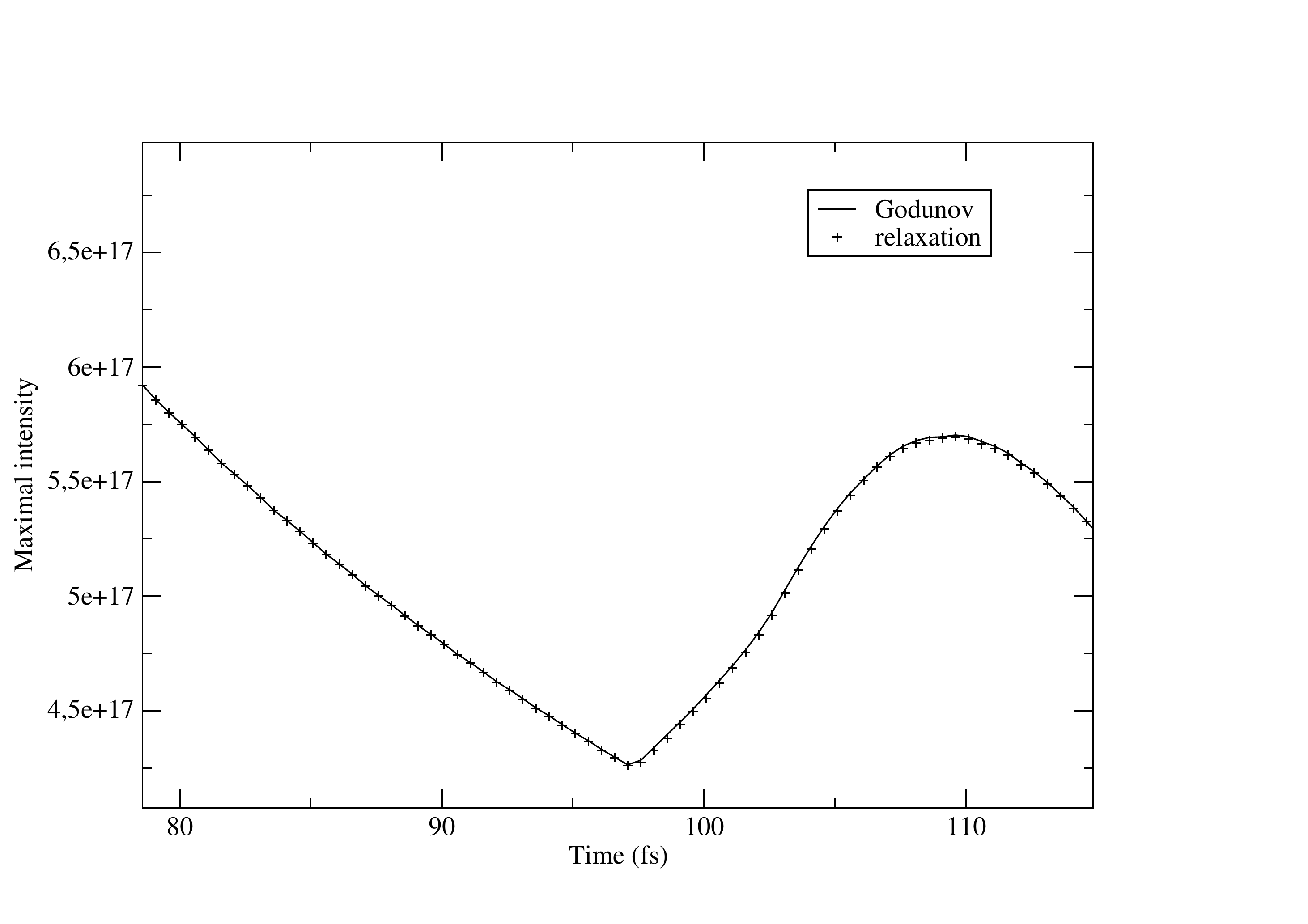}
\includegraphics[width=0.49\linewidth,angle=0]{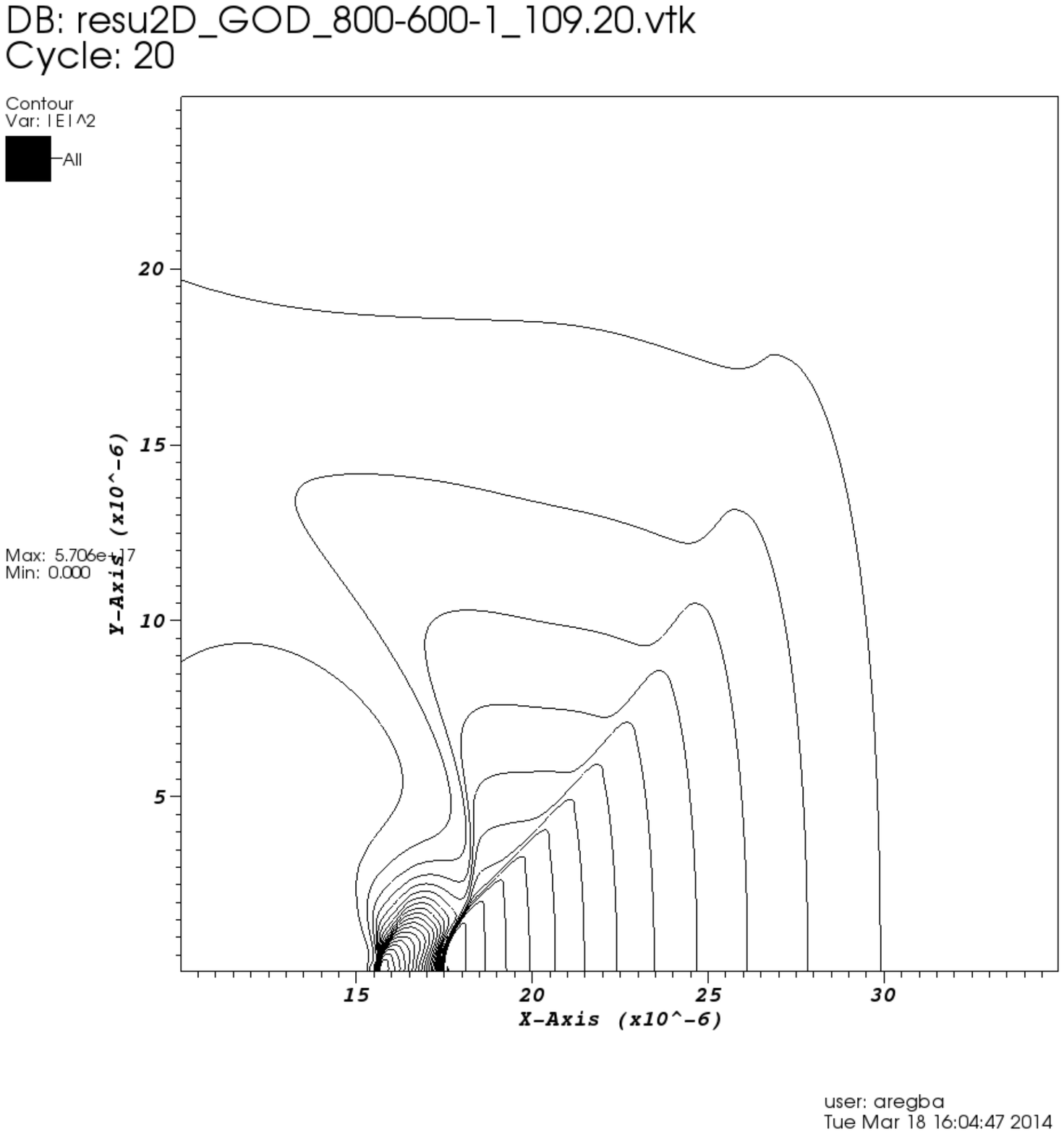}
\caption{2D case. Left: maximal intensity $|E|^2$ in the computational domain with respect to time. Right: maximal intensity at self-focusing time.}
\label{xmcourbe2D}
\end{figure}
\section{Conclusion}
We have been able to solve the Riemann problem for the $6 \times 6$ Kerr system. The multiplicity of the eigenvalues is not constant and the characteristic fields 2 and 5 are neither genuinely nonlinear, nor linearly degenerate. Nevertheless, in all cases, we can construct a unique Lax solution. For Transverse Magnetic data, this solution is Transverse Magnetic and does not coincide with Liu's solution of the reduced $3 \times 3$ system. This allows us to point out the non uniqueness of selfsimilar weak entropy solutions of Kerr system. 

From the numerical viewpoint, the $6 \times 6$ Lax solution has been implemented as an exact Riemann solver for Godunov scheme. Numerical experiments have been performed in one and two dimensions, including realistic physical cases. The results are very close to those obtained by the Kerr-Debye relaxation scheme coming from the non-zero response time model. In the particular case of coexistence of two entropy solutions, always the more dissipative Liu's solution is reached by our schemes. This may be due to numerical viscosity. In the physical case of an ultrashort pulsed optical beam, our results are consistent with those of the literature, and the well known self-focusing phenomenon is retrieved.

The results of Godunov and Kerr-Debye relaxation schemes are very close. The relaxation scheme is completely explicit. Due to the resolution of a nonlinear algebraic equation for each cell, Godunov scheme is more expensive in terms of CPU time but by construction, it allows us to compute weak solutions of Kerr system, even when they contain shocks. Along with the physically consistent relaxation scheme, we now have two reliable computational methods for Kerr system.
\section{Annex: Kerr-Debye relaxation scheme}\label{KDebye}
System (\ref{KD3D}) is hyperbolic with eigenvalues
\[ \mu_1=\mu_2=-\mu, \quad \mu_3=\mu_4=\mu_5=0, \quad \mu_6=\mu_7=\mu\]
with $\mu=\displaystyle \frac{c}{\sqrt{1+\chi}}$. Moreover, all the characteristic fields are linearly degenerate. These properties are useful to design a numerical approximation of (\ref{Kerr3D}), following the classical projection-transport technique. At every time step, one first projects the solution onto equilibrium by setting $\chi= \epsilon_r p^2(|D|)$, then the homogeneous system related to (\ref{KD3D}) is solved. As we use the finite volume method, we just have to know the solution of the Riemann problem to find the numerical fluxes at each interface, that is the approximation of $- \omega \pv H$ and $\mu_0^{-1}\omega \pv E$. The Riemann problem is easy to solve because we have only contact discontinuities here. Denoting $U_-$, $U_+$ the left and right initial states, $r_\pm=\sqrt{1+\ki_\pm}$, 

$E_\pm=\displaystyle \frac{D_\pm}{\epsilon_0(1+\chi_\pm)}$, we find (see \cite{kanso} for a proof in the tranverse magnetic case):
\[ 
\varphi(U_-,U_+,\om)=\left( \begin{array}{c}
-\omega \pv \displaystyle\frac{r_+H_- + r_-H_+}{r_++r_-}+\omega \pv \left( \omega \pv \frac{r_+r_-(E_+-E_-)}{c \mu_0(r_++r_-)} \right)\\ \\
\omega \pv \displaystyle\frac{r_+ E_+ + r_- E_-}{\mu_0(r_++r_-)} + c \, \om \pv \left(\om \pv \displaystyle\frac{H_+-H_-}{r_++r_-}\right)
\end{array}
\right)
\]
In one space dimension, we set $\overline{U_i}=(u^n_i,\epsilon_r p^2(|D^n_i|)$, $\om=(1,0,0)=e_1$, 
\begin{equation}\label{fluxkd1D} 
F^n_{i+\ud}=\varphi(\overline{U_i},\overline{U_{i+1}},e_1)
\end{equation}
and 
\begin{equation}\label{schemkd1D} 
u^{n+1}_i=u^n_i-\frac{\dt}{\dx}(F^n_{i+\ud}-F^n_{i-\ud}).
\end{equation}
Notice that this implies that $D^{n+1}_{i,1}=D^n_{i,1}$ and $H^{n+1}_{i,1}=H^n_{i,1}$ for all $i$ and $n$, which means that $\di D$ and $\di H$ are constant along the computation. 

In two space dimensions, for a cartesian mesh, we set
\begin{equation}\label{fluxkdkd2D} 
F^n_{i+\ud,j}=\varphi(\overline{U_{i,j}},\overline{U_{i+1,j}},e_1), \quad G^n_{i,j+\ud}=\varphi(\overline{U_{i,j}},\overline{U_{i,j+1}},e_2)
\end{equation}
and the scheme reads as 
\begin{equation}\label{schemkd2D} 
u^{n+1}_{i,j}=u^n_{i,j}-\frac{\dt}{\dx}(F^n_{i+\ud,j}-F^n_{i-\ud,j})-\frac{\dt}{\dy}(G^n_{i,j+\ud}-G^n_{i,j-\ud}).
\end{equation}


\begin{thebibliography}{00}

\bibitem{dabh-CMS} D.~Aregba-Driollet and B.~Hanouzet.
\newblock {Kerr-Debye relaxation shock profiles for Kerr equations.}
\newblock{\em Commun. Math. Sci.} 9 (2011), 1-31.

\bibitem{bour-fr} A. Bourgeade and E. Freysz.
\newblock {Computational modeling of second-harmonic generation by solution of full-wave vector Maxwell equations.}
\newblock{\em J. Opt. Soc. Am. B} 17 (2000), 226-234.

\bibitem{bour-saut} A. Bourgeade and O. Saut.
\newblock {Numerical methods for the bidimensional Maxwell-Bloch equations in nonlinear crystals.} 
\newblock{\em J. Comput. Phys.} 213 (2006), no. 2, 823–843.

\bibitem{bressan} A.~Bressan.
\newblock {Hyperbolic systems of conservation laws. The
  one-dimensional Cauchy problem}. 
\newblock Oxford Lecture Series in Mathematics and its Applications,
20. Oxford University Press, Oxford, 2000. 


\bibitem{c-h2}G.~Carbou and B.~Hanouzet.
\newblock {Relaxation approximation of Kerr Model for the three
  dimensional initial-boundary value problem.}
\newblock {\em J. Hyperbolic Differ. Equ.}  6  (2009),  no. 3, 577-614.


\bibitem{cll} 
G.Q. Chen, C.D. Levermore, T.P. Liu,
\newblock {Hyperbolic Conservation Laws with Stiff Relaxation Terms and Entropy.}
\newblock {\em Comm. Pure Appl. Math. 47 (1995), 787--830.}

\bibitem{de-la-B} A. de La Bourdonnaye,
\newblock {High-order scheme for a nonlinear {M}axwell system modelling
              {K}err effect},
\newblock{\em J. Comput. Phys.}, 160 (2000), 500--521.


\bibitem{hh}B.~Hanouzet and P.~Huynh.
\newblock {Approximation par relaxation d'un syst\`eme de Maxwell non lin\'eaire.}
\newblock {\em C. R. Acad. Sci. Paris S\'er. I Math.} 330 (2000), no. 3, 193--198.

\bibitem{thhuyn}
    \newblock P.~Huynh,
    \newblock  ``Etudes th\'eorique et num\'erique de mod\`eles de Kerr,"
    \newblock  Ph.D thesis, Universit\'e Bordeaux 1, 1999.

\bibitem{kanso}
    \newblock M. Kanso,
    \newblock  ``Sur le mod\`ele de Kerr-Debye pour la propagation des
    ondes \'electromagn\'etiques,"
    \newblock  Ph.D thesis, Universit\'e Bordeaux 1, 2012.

\bibitem{Liu74} T.-P.~Liu.
\newblock {The {R}iemann problem for general {$2\times 2$}
  conservation laws.}
\newblock{\em Trans. Amer. Math. Soc.} 199 (1974), 89--112.

\bibitem{Liu76} T.-P.~Liu.
\newblock {The entropy condition and the admissibility of shocks.}
\newblock{\em J. Math. Anal. Appl.}  53  (1976), no. 1, 78--88.


\bibitem{revnat} R.~Natalini,
\newblock {Recent results on hyperbolic relaxation problems},
\newblock {in Analysis of systems of conservation laws (Aachen, 1997)}, 128--198, 
\newblock {\em Chapman  Hall/CRC Monogr. Surv. Pure Appl. Math.}, 1999.

\bibitem{saut} O. Saut.
\newblock {Computational modeling of ultrashort powerful laser pulses in a nonlinear crystal}.
\newblock {\em Journal of Computational Physics} 197 (2004), 624–646.

\bibitem{serre} D. Serre.
\newblock {Syst\`emes de lois de conservation I. and II.} 
\newblock Diderot, Paris, 1996. Cambridge
University Press, Cambridge, 1999 for the english translation (Systems
of conservation laws I. and II.)

\bibitem{shen} Y.-R.~Shen.
\newblock {The Principles of Nonlinear Optics}.
\newblock Wiley Interscience, 1994.

\bibitem {Tz} A.E. Tzavaras.
\newblock {Materials with internal variables and relaxation to conservation laws}.
\newblock {\em Arch. Ration. Mech. Anal.} 146 (1999), no. 2, 129--155. 

\bibitem{wen} B~Wendroff.
\newblock {The Riemann problem for materials with nonconvex equations of
   state. I. Isentropic flow}.
\newblock {\em J. Math. Anal. Appl.} 38, (1972), 454--466.

\bibitem{yee} K.S. Yee.
\newblock {Numerical solution of initial boundary value problems involving Maxwell's equations in isotropic media.}
\newblock {\em  IEEE Trans. Antennas Propag.} AP-14, (1966) 302–307.

\bibitem{ziol} R.-W.~Ziolkowski.
\newblock {The incorporation of microscopic material models into FDTD
approach for ultrafast optical pulses simulations. }
\newblock {\em IEEE Transactions on Antennas and Propagation}
45(3):375-391, 1997.

\bibitem{ziol-jud} R.-W.~Ziolkowski and J.B. Judkins.
\newblock {Full-wave vector Maxwell equation modeling of the self-focusing of ultrashort optical pulses in a nonlinear Kerr medium exhibiting a finite response time.}
\newblock {\em J. Opt. Soc. Am. B,} 10 (1993), no. 2, 186--198. 
\end{thebibliography}
\end{document}